\newtheorem{theorem}{Theorem}[section]
\newtheorem{proposition}[theorem]{Proposition}
\newtheorem{lemma}[theorem]{Lemma}
\newtheorem{definition}[theorem]{Definition}
\theoremstyle{remark}
\newtheorem{remark}[theorem]{Remark}
\numberwithin{equation}{section}
\begin{document}

\title[Macdonald Polynomials with Unitary Parameters]
{Orthogonality of Macdonald Polynomials\\ with Unitary Parameters}

\author{J.F.  van Diejen}

\author{E. Emsiz}

\address{
Facultad de Matem\'aticas, Pontificia Universidad Cat\'olica de Chile,
Casilla 306, Correo 22, Santiago, Chile}
\email{diejen@mat.puc.cl, eemsiz@mat.puc.cl}

\subjclass[2000]{Primary: 33D52; Secondary: 11C08, 20C08}
\keywords{orthogonal polynomials, Macdonald polynomials, root systems}

\thanks{Work was supported in part by the {\em Fondo Nacional de Desarrollo
Cient\'{\i}fico y Tecnol\'ogico (FONDECYT)} Grants \# 1130226 and  \# 11100315,
and by the {\em Anillo ACT56 `Reticulados y Simetr\'{\i}as'}
financed by the  {\em Comisi\'on Nacional de Investigaci\'on
Cient\'{\i}fica y Tecnol\'ogica (CONICYT)}}

\date{December 2012}

\begin{abstract}
For any admissible pair of irreducible reduced crystallographic root systems, we present discrete orthogonality relations for a finite-dimensional system of Macdonald polynomials with parameters on the unit circle subject to a truncation relation.
\end{abstract}

\maketitle

\section{Introduction}
In \cite{die-vin:quantum} a finite-dimensional system of discrete orthogonality relations was found for the
Macdonald polynomials \cite{mac:symmetric} with parameters of the form
$t=q^\text{g}$ and $q=e^{\frac{2\pi i}{(n+1)\text{g}+c}}$, where $n+1$ denotes the number of variables, $\text{g}$ is a positive real parameter, and $c$ is a positive integer (so both $t$ and $q$ lie on the unit circle and satisfy the truncation relation $t^{n+1}q^c=1$).
For $\text{g}$ integral Macdonald's parameters $q$ and $t$ become roots of unity; the discrete orthogonality relations of \cite{die-vin:quantum} specialize in this situation to those considered in \cite[\text{Sec}.~5]{kir:inner}. In particular, when $\text{g}=1$ elementary orthogonality relations for systems of periodic Schur polynomials are recovered, cf. \cite[\S 13.8]{kac:infinite}, \cite[\text{Sec.}~6]{kir:inner}, \cite[\text{Sec}.~4.2]{die:finite-dimensional}, and
\cite[\text{Sec}.~6.2]{kor-stro:slnk}.

The purpose of the present paper is to generalize the finite-dimensional discrete orthogonality relations of \cite{die-vin:quantum} to Macdonald polynomials with unitary parameters associated with arbitrary admissible pairs of irreducible reduced crystallographic root systems
\cite{mac:orthogonal} in the spirit of
\cite{die-sto:multivariable}, where the case of nonreduced root systems was considered.
We thus arrive at a parameter deformation interpolating between discrete orthogonality relations for such Macdonald polynomials with parameter values at roots of unity
\cite[Sec.~5]{che:macdonalds}, containing  as a special case elementary orthogonality relations for systems of periodic Weyl characters, cf.
\cite[\S 13.8]{kac:infinite}, \cite[\text{Sec.}~6]{kir:inner}, \cite[\text{Sec}.~5.3]{hri-pat:discretization} and \cite[\text{Sec}.~8.4]{die-ems:discrete}.

When the rank of the reduced root system is {\em one}, the orthogonality  considered here reduces to a finite-dimensional discrete orthogonality relation for
the $q$-ultraspherical polynomials \cite[\text{Sec}.~3C2]{rui:integrable}, \cite[\text{Sec}.~5.2]{die-vin:quantum} that arises as a parameter specialization of the celebrated orthogonality for the $q$-Racah polynomials found by Askey and Wilson \cite{koe-les-saw:hypergeometric}. The full $q$-Racah orthogonality corresponds from this perspective to the nonreduced rank-one situation \cite{die-sto:multivariable}. A very different multivariate analog of the  $q$-Racah polynomials was studied recently by Gasper, Rahman and Iliev \cite{gas-rah:some,ili:bispectral}.

The paper is organized as follows.
Section \ref{sec2} sets up notation and recalls
the definition of the Macdonald polynomials diagonalizing
Macdonald's difference operators.
In Section \ref{sec3} we formulate our finite-dimensional system of discrete orthogonality relations for the Macdonald polynomials with unitary parameters (subject to a truncation relation). The remainder of the paper is devoted to the proof of these orthogonality relations.
Specifically, we infer the orthogonality of the Macdonald polynomials with unitary parameters exploiting explicit formulas for the Macdonald difference operators
from \cite{die-ems:generalized} allowing to confirm the self-adjointness and the nondegeneracy of the spectrum in the present setting (Section \ref{sec4}).
Next, we rely on the duality symmetry
\cite{mac:affine,che:double,hai:cherednik,cha:algebraic}
to compute the norms of the Macdonald polynomials via Macdonald's Pieri-type recurrence relations associated with the (quasi-)minuscule weights \cite{mac:affine,las:inverse} (Section \ref{sec5}).
The total mass of the weight function is expressed compactly in product form by applying a finitely truncated version \cite{die:certain,maz:finite} of a basic hypergeometric summation formula associated with root systems due to Aomoto, Ito and Macdonald \cite{aom:elliptic,ito:symmetry,mac:formal}.
Some technical issues concerning the proof of the nondegeneracy of the eigenvalues of the Macdonald difference operators for exceptional root systems  are relegated to an appendix at the end of the paper.

\section{Preliminaries}\label{sec2}
In this section the definitions of the Macdonald polynomials and the Macdonald difference operators are recalled briefly.
For more detailed discussions with proofs and further background material the reader is referred to the standard texts \cite{mac:orthogonal,mac:affine,kir:lectures,che:double,hai:cherednik}.
Throughout familiarity with the basic properties of root systems and their Weyl groups \cite{bou:groupes} will be assumed.

\subsection{Macdonald polynomials}
Let $E$ be a real finite-dimensional Euclidean vector space with inner product $\langle \cdot ,\cdot \rangle$ spanned by an irreducible reduced crystallographic root system $R$.
We write $Q$, $P$, and $W$, for the root lattice, the weight lattice, and the Weyl group associated with $R$. The semigroup of the root lattice generated by a (fixed) choice of positive roots $R^+$
is denoted by $Q^+$ whereas  $P^+$  stands for the corresponding cone of dominant weights.
The group algebra
$\mathbb{C}[P]$ is spanned by formal exponentials
$e^\lambda$, $\lambda\in P$ characterized by the relations $e^0=1$,
$e^\lambda e^\mu=e^{\lambda +\mu}$. The Weyl group acts linearly on $\mathbb{C}[P]$
via $we^\lambda :=e^{w\lambda}$, $w\in W$, and the $W$-invariant subalgebra $\mathbb{C}[P]^W$ is spanned by the basis of symmetric monomials $m_\lambda := \sum_{\mu \in W\lambda} e^\mu$, $\lambda\in P^+$ (where the sum is meant over the $W$-orbit of $\lambda$). This monomial basis inherits a partial order from the dominance order on
the cone of dominant weights:
\begin{equation}
\mu\leq \lambda\quad \text{iff}\quad \lambda-\mu\in Q^+\qquad (\lambda ,\mu\in P^+).
\end{equation}

The dual root system $R^\vee:=\{\alpha^\vee\mid \alpha\in R\}$ and its
positive subsystem $R^{\vee ,+}$ are obtained from $R$ and $R^+$ by applying the involution
\begin{equation}\label{check-inv}
x\mapsto x^\vee:=2x/\|x\|^2\qquad (x\in E\setminus\{0\} ) ,
\end{equation}
where $\|x\|:=\langle x ,x \rangle^{1/2}$.
Following Macdonald's terminology, we refer to a tuple $(R,\hat{R})$
with $\hat{R}$ being equal either to $R$ or $R^\vee$ as an {\em admissible pair} of root systems \cite{mac:orthogonal}.
The assignment
$a\mapsto\hat{a}$ for $a\in R\cup\hat{R}$, where $\hat{a}:=a$ if $\hat{R}=R$ and $\hat{a}:=a^\vee$ if $\hat{R}=R^\vee$, defines an involution on $R\cup\hat{R}$ swapping the roots of $R$ and $\hat{R}$.
We write $\hat{Q}$ and $\hat{P}$ for the root lattice and the weight lattice of $\hat{R}$, and
more generally: dual objects associated with the admissible pair $(\hat{R},R)$ (i.e. with the role of $R$ and $\hat{R}$ interchanged) will be endowed with a superscript hat.

For a formal series $f=\sum_{\lambda\in P} f_\lambda e^\lambda$, $f_\lambda\in \mathbb{C}$, we define $\int f := f_0$ and $\bar{f}:=\sum_{\lambda\in P}\bar{f}_\lambda e^{-\lambda}$ (with $\bar{f}_\lambda$ meaning the complex conjugate of $f_\lambda$).
The Macdonald inner product on $\mathbb{C}[P]$  is then given by \cite{mac:orthogonal,mac:affine}
\begin{subequations}
\begin{equation}\label{ipa}
\langle f,g\rangle_\delta := |W|^{-1} \int  f \bar{g}\, \delta \qquad (f,g\in\mathbb{C}[P]) ,
\end{equation}
where $|W|$ stands for the order of $W$ and
\begin{equation}\label{ipb}
\delta:=\delta_{{}^+}\bar{\delta}_{{}^+}, \qquad \delta_{{}^+} := \prod_{\alpha \in R^+} \frac{( e^\alpha;q_\alpha)_\infty}{( t_\alpha e^\alpha;q_\alpha)_\infty}
\end{equation}
(employing standard notation for the
$q$-shifted factorial $(a;q)_m :=\prod_{k=0}^{m-1} (1-aq^k)$ with $m$ nonnegative integral or $\infty$). Here $q$ is a parameter taking values in
$(0,1)$ and
\begin{equation}
q_a:=q^{u_a},\qquad
t_a:=q_a^{\text{g}_a} \qquad (a\in R\cup\hat{R}),
\end{equation}
\end{subequations}
where $u_a:=\|\hat{a}\|/\|a^\vee \|$ (so $u_{\hat{a}}=u_a$) and
$\text{g}:R\cup\hat{R}\to (0,\infty)$ denotes a root multiplicative function such that $\text{g}_{wa}=\text{g}_a$ and $\text{g}_{\hat{a}}=\text{g}_a$ (for all
$w\in W$ and $a\in R\cup\hat{R}$). We think of this function as a
(pair of) positive parameter(s) attached to the
$(W\times\mathbb{Z}_2)$-orbits of $R\cup\hat{R}$ (where the  $\mathbb{Z}_2$-action corresponds to the `hat'-involution).

\begin{definition}[Macdonald Polynomials \cite{mac:orthogonal,mac:affine}]
For $\lambda\in P^+$, the {\em Macdonald polynomial} is defined as the unique element in $\mathbb{C}[P]^W$ of the form
\begin{subequations}
\begin{equation}\label{mp-d1}
p_\lambda = m_\lambda+\sum_{\mu <  \lambda } c_{\lambda \mu} (q,t)\, m_{\mu}
\end{equation}
with $c_{\lambda \mu} (q,t)\in\mathbb{C}$ such that
  \begin{equation}\label{mp-d2}
  \langle p_\lambda , m_{\mu } \rangle_\delta =0 \quad \text{for all} \ \mu < \lambda .
  \end{equation}
 \end{subequations}
\end{definition}

The elements of the group algebra will be interpreted as functions on $E$ through the evaluation homomorphism
$e^\lambda (x):= q^{\langle \lambda ,x\rangle}$,  $x\in E$.
Three remarkable fundamental properties of Macdonald's polynomials are the {\em orthogonality relations}
\cite[\text{Eqs.}~(5.3.4),(5.8.17)]{mac:affine}
\begin{subequations}
\begin{equation}\label{macdonald-orthogonality}
\langle p_\lambda , p_\mu\rangle_\delta  =
\begin{cases}
0&\text{if}\ \lambda \neq \mu \\
\frac{\hat{\delta}_{{}^+}(\rho_{\text{g}}+\lambda)}{\hat{\delta}_{{}^-}(\rho_{\text{g}}+\lambda ) }&\text{if}\ \lambda =\mu
\end{cases}
 ,
\end{equation}
the {\em principal specialization formula}  \cite[\text{Eq.}~(5.3.12)]{mac:affine})
\begin{equation}\label{specialization-formula}
\begin{split}
p_\lambda (\hat{\rho}_{\text{g}})&= \frac{\hat{\delta}_{{}^+}(\rho_{\text{g}}+\lambda)}{\hat{\delta}_{{}^+}(\rho_{\text{g}})e^\lambda (\hat{\rho}_{\text{g}})}
\\
 &=
\prod_{\alpha\in R^+}
t_\alpha^{-\langle\lambda,\alpha^\vee\rangle/2}
\frac{(t_\alpha q_\alpha^{\langle  \rho_{\text{g}},\alpha^\vee \rangle };   q_\alpha)_{\langle\lambda, \alpha^\vee \rangle}}{( q_\alpha^{\langle  \rho_{\text{g}},\alpha^\vee \rangle};q_\alpha)_{\langle \lambda, \alpha^\vee\rangle}} ,
 \end{split}
\end{equation}
and the
{\em duality symmetry} \cite[\text{Eq.}~(5.3.6)]{mac:affine} for the normalized Macdonald polynomials
$P_\lambda := p_\lambda/p_\lambda (\hat{\rho}_{\text{g}}) $:
\begin{equation}\label{duality-symmetry}
P_\lambda (\hat{\rho}_{\text{g}}+\mu)= \hat{P}_\mu (\rho_{\text{g}}+\lambda) \qquad
  (\lambda\in P^+,\mu\in \hat{P}^+),
\end{equation}
\end{subequations}
where we have employed the additional notation
\begin{equation*}
 \rho_{\text{g}}:=\frac{1}{2}\sum_{\alpha\in R^+} \text{g}_\alpha \alpha,\qquad
\delta_{{}^-} := \prod_{\alpha \in R^+} \frac{( t_\alpha^{-1}q_\alpha e^\alpha;q_\alpha)_\infty}{( q_\alpha e^\alpha;q_\alpha)_\infty} .
\end{equation*}

\subsection{Macdonald difference operators}
Macdonald's polynomials are joint eigenfunctions of an algebra of commuting difference operators that is isomorphic to $\mathbb{C}[\hat{P}]^W$, cf. \cite[\text{Eqs.}~(4.4.12), (5.3.3)]{mac:affine}. Explicit formulas for the difference operators associated with the basis elements $\hat{m}_\omega$ with $\omega\in\hat{P}^+\setminus \{0\}$ {\em minuscule} (i.e. $\langle \omega,\alpha^\vee\rangle\leq 1$ for all $\alpha\in \hat{R}^+$) or {\em quasi-minuscule} (i.e. $\langle \omega,\alpha^\vee\rangle\leq 1$ for all $\alpha\in \hat{R}^+\setminus\{\omega\}$ and $\omega$ is not minuscule)  were presented in
\cite[\text{Secs.}~5, 6]{mac:orthogonal}. In \cite[\text{Sec.}~3]{die-ems:generalized}
more general explicit formulas for the Macdonald difference operators  can be found corresponding to the subspace of
$\mathbb{C}[\hat{P}]^W$ spanned by the monomials $\hat{m}_\omega$ with $\omega\in \hat{P}^+$ {\em small} (viz.  $\langle \omega,\alpha^\vee\rangle\leq 2$ for all $\alpha\in \hat{R}^+$). To formulate the latter difference operators some further notation is needed.
Given $\lambda\in P^+$, let us denote the saturated set in $P$ cut out by the convex hull of $W\lambda$ by
\begin{equation}
P(\lambda):=\bigcup_{\mu\in P^+,\, \mu\leq \lambda} W\mu .
\end{equation}
The stabilizer of $x\in E$ in $W$ and the corresponding parabolic subsystem of $R$
are given by $W_x:=\{ w\in W\mid wx=x\}$ and $R_x:=\{\alpha\in R\mid \langle x,\alpha\rangle=0\}$ (with $R_x^+:=R_x\cap R^+$). Finally, for $\lambda\in P$ we write $w_\lambda$ for the unique shortest Weyl group element mapping $\lambda$ into the dominant cone $P^+$.

For $\omega\in \hat{P}^+$ small in the sense above, the Macdonald difference operator $\mathcal{D}_\omega$ from \cite{die-ems:generalized}
acts on meromorphic functions $f:E\to\mathbb{C}$ as
\begin{subequations}
\begin{equation}\label{D}
(\mathcal{D}_\omega f)(x)=\sum_{\substack{\nu\in  \hat{P}(\omega)\\ \eta\in W_\nu(w_\nu^{-1}\omega)   }}
V_\nu(x)U_{\nu,\eta}(x)(T_\nu f)(x) ,
\end{equation}
where $(T_v f)(x):=f(x+\nu)$ and the coefficients $V_\nu$ and $U_{\nu,\eta}$ are of the form
\begin{equation}\label{V}
  V_{\nu}(x)
 =\prod_{\substack{\alpha\in \hat{R}\\     \langle \nu,\alpha^\vee\rangle> 0 }}
\frac{\sin\kappa_\alpha(\langle x,\alpha^\vee \rangle+\text{g}_\alpha)}{\sin\kappa_\alpha\langle x,\alpha^\vee \rangle}
\prod_{\substack{\alpha\in \hat{R}\\     \langle \nu,\alpha^\vee\rangle=2 }}
\frac{\sin\kappa_\alpha(\langle x,\alpha^\vee\rangle+1+\text{g}_\alpha)}{\sin\kappa_\alpha(\langle x,\alpha^\vee \rangle+1)}
  \end{equation}
and
  \begin{equation}\label{U}
U_{\nu,\eta}(x)
=\prod_{\substack{\alpha\in \hat{R}_\nu\\     \langle \eta,\alpha^\vee\rangle> 0 }}
\frac{\sin\kappa_\alpha(\langle x,\alpha^\vee \rangle+\text{g}_\alpha)}{\sin\kappa_\alpha\langle x,\alpha^\vee \rangle}
\prod_{\substack{\alpha\in \hat{R}_\nu\\     \langle \eta,\alpha^\vee\rangle=2 }}
\frac{\sin\kappa_\alpha(\langle x,\alpha^\vee \rangle+1-\text{g}_\alpha)}{\sin\kappa_\alpha(\langle x,\alpha^\vee \rangle+1)}.
  \end{equation}
\end{subequations}
Here $\kappa$ is a (for now positive imaginary) parameter that is related to Macdonald's parameter $q$ via
\begin{subequations}
\begin{equation}\label{q}
q=\exp (2i\kappa)
\end{equation}
and $\kappa_a:=\kappa u_a$, so
\begin{equation}\label{qta}
q_a=\exp (2i\kappa_a)\quad\text{and}\quad t_a=\exp (2i\kappa_a \text{g}_a)\qquad (a\in R\cup\hat{R}).
\end{equation}
\end{subequations}

One has that \cite[\text{Thm.}~3.1]{die-ems:generalized}
\begin{subequations}
\begin{equation}\label{EV:omega}
\mathcal{D}_\omega p_\lambda=E_\omega(\rho_{\text{g}}+\lambda )p_\lambda ,
  \end{equation}
where
\begin{equation}
\label{spectrum}
\begin{split}
E_\omega &:=\hat{m}_\omega+\sum_{\mu\in\hat{P}^+,\, \mu<\omega}  \epsilon_{\omega ,\mu} \hat{m}_\mu\in\mathbb{C}[\hat{P}] ,\\
\epsilon_{\omega ,\mu} &:= \sum_{\eta\in W_\mu\omega}
\prod_{\substack{\alpha\in \hat{R}_\mu^+ \\  |\langle\eta ,\alpha^\vee\rangle |=1}} t_\alpha^{\langle\eta,\alpha^\vee\rangle} .
\end{split}
\end{equation}
\end{subequations}

When $\omega$ is minuscule $\hat{P}(\omega)=W\omega$ and
\begin{subequations}
\begin{equation}\label{D:m}
\mathcal{D}_\omega=\sum_{\nu\in  W\omega} V_\nu(x)T_\nu, \qquad E_\omega=\hat{m}_\omega ,
\end{equation}
whereas when $\omega$ is quasi-minuscule $\hat{P}(\omega)=W\omega\cup \{0\}$ and
\begin{equation}\label{D:qm}
\mathcal{D}_\omega=\sum_{\nu\in  W\omega} (U_{0,\nu}(x)+V_\nu(x)T_\nu ) ,\qquad E_\omega =\hat{m}_\omega +\epsilon_{\omega ,0} .
\end{equation}
\end{subequations}
The difference equation \eqref{EV:omega} reduces in these two situations to the explicit eigenvalue equations for the Macdonald polynomials stemming from \cite[\text{Sec}.~5]{mac:orthogonal} and \cite[\text{Sec}.~6]{mac:orthogonal}, respectively.

\subsection{Generic complex parameters}
From a Taylor expansion of $\hat{m}_\omega (\rho_{\text{g}}+\lambda)$ in $\kappa$:
\begin{align*}
\hat{m}_\omega (\rho_{\text{g}}+\lambda) &= \sum_{\nu\in W\omega} \exp({2i\kappa \langle \nu,\rho_{\text{g}}+\lambda \rangle})\\
&= |W\omega|-2\kappa^2\sum_{\nu\in W\omega} \langle \nu,\rho_{\text{g}}+\lambda \rangle^2 + O(\kappa^3) \\
&= |W\omega| \left( 1-\frac{2\kappa^2}{n}\|\omega\|^2 \|\rho_{\text{g}}+\lambda \|^2 \right) + O(\kappa^3),
\end{align*}
one reads-off that $\hat{m}_\omega (\rho_{\text{g}}+\mu)\neq \hat{m}_\omega (\rho_{\text{g}}+\lambda)$ as an analytic expression in the parameters  $\kappa$ and $\text{g}$ when
$\mu<\lambda$ (because the latter inequality implies that $\|\rho_{\text{g}}+\mu\|^2<\|\rho_{\text{g}}+\lambda \|^2$ for $\text{g}>0$). Here $n$ refers to the rank of the root system ($:=\dim E$) and $|W\omega|$ stands for the order of the orbit $W\omega$.
The upshot is that---for generic values in our parameter domain---the Macdonald polynomial $p_\lambda$ can be alternatively characterized as the unique polynomial of the form in Eq. \eqref{mp-d1} satisfying the eigenvalue equation \eqref{EV:omega} for the Macdonald operator $\mathcal{D}_\omega$ with $\omega$ (quasi-)minuscule, i.e. one then has explicitly
(cf. \cite[\text{Sec.}~4]{mac:orthogonal}):
\begin{equation}\label{P:explicit}
p_\lambda=
\biggl(
\prod_{\substack{\mu\in P^+\\  \mu < \lambda}} \frac{\mathcal{D}_\omega-E_\omega (\rho_{\text{g}}+\mu)}{E_\omega (\rho_{\text{g}}+\lambda)-E_\omega (\rho_{\text{g}}+\mu)}
\biggr)
m_\lambda
\end{equation}
(where  $\omega$ is assumed to be (quasi-)minuscule).

It is immediate from \eqref{P:explicit} and the explicit formulas for $\mathcal{D}_\omega$ and $E_\omega$ in \eqref{D:m} and \eqref{D:qm} that the Macdonald polynomial $p_\lambda$ is meromorphic in the parameters $\kappa$ and $\text{g}$. Hence, the Macdonald polynomial $p_\lambda$ extends meromorphically in these parameters to an eigenpolynomial of the form in \eqref{mp-d1} solving the eigenvalue equation \eqref{EV:omega}, \eqref{spectrum} for generic complex parameter values (and $\omega\in \hat{P}^+$ small).

\section{Main result: orthogonality relations for unitary parameters}\label{sec3}
 We exploit the  above meromorphy of the Macdonald polynomials in the parameters
 to continue the parameter $\kappa$ analytically from the imaginary to the real axis while leaving $\text{g}$ positive (so $q$, $q_a$ and $t_a$ become unitary) and
consider the normalized Macdonald polynomials satisfying the duality symmetry
\eqref{duality-symmetry}:
\begin{subequations}
\begin{equation}\label{P-reno}
P_\lambda = c_\lambda p_\lambda
\end{equation}
with
\begin{align}\label{specialization:trigonometric}
c_\lambda:=1/p_\lambda(\hat{\rho}_{\text{g}})\stackrel{\text{Eq.}~\eqref{specialization-formula}}{=}\prod_{\alpha\in R^+}
 \frac{(\langle \rho_{\text{g}},\alpha^\vee \rangle \!  :\! \kappa_\alpha)_{\langle\lambda,\alpha^\vee\rangle}}
         {(\langle \rho_{\text{g}},\alpha^\vee \rangle+\text{g}_\alpha \! :\! \kappa_\alpha)_{\langle \lambda,\alpha^\vee\rangle}} .
\end{align}
\end{subequations}
Here we  have employed trigonometric Pochhammer symbols of the form
\begin{equation}
(a\! :\!\kappa)_l:=2^l\sin (\kappa a) \sin \kappa(a+1)\cdots \sin\kappa(a+l-1)
\end{equation}
when $l$ is positive integral, subject to the convention that $(a\! :\!\kappa)_0:= 1$.
Following the standard conventions for Pochhammer symbols, we will occasionally abbreviate products of the form
$(a_1\! :\!\kappa)_l\cdots (a_k\! :\!\kappa)_l$ by
$(a_1,\ldots,a_k\! :\!\kappa )_l$.

For $c$ nonnegative integral, let
\begin{equation}\label{Pc}
P_c
 :=\{\lambda\in P^+ \mid \langle \lambda,\hat\psi^\vee \rangle \le  c\} \quad\text{and so}\quad
\hat P_c
         = \{\mu \in \hat P^+ \mid \langle \mu,\hat \varphi^\vee \rangle \le  c\} ,
\end{equation}
where $\varphi$ and $\psi$ refer to the maximal roots of $R$ and $\hat{R}$, respectively. Let us furthermore denote the maximal {\em short} root of $R$  by $\vartheta$ (with the convention that all roots of $R$ are short if the root system is simply laced). (So $\vartheta$ is equal to the unique quasi-minuscule weight of $R$ and $\vartheta^\vee$ is the maximal coroot in $R^\vee$.) From now it will be always assumed---unless explicitly stated otherwise---that $c>1$ and that {\em for $R$ of type $E_7$ the value of $c$ is not a proper multiple of $6$} (cf. Remark \ref{E7-6c:rem} below).

\begin{theorem}[Finite-Dimensional Discrete Orthogonality Relations]\label{main:thm}
For
 \begin{subequations}
\begin{equation}\label{special-parameter}
\kappa
 =\frac{\pi}{u_{\varphi} (h_{\text{g}}+c)}\quad\text{with}\quad h_{\text{g}}  := \langle \rho_{\text{g}}, \hat \psi^\vee\rangle +  \text{g}_\psi ,
\end{equation}
the Macdonald polynomials $P_\lambda$, $\lambda\in P_c$ are analytic in $\text{g}>0$
and satisfy the orthogonality relations
\begin{equation}\label{ort-rel1}
\sum_{\lambda\in P_c} P_\lambda (\hat{\rho}_{\text{g}}+\mu )\overline{P_\lambda (\hat{\rho}_{\text{g}}+\tilde\mu )}\Delta (\lambda)=
\begin{cases}
0&\text{if}\ \mu \neq \tilde\mu \\
\frac{\mathcal{N}_0}{\hat{\Delta}(\mu)}&\text{if}\ \mu =\tilde\mu
\end{cases}
\end{equation}
($\mu,\tilde{\mu}\in \hat{P}_c$), or equivalently
\begin{equation}\label{ort-rel2}
\sum_{\mu\in \hat{P}_c} P_\lambda (\hat{\rho}_{\text{g}}+\mu )\overline{P_{\tilde\lambda} (\hat{\rho}_{\text{g}}+\mu )}\hat{\Delta} (\mu )=
\begin{cases}
0&\text{if}\ \lambda \neq \tilde\lambda \\
\frac{\mathcal{N}_0}{\Delta (\lambda )} &\text{if}\ \lambda =\tilde\lambda
\end{cases}
\end{equation}
($\lambda,\tilde{\lambda}\in P_c$), where
\begin{align}\label{ort-measure}
\Delta(\lambda) :=\prod_{\alpha\in R^+}
 \frac{\sin\kappa_\alpha \langle \rho_{\text{g}}+\lambda,\alpha^\vee\rangle}{\sin\kappa_\alpha \langle \rho_{\text{g}},\alpha^\vee\rangle}
 \frac{(\langle \rho_{\text{g}},\alpha^\vee \rangle+\text{g}_\alpha\! :\!\kappa_\alpha)_{\langle\lambda,\alpha^\vee\rangle}}
         {(\langle \rho_{\text{g}},\alpha^\vee \rangle+1-\text{g}_\alpha\! :\!\kappa_\alpha)_{\langle \lambda,\alpha^\vee\rangle}} ,
\end{align}
\begin{equation}\label{total-mass}
\mathcal{N}_0 := \sum_{\lambda\in P_c} \Delta(\lambda)=\sum_{\mu\in \hat{P}_c} \hat{\Delta}  (\mu)=
\hat{\mathcal{N}}_0 .
\end{equation}

Furthermore, the total mass $\mathcal{N}_0$ of the {\em positive} discrete orthogonality measure $\Delta$ admits a compact
product representation of the form $\mathcal{N}_0=\text{Ind}(R) \mathcal{N}_c$ with
$\text{Ind}(R):=|P/Q|$ and $\mathcal{N}_c$
given by Tables \ref{tab-Nc-1} (for $R$ simply laced), \ref{tab-Nc-2} (for $R$ multiply laced with $\hat{R}=R$) and \ref{tab-Nc-3} (for $R$ multiply laced with $\hat{R}=R^\vee$).
\end{subequations}
\end{theorem}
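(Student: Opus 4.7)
The plan is to deduce the theorem from three ingredients: simultaneous self-adjointness of a commuting family of Macdonald difference operators acting on the finite lattice $P_c$, nondegeneracy of their joint spectrum, and explicit evaluation of the quadratic norms and the total mass. The analyticity of $P_\lambda$ at the unitary parameter value, for $\lambda\in P_c$, would follow from the meromorphic characterization \eqref{P:explicit} together with the renormalization \eqref{specialization:trigonometric}: one has to verify that at $\kappa=\pi/(u_\varphi(h_{\text{g}}+c))$ none of the denominators $E_\omega(\rho_{\text{g}}+\lambda)-E_\omega(\rho_{\text{g}}+\mu)$ with $\mu<\lambda$ in $P_c$ vanish and that the normalizing factor $c_\lambda$ is regular for $\text{g}>0$; both reduce to checking sine factors on the boundary of the level-$c+h_{\text{g}}$ alcove.

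For the orthogonality I would pick $\omega\in\hat{P}^+$ that is (quasi-)minuscule and examine the restriction of $\mathcal{D}_\omega$ to functions supported on the discrete set $\{\rho_{\text{g}}+\lambda \mid \lambda\in P_c\}$. The explicit form \eqref{V}--\eqref{U} shows that at the chosen value of $\kappa$ the coefficients $V_\nu$ acquire a zero precisely when the shifted point would leave $P_c$, because $\sin\kappa_\alpha \langle \rho_{\text{g}}+\lambda,\alpha^\vee\rangle$ vanishes along the walls of the alcove of level $c+h_{\text{g}}$; the operator therefore descends to a well-defined endomorphism of $\mathbb{C}^{P_c}$. Self-adjointness with respect to the discrete weight $\Delta$ of \eqref{ort-measure} then amounts to the balance relation $V_\nu(\rho_{\text{g}}+\lambda)\,\Delta(\lambda)=V_{-\nu}(\rho_{\text{g}}+\lambda+\nu)\,\Delta(\lambda+\nu)$, which follows by a direct calculation from the definitions of $V_\nu$ and $\Delta$ as products of sines.

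The nondegeneracy asserts that for $\lambda\neq\tilde\lambda$ in $P_c$ there is at least one (quasi-)minuscule $\omega$ with $E_\omega(\rho_{\text{g}}+\lambda)\neq E_\omega(\rho_{\text{g}}+\tilde\lambda)$. For the classical series one reads this off from the interpretation of $\rho_{\text{g}}+\lambda$ as an interior alcove point; the exceptional types $E_6,E_7,E_8,F_4$ (with the spurious $E_7$ situation $6\mid c$ excluded as indicated in Remark~\ref{E7-6c:rem}) require a case-by-case verification, which is announced to live in the appendix. Granted self-adjointness and a simple joint spectrum, the orthogonality \eqref{ort-rel1} is immediate from the standard argument that eigenvectors for distinct joint eigenvalues of a commuting family of self-adjoint operators are mutually orthogonal in the Hermitian form determined by $\Delta$, and \eqref{ort-rel2} then follows by transposing the evaluation matrix.

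For the norms I would invoke the duality symmetry \eqref{duality-symmetry}, which under \eqref{special-parameter} interchanges the alcove lattices $P_c$ and $\hat{P}_c$; combining duality with the Pieri-type recurrence relations of Macdonald for (quasi-)minuscule weights \cite{mac:affine,las:inverse} provides a telescoping evaluation of $\hat\Delta(\mu)$ starting from the trivial value at $\mu=0$, which identifies the square norm of $P_\lambda$ with $\mathcal{N}_0/\Delta(\lambda)$ and yields the equality $\mathcal{N}_0=\hat{\mathcal{N}}_0$. Finally, the total mass $\mathcal{N}_0=\sum_{\lambda\in P_c}\Delta(\lambda)$ is brought into closed product form by applying the finitely truncated Aomoto--Ito--Macdonald basic hypergeometric summation for root systems \cite{die:certain,maz:finite,aom:elliptic,ito:symmetry,mac:formal}; the extra factor $\text{Ind}(R)=|P/Q|$ accounts for replacing the root-lattice sum by the weight-lattice sum. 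The main obstacle is the nondegeneracy of the spectrum at exceptional root systems, which is genuinely case-dependent and is the motivation for the exclusion of the sporadic $E_7$ value $6\mid c$.
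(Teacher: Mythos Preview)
Your overall architecture---self-adjointness of the restricted Macdonald operators with respect to the discrete weight, spectral nondegeneracy, Pieri recurrence via duality for the norms, and the truncated Aomoto--Ito--Macdonald sum for the total mass---matches the paper's strategy. However, there is a genuine gap in the nondegeneracy step: you restrict throughout to \emph{(quasi-)minuscule} $\omega$, but for $R=E_8$ there is only \emph{one} such weight (the quasi-minuscule $\omega_8=\varphi$; there are no minuscule weights since $\text{Ind}(E_8)=1$), and the eigenvalue $E_{\omega_8}(\rho_{\text{g}}+\lambda)$ does not separate the points of $P_c$. The paper's appendix shows that $\omega_8$ alone yields only four linearly independent orthogonality constraints on $\lambda-\mu$, and one must bring in the additional \emph{small} (but not (quasi-)minuscule) weight $\omega_1$ to complete the separation. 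This is precisely why the paper relies on the generalized Macdonald operators $\mathcal{D}_\omega$ of \cite{die-ems:generalized} for all small $\omega$, and correspondingly has to prove the more delicate boundary lemmas (regularity and vanishing of both $V_\nu$ and $U_{\nu,\eta}$, Lemmas~\ref{regular:lem}--\ref{num-U:lem}) rather than just the (quasi-)minuscule balance relation you invoke.

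Two smaller points. First, watch the two lattices: the operator $\mathcal{D}_\omega$ with $\omega\in\hat{P}^+$ translates by elements of $\hat{P}$, so its natural finite carrier is $\hat{\rho}_{\text{g}}+\hat{P}_c$ with weight $\hat{\Delta}$ (yielding \eqref{ort-rel2} directly), not $\rho_{\text{g}}+P_c$ with $\Delta$ as you wrote; for $\hat{R}=R^\vee$ with $R$ multiply laced these are genuinely different lattices. Second, your analyticity argument via \eqref{P:explicit} only shows regularity for \emph{generic} $\text{g}>0$ (the nondegeneracy is an identity of analytic functions in $\text{g}$, not a pointwise statement). The paper closes this gap by first establishing orthogonality for generic $\text{g}$ and then observing that the Gram--Schmidt formula $p_\lambda=m_\lambda-\sum_{\mu<\lambda}\frac{\langle m_\lambda,p_\mu\rangle_{\hat\Delta}}{\langle p_\mu,p_\mu\rangle_{\hat\Delta}}p_\mu$ has strictly positive denominators, which upgrades meromorphy to analyticity in all of $\text{g}>0$ by induction and continuity.
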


\begin{table}
\begin{tabular}{|c|c|c|c|c|}
\hline
 $R$ &$\mathcal N_c $  & $h_{\text{g}}$ \\
\hline & &\\ [-4.7ex] \hline
$A_n$  &  ${\scriptstyle \prod_{k=1}^n (1+k\text{g} : \kappa_\varphi )_{c-1}}$   & ${\scriptstyle (n+1)\text{g}}$ \\
\hline
$D_n$     &  ${\scriptstyle (1+(n-1)\text{g} : \kappa_\varphi )_{c-1}\prod_{k=1}^{n-1} (1+(2k-1)\text{g} : \kappa_\varphi )_{c-1}}$  & ${\scriptstyle 2(n-1)\text{g}}$ \\
\hline
$E_6$     &  ${\scriptstyle (1+\text{g},1+4\text{g},1+5\text{g},1+7\text{g},1+8\text{g} , 1+11\text{g}  : \kappa_\varphi )_{c-1} } $ & ${\scriptstyle 12\text{g}}$\\
\hline
$E_7$     &
${\scriptstyle (1+\text{g},1+5\text{g},1+7\text{g},1+9\text{g},1+11\text{g} , 1+13\text{g}, 1+17\text{g}  : \kappa_\varphi )_{c-1}} $
 & ${\scriptstyle 18\text{g}}$\\ [1ex]
\hline
$E_8$     &
${\scriptstyle (1+\text{g},1+7\text{g},1+11\text{g},1+13\text{g},1+17\text{g} , 1+19\text{g}, 1+23\text{g}, 1+29\text{g}  : \kappa_\varphi )_{c-1} }$
 & ${\scriptstyle 30\text{g}}$ \\
\hline
\end{tabular}
\vspace{2ex}
\caption{Value of $\mathcal N_c=\frac{\mathcal{N}_0}{\text{Ind}(R)}$ when $R$ is simply laced. }
\protect{\label{tab-Nc-1}}
\end{table}

\begin{table}
\begin{tabular}{|c|c|c|c|c|}
\hline
 $R$ &$\mathcal N_c $  & $h_{\text{g}}$ \\
\hline & \\ [-4.7ex] \hline
$B_n$    &
$
{\scriptstyle (1+\text{g}_\vartheta+2(n-1)\text{g}_\varphi:\kappa_\vartheta)_{2c-1}\prod_{k=1}^{n-1}}
\frac{(1+k\text{g}_\varphi,1+\text{g}_\vartheta+(n+k-2)\text{g}_\varphi:\kappa_\varphi)_{c-1}}{(1+2k\text{g}_\varphi:\kappa_\vartheta)_{c-1}^2}
$
& ${\scriptstyle 2(n-1)\text{g}_\varphi+\text{g}_\vartheta}$  \\
\hline
$C_n$     &
$\begin{array}{c}
{\scriptstyle (1+(n-1)\text{g}_\vartheta+2\text{g}_\varphi:\kappa_\vartheta)_{2c-1} \prod_{k=1}^{n-2}(1+(n+k)\text{g}_\vartheta+2\text{g}_\varphi:\kappa_\vartheta)_c^2} \\
{\scriptstyle \times \prod_{k=0}^{n-1}}  \frac{(1+k\text{g}_\vartheta+\text{g}_\varphi:\kappa_\varphi)_{c-1}}{(1+2k\text{g}_\vartheta+2\text{g}_\varphi:\kappa_\vartheta)_{2c-1}}
\end{array}
$
 &${\scriptstyle 2\text{g}_\varphi+(n-1)\text{g}_\vartheta}$ \\
\hline
$F_4$     &
$\begin{array}{c}{\scriptstyle (1+\text{g}_\varphi,1+\text{g}_\vartheta+3\text{g}_\varphi,1+2\text{g}_\vartheta+3\text{g}_\varphi,1+3\text{g}_\vartheta+5\text{g}_\varphi:\kappa_\varphi)_{c-1}}  \\
{\scriptstyle \times } \frac{(1+3\text{g}_\vartheta+4\text{g}_\varphi:\kappa_\vartheta)_{c-1}^2(1+5\text{g}_\vartheta+6\text{g}_\varphi:\kappa_\vartheta)_{c}^2}
{(1+4\text{g}_\varphi,1+2\text{g}_\vartheta+6\text{g}_\varphi:\kappa_\vartheta)_{c-1}^2}
\end{array}
$
 & ${\scriptstyle 6\text{g}_\varphi+3\text{g}_\vartheta}$  \\
\hline
$G_2$     &
$
\frac
{(1+\text{g}_\varphi,1+\text{g}_\vartheta+2\text{g}_\varphi:\kappa_\varphi)_{c-1}(1+2\text{g}_\vartheta + 3\text{g}_\varphi :\kappa_\vartheta)_c^2  }
{(1+3\text{g}_\varphi:\kappa_\vartheta)_{c-1}^2}
$
& ${\scriptstyle 3\text{g}_\varphi+\text{g}_\vartheta}$ \\
\hline
\end{tabular}
\vspace{2ex}
\caption{Value of $\mathcal N_c=\frac{\mathcal{N}_0}{\text{Ind}(R)}$ when $R$ is multiply laced with $\hat{R}=R$. }
\protect{\label{tab-Nc-2}}
\end{table}

\begin{table}
\begin{tabular}{|c|c|c|c|c|}
\hline
 $R$ &$\mathcal N_c $  & $h_{\text{g}}$ \\
\hline & \\ [-4.7ex] \hline
$B_n$    &
$
\frac{\prod_{k=1}^{n} (1+\text{g}_\vartheta+(n+k-2)\text{g}_\varphi :\kappa )_{c-1}\prod_{k=1}^{[n/2]} (1+(2k-1)\text{g}_\varphi :\kappa )_{c-1}}{\prod_{k=1}^{[n/2]} (1+2(n-k)\text{g}_\varphi :\kappa )_{c-1}}
$
 & ${\scriptstyle 2(n-1)\text{g}_\varphi+2\text{g}_\vartheta}$ \\
\hline
$C_n$     &
$
\frac{\prod_{k=1}^{n} (1+(k-1)\text{g}_\vartheta+\text{g}_\varphi :\kappa )_{c-1}\prod_{k=1}^{[n/2]} (1+(2n-2k-1)\text{g}_\vartheta+2\text{g}_\varphi :\kappa )_{c-1}}{\prod_{k=1}^{[n/2]} (1+2(k-1)\text{g}_\vartheta+2\text{g}_\varphi :\kappa )_{c-1}}
$
& ${\scriptstyle 2\text{g}_\varphi+2(n-1)\text{g}_\vartheta }$ \\
\hline
$F_4$     &
$
\frac{(1+\text{g}_\varphi,1+\text{g}_\vartheta+3\text{g}_\varphi,1+2\text{g}_\vartheta+3\text{g}_\varphi,1+3\text{g}_\vartheta+4\text{g}_\varphi,1+3\text{g}_\vartheta+5\text{g}_\varphi,1+5\text{g}_\vartheta+6\text{g}_\varphi:\kappa)_{c-1}}
{(1+4\text{g}_\varphi,1+2\text{g}_\vartheta+6\text{g}_\varphi:\kappa)_{c-1}}
$
& ${\scriptstyle 6\text{g}_\varphi+6\text{g}_\vartheta }$  \\
\hline
$G_2$     &
$
\frac
{(1+\text{g}_\varphi, 1+\text{g}_\vartheta+2\text{g}_\varphi, 1+ 2\text{g}_\vartheta + 3\text{g}_\varphi  :\kappa)_{c-1}}
{(1+3\text{g}_\varphi :\kappa)_{c-1}}
$
& ${\scriptstyle 3\text{g}_\varphi+3\text{g}_\vartheta}$  \\
\hline
\end{tabular}
\vspace{2ex}
\caption{Value of $\mathcal N_c=\frac{\mathcal{N}_0}{\text{Ind}(R)}$ when $R$ is multiply laced with $\hat{R}=R^\vee$. }
\protect{\label{tab-Nc-3}}
\end{table}

\begin{remark}
For $R$ of type $A$ Theorem \ref{main:thm} amounts to \cite[\text{Eq.}~(4.15b)]{die-vin:quantum} whereas for $R=\hat{R}$ of type $C$ one recovers a special case of the orthogonality in \cite[\text{Sec.~6}]{die-sto:multivariable} (with $\text{g}=\text{g}_\vartheta$, $\text{g}_a=\text{g}_b=\text{g}_\varphi$ and $\text{g}_c=\text{g}_d=0$).
\end{remark}

\begin{remark}\label{tr:rem}
For $\kappa$ as in Theorem \ref{main:thm}, the Macdonald parameters $q$ and $t$ satisfy the
truncation relation
\begin{subequations}
\begin{equation}\label{tr}
t_\vartheta^{m\langle\rho_\vartheta,\hat{\psi}^\vee\rangle}
t_\varphi^{\langle\rho_{\varphi\setminus\vartheta},\hat{\psi}^\vee\rangle} t_{\psi}\, q_\varphi^c=1,
\end{equation}
where $m:=u_\varphi/u_\vartheta$ ($ \in \{1,2,3\} $) and
\begin{equation}\label{rho}
\rho:=\frac{1}{2}\sum_{\alpha\in R^+}\alpha ,\qquad \rho_\vartheta:=\frac{1}{2}\sum_{\substack{\alpha\in R^+\\ \|\alpha\|=\|\vartheta\|}}\!\!\alpha ,\qquad \rho_{\varphi\setminus\vartheta}:=
\frac{1}{2}\sum_{\substack{\alpha\in R^+\\ \|\alpha\|\neq \|\vartheta\|}}\!\!\!\!\!\!\alpha =\rho-\rho_{\vartheta}
\end{equation}
\end{subequations}
(so $\rho_{\text{g}}=\text{g}_\vartheta\rho_\vartheta+\text{g}_\varphi\rho_{\varphi\setminus\vartheta}$).
If the dual root system $R^\vee$ is isomorphic to $R$, the truncation relation in Eq. ~\eqref{tr} becomes of the form $t_\vartheta^{h/2}t_\varphi^{h/2}q_\varphi^c=1$, where
$h=h(R):=\langle\rho ,\vartheta^\vee\rangle +1$ (the Coxeter number of $R$).
In particular, for $R$ simply laced the truncation relation reads $t^{h}q_\varphi^c=1$.
\end{remark}

\begin{remark}\label{root-of-1:rem} Since for $\kappa$ as in Theorem \ref{main:thm} one has that
 $q_a=\exp (\frac{2\pi i}{m_a(h_{\text{g}}+c)})$ and  $t_a=\exp (\frac{2\pi i \text{g}_a }{m_a(h_{\text{g}}+c)})$
 with $m_a:=u_\varphi/u_a$ ($\in \{ 1,m\}$, cf. Remark \ref{tr:rem}) for $a\in R\cup\hat{R}$, it is clear that when is  $\text{g}$ integral-valued $q_a$ and $t_a$ are
roots of unity, cf. \cite[\text{Sec}.~5]{che:macdonalds} and \cite[\text{Sec}.~5]{kir:inner}.
\end{remark}

\begin{remark}
Both orthogonality relations in Eqs. \eqref{ort-rel1} and \eqref{ort-rel2} are equivalent to the unitarity of the matrix $[S_{\lambda ,\mu}]_{\lambda\in P_c,\mu\in\hat{P}_c}$ with
\begin{equation}
S_{\lambda ,\mu} :=  \left(\frac{\Delta (\lambda )\hat{\Delta} (\mu)}{\mathcal{N}_0}\right)^{1/2} P_\lambda (\hat{\rho}_{\text{g}}+\mu) .
\end{equation}
Since the parameter specialization in Eq. \eqref{special-parameter} preserves the duality symmetry in the sense that $\hat{\kappa}=\frac{\pi}{u_{\psi} (\hat{h}_{\text{g}}+c)}   =\kappa$
(because $\hat{h}_{\text{g}}=\langle \hat{\rho}_{\text{g}}, \hat{\varphi}^\vee\rangle +  \text{g}_\varphi=h_{\text{g}}$ and $u_\psi=u_\varphi$),
the matrix in question
inherits from Eq. \eqref{duality-symmetry} in addition the duality symmetry $\hat{S}_{\mu,\lambda}=S_{\lambda,\mu}$.
\end{remark}

\begin{remark}\label{equal-label:rem}
When $R$ is either simply laced or multiply laced with $\hat{R}=R^\vee$, the product formulas in Tables \ref{tab-Nc-1} and \ref{tab-Nc-3} follow from the terminating Aomoto-Ito-Macdonald-type basic hypergeometric summation formula in \cite[Thm.~3]{maz:finite}. In this situation the value of $\mathcal{N}_c$ can be rewritten as (cf. \cite[Eq.~(4.6)]{maz:finite})
\begin{equation}\label{NcRhat}
  \mathcal N_c
=\frac{\prod_{\alpha\in R^+} (1+\langle  \rho_{\text{g}},\alpha^\vee \rangle \! :\! \kappa_\alpha)_{c-1}}
       {\prod_{\alpha\in R^+\backslash I} (1+\langle  \rho_{\text{g}},\alpha^\vee \rangle  - \text{g}_\alpha \! : \! \kappa_\alpha)_{c-1}} ,
\end{equation}
where $I\subseteq R^+$ consists of the simple roots of $R$.
In the equal label case, i.e. with the root multiplicity function $\text{g}$ being constant (so in particular when $R$ is simply laced), the product formula in \eqref{NcRhat} simplifies to (cf. \cite[Eq.~(4.4)]{maz:finite})
\begin{equation}
  \mathcal N_c
=\prod_{k=1}^n (1+\text{g} e_k \!: \! \kappa_\varphi)_{c-1} ,
\end{equation}
where $e_1,\ldots ,e_n$ refer to the exponents of the Weyl group (thus explaining the structure of the formulas in Table \ref{tab-Nc-1} and those in Table \ref{tab-Nc-3} when $\text{g}_\vartheta=\text{g}_\varphi=\text{g}$).
\end{remark}

\begin{remark} For $\text{g}=1$, Macdonald's polynomials become Weyl characters \cite{mac:orthogonal}. Theorem \ref{main:thm} then boils down to the following elementary orthogonality relations for the antisymmetric monomials
\begin{equation}
\chi_\lambda:=\sum_{w\in W} \det (w) e^{w\lambda} \qquad (\lambda\in P^+)
\end{equation}
at $\kappa=\frac{\pi}{u_\varphi (\hbar+c)}$  with
$\hbar =\hbar (R,\hat{R}):=\langle \rho, \hat \psi^\vee\rangle +  1$ ($\in\{ h,h^\vee\}$),\footnote{Here $h^\vee=h^\vee(R):=\langle \rho,\varphi^\vee\rangle+1$ (the dual Coxeter number of $R$).} cf. \cite[\S 13.8]{kac:infinite}, \cite[\text{Sec.}~6]{kir:inner}, \cite[\text{Sec.}~5.3]{hri-pat:discretization}, \cite[\text{Sec}.~8.4]{die-ems:discrete} (and also \cite[\text{Sec}.~4.2]{die:finite-dimensional} and
\cite[\text{Sec}.~6.2]{kor-stro:slnk} for the special case when $R$ is of type $A$):
\begin{equation}\label{weyl-characters-orto}
\sum_{\lambda\in P_c} \chi_{\rho+\lambda} (\hat{\rho}+\mu )\overline{\chi_{\rho+\lambda} (\hat{\rho}+\tilde\mu )}=
\begin{cases}
0&\text{if}\ \mu \neq \tilde\mu \\
 \text{Ind}(R,\hat{R}) (\hbar+c)^n &\text{if}\ \mu =\tilde\mu
\end{cases}
\end{equation}
($\mu,\tilde{\mu}\in \hat{P}_c$).
Here the index governing the value of the quadratic norms is defined as $\text{Ind}(R,\hat{R}):=|P/(u_\varphi \hat Q^\vee) |= |P/Q|\, |Q/(u_\varphi \hat Q^\vee) |$ (so $\text{Ind}(R,\hat{R})=\text{Ind}(R)$ if $R$ is simply-laced or $\hat R=R^\vee$, and  $\text{Ind}(R,\hat{R})=m^{n_\vartheta}\text{Ind}(R) $ with $n_\vartheta$ denoting the number of short simple roots of $R$ otherwise).
The orthogonality in \eqref{weyl-characters-orto} readily follows  from the orthogonality of the discrete Fourier basis on $P/(\hbar+c)u_\varphi \hat Q^\vee$:
\begin{equation}\label{character:ort}
\sum_{\lambda\in P/(\hbar+c)u_\varphi \hat Q^\vee} e^{\frac{2\pi i}{u_\varphi (\hbar+c)} \langle \lambda,\mu\rangle } =
\begin{cases} 0 &\text{if}\ \mu\in \hat{P}\setminus (\hbar+c)u_\varphi Q^\vee \\
|P/(\hbar+c)u_\varphi \hat{Q}^\vee |&\text{if}\ \mu\in (\hbar+c)u_\varphi Q^\vee
\end{cases},
\end{equation}
 upon antisymmetrization and  using that $|P/(\hbar+c)u_\varphi \hat{Q}^\vee|=\text{Ind}(R,\hat{R})(\hbar+c)^n$.
 When comparing the values of the quadratic norms in \eqref{weyl-characters-orto} with the ones obtained from Theorem \ref{main:thm} through specialization, one deduces the following remarkable trigonometric identity for root systems
at $\text{g}=1$ (and thus $\kappa_\alpha=\frac{\pi}{m_\alpha (\hbar+c)}$):
\begin{equation}
\mathcal{N}_c\prod_{\alpha\in {R}^+} \sin \kappa_\alpha\langle {\rho},\alpha^{\vee}\rangle = \frac{\text{Ind}(R,\hat{R})}{\text{Ind}(R)}(\hbar+c)^n
\end{equation}
(cf. also Remark \ref{equal-label:rem}).
\end{remark}

\section{Analyticity and Orthogonality}\label{sec4}
From now on it is always assumed---unless explicitly stated otherwise---that $\kappa$ takes the value in Eq. \eqref{special-parameter} (with $\text{g}>0$).

\subsection{Meromorphy}
The regularity of (the expansion coefficients of) the Macdonald polynomials in Section \ref{sec3} at the above value of $\kappa$ hinges for generic $\text{g}>0$ on the following lemma.

\begin{lemma}[Nondegeneracy Eigenvalues]\label{nondegeneracy:lem}
For any $\lambda,\mu\in P_c$ \eqref{Pc} with $\lambda\neq\mu$, there exists a small weight $\omega\in\hat{P}^+$ such that the eigenvalues $E_\omega$ \eqref{spectrum} are distinct:
\begin{equation}
E_\omega(\rho_{\text{g}}+\lambda)\neq E_\omega(\rho_{\text{g}}+\mu)
\end{equation}
as analytic functions in $\text{g}>0$.
\end{lemma}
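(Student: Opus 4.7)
The plan is to reduce the problem to an asymptotic statement as $\text{g}\to+\infty$, combining the analyticity of $E_\omega(\rho_{\text{g}}+\lambda)-E_\omega(\rho_{\text{g}}+\mu)$ in $\text{g}\in(0,\infty)$ with the Taylor expansion of $\hat m_\omega$ in $\kappa$ recorded at the end of Section~\ref{sec2}. By analyticity it will suffice to exhibit, for each fixed pair $\lambda\neq\mu$ in $P_c$, some small $\omega\in\hat P^+$ and some value (or limiting direction) of $\text{g}$ for which the eigenvalue difference is non-zero.

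First I would try $\omega\in\hat P^+$ (quasi-)minuscule, so that $E_\omega=\hat m_\omega+\epsilon_{\omega,0}$ with $\epsilon_{\omega,0}$ independent of $\lambda$. Along the special curve $\kappa=\pi/(u_\varphi(h_{\text{g}}+c))$ one has $\kappa=O(1/\text{g})$ as $\text{g}\to+\infty$, so the Taylor expansion of $\hat m_\omega(\rho_{\text{g}}+\lambda)$ from Section~\ref{sec2} yields
\[
E_\omega(\rho_{\text{g}}+\lambda)-E_\omega(\rho_{\text{g}}+\mu)
=-\tfrac{2\kappa^2|W\omega|\,\|\omega\|^2}{n}\bigl(\|\rho_{\text{g}}+\lambda\|^2-\|\rho_{\text{g}}+\mu\|^2\bigr)+O(\kappa^3).
\]
Since $\|\rho_{\text{g}}+\lambda\|^2-\|\rho_{\text{g}}+\mu\|^2=\|\lambda\|^2-\|\mu\|^2+2\langle\rho_{\text{g}},\lambda-\mu\rangle$ is at most linear in the multiplicity variables $\text{g}_\vartheta,\text{g}_\varphi$, any (quasi-)minuscule $\omega$ will separate the eigenvalues as soon as this quadratic-norm difference does not vanish identically in $\text{g}$.

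The only obstruction is therefore the degenerate case in which $\|\rho_{\text{g}}+\lambda\|^2\equiv\|\rho_{\text{g}}+\mu\|^2$, forcing
\[
\|\lambda\|^2=\|\mu\|^2,\qquad \langle\rho_\vartheta,\lambda-\mu\rangle=\langle\rho_{\varphi\setminus\vartheta},\lambda-\mu\rangle=0.
\]
For such degenerate pairs I would push the Taylor expansion to higher order, bringing in the $W$-invariant polynomials of degrees equal to the exponents of $W$, or equivalently pass from (quasi-)minuscule $\omega$ to genuinely small $\omega$, whose eigenvalues $E_\omega$ include the additional orbit sums $\hat m_{\mu'}$ with $\mu'<\omega$. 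Because the full family $\{\hat m_\omega:\omega\in\hat P^+\}$ already separates $W$-orbits in $E$, the question ultimately reduces to whether the \emph{small} weights on their own suffice---a finite case check for each root system type.

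The hardest part I expect is precisely this case analysis for the exceptional root systems $E_7$, $E_8$ (and to a lesser extent $F_4$), where the supply of small weights is thinnest and degenerate pairs are most delicate; this is consistent both with the arithmetic restriction on $c$ for type $E_7$ in Remark~\ref{E7-6c:rem} and with the paper's announcement that the exceptional cases are deferred to an appendix. For the classical types $A_n,B_n,C_n,D_n$ the required separation can instead be read off directly from the coordinate description of the small weights and their orbit sums, without appealing to higher-order Taylor data.
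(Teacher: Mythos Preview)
Your overall framing is reasonable---reduce by analyticity in $\text{g}$ to exhibiting nonvanishing for some value or limit of $\text{g}$---but the Taylor-expansion step does much less work than you suggest, and your fallback remains a plan rather than a proof.

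The $\kappa^2$ term only gives the single invariant $\|\rho_{\text{g}}+\lambda\|^2$. The condition $\|\rho_{\text{g}}+\lambda\|^2\equiv\|\rho_{\text{g}}+\mu\|^2$ (i.e.\ $\|\lambda\|^2=\|\mu\|^2$ and $\langle\rho_\vartheta,\lambda-\mu\rangle=\langle\rho_{\varphi\setminus\vartheta},\lambda-\mu\rangle=0$) is codimension at most three and is satisfied by many pairs in $P_c$, so the bulk of the work lies entirely in what you call the ``degenerate case''. Pushing the Taylor expansion to higher order is delicate here because $\kappa\sim 1/\text{g}$ while the coefficients are polynomials in $\rho_{\text{g}}$, so the orders mix; you would need to organise this carefully to extract independent $W$-invariants, and you have not done so. The suggestion to ``pass to genuinely small $\omega$'' is in the right direction, but it is precisely the content of the lemma whether the small $\omega$ suffice, and you have not supplied that argument.

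The paper's route for the classical types is both different and cleaner: it bypasses Taylor expansions altogether. Since for $A_n,B_n,C_n,D_n$ \emph{every} fundamental weight is small, the functions $\hat m_{\omega_1},\ldots,\hat m_{\omega_n}$ are available, and these are known to separate points of the open fundamental alcove $\frac{\pi}{\kappa}\hat A$ of the affine Weyl group $W\ltimes\hat Q^\vee$; one then only needs the containment $\rho_{\text{g}}+P_c\subset\frac{\pi}{\kappa}\hat A$, which follows from the moment bounds (Lemma~\ref{mb:lem}). This handles all classical $\lambda\neq\mu$ at once, for every $\text{g}>0$, without any degeneracy dichotomy. For the exceptional types the paper, like you, resorts to a case-by-case check; its specific mechanism (eliminating $t_\vartheta$ via the truncation relation, reducing modulo the cyclotomic polynomial $\Phi_{\tilde h}$, and differentiating at $q=1$ to extract linear constraints on $\lambda-\mu$) is what actually pins down the $E_7$ obstruction you allude to, and is not visible from your asymptotic outline.
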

\begin{proof}
For the classical root systems all fundamental weights are small.
The stated nondegeneracy of the eigenvalues follows in this situation from the fact that the trigonometric polynomials $\hat{m}_\omega$ (and thus $E_\omega$) corresponding to the fundamental weights
$\omega\in \hat{P}^+$ separate the points of $\rho_{\text{g}}+P_c\subset \frac{\pi}{\kappa}\hat{A}$, where $\hat{A}$ refers to
the open fundamental alcove
$\{ x\in V\mid 0<\langle x,\alpha\rangle<1,\,\forall\alpha\in \hat{R}^+\}$ of the affine Weyl group
$W\ltimes \hat{Q}^\vee$. For the exceptional root systems the nondegeneracy in question follows in turn from a case by case examination of the relevant eigenvalues
that is performed in the appendix at the end of the paper.
\end{proof}

To infer the regularity of $p_\lambda$ for $\lambda\in P_c$ and generic values of the multiplicity parameter, we combine Lemma \ref{nondegeneracy:lem} with the expressions for the Macdonald difference operators in Eqs. \eqref{D}--\eqref{U} to conclude that the Macdonald polynomials at issue are meromorphic in $\text{g}>0$ as a consequence of an explicit representation similar to that in Eq. \eqref{P:explicit}.

\begin{proposition}[Meromorphy]\label{meromorphy:prp}
The Macdonald polynomials $p_\lambda$, $\lambda\in P_c$ are meromorphic in $\text{g}>0$.
\end{proposition}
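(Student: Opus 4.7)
The plan is to adapt the explicit representation of $p_\lambda$ in \eqref{P:explicit} by replacing the single (quasi-)minuscule $\omega$ with a family of small weights $\{\omega_\mu\}$ selected via Lemma \ref{nondegeneracy:lem}. The guiding observation is that every dominant $\mu\leq\lambda$ automatically lies in $P_c$ whenever $\lambda$ does, since $\hat{\psi}^\vee$ is a dominant weight and thus $\langle\alpha,\hat{\psi}^\vee\rangle\geq 0$ for every $\alpha\in R^+$, so $\mu\leq\lambda$ in the dominance order preserves the truncation inequality $\langle\,\cdot\,,\hat{\psi}^\vee\rangle\leq c$. Consequently, Lemma~\ref{nondegeneracy:lem} applies to every pair $(\lambda,\mu)$ that will appear below.

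Concretely, I would work in the finite-dimensional subspace
$V_\lambda:=\mathrm{span}\{m_\mu\mid \mu\in P^+,\ \mu\leq\lambda\}$
of $\mathbb{C}[P]^W$. Triangularity of $\mathcal{D}_\omega$ on the monomial basis with respect to the dominance order---read off from the shift structure of \eqref{D}--\eqref{U}---ensures that $V_\lambda$ is stabilized by $\mathcal{D}_\omega$ for every small $\omega\in\hat{P}^+$, and that the matrix entries of the restriction are meromorphic in $\text{g}>0$ at the special $\kappa$ prescribed in \eqref{special-parameter}. For each dominant $\mu<\lambda$, invoke Lemma~\ref{nondegeneracy:lem} to pick a small weight $\omega_\mu\in\hat{P}^+$ such that $\text{g}\mapsto E_{\omega_\mu}(\rho_{\text{g}}+\lambda)-E_{\omega_\mu}(\rho_{\text{g}}+\mu)$ is a not-identically-zero analytic function of $\text{g}$, and assemble the operator
\begin{equation*}
\mathcal{P}_\lambda:=\prod_{\substack{\mu\in P^+\\ \mu<\lambda}}
\frac{\mathcal{D}_{\omega_\mu}-E_{\omega_\mu}(\rho_{\text{g}}+\mu)}{E_{\omega_\mu}(\rho_{\text{g}}+\lambda)-E_{\omega_\mu}(\rho_{\text{g}}+\mu)}.
\end{equation*}
For generic complex parameter values, each factor fixes the eigenvector $p_\lambda$ while annihilating $p_\mu$, so exactly the same telescoping as in \eqref{P:explicit} produces the identity $\mathcal{P}_\lambda m_\lambda=p_\lambda$.

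Meromorphy of $p_\lambda$ is then immediate: in the basis $\{m_\mu\}_{\mu\leq\lambda}$ of $V_\lambda$ the numerator factors give endomorphisms with matrix coefficients meromorphic in $\text{g}>0$, while each scalar denominator is a non-identically-zero analytic function of $\text{g}$, so its reciprocal is meromorphic. Hence $\mathcal{P}_\lambda$ is a meromorphic $\mathrm{End}(V_\lambda)$-valued function of $\text{g}>0$, and the same is therefore true of $p_\lambda=\mathcal{P}_\lambda m_\lambda$. The main technical point, and the one I expect to require the most care, is to verify that the matrix entries of $\mathcal{D}_\omega$ on $V_\lambda$ remain meromorphic at the value of $\kappa$ in \eqref{special-parameter} in spite of the potentially singular factors $1/\sin\kappa_\alpha\langle x,\alpha^\vee\rangle$ appearing in \eqref{V}--\eqref{U}; this rests on the standard regularization phenomenon for Macdonald operators acting on $W$-symmetric polynomials, whereby the apparent singularities cancel upon symmetrization over the relevant orbits.
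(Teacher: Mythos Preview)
Your approach is essentially identical to the paper's: the paper also observes that $\mu<\lambda$ with $\lambda\in P_c$ forces $\mu\in P_c$, then for each such $\mu$ selects via Lemma~\ref{nondegeneracy:lem} a small weight $\omega_{\lambda\mu}$ separating the eigenvalues, and writes $p_\lambda$ as the product operator $\prod_{\mu<\lambda}\frac{\mathcal{D}_{\omega_{\lambda\mu}}-E_{\omega_{\lambda\mu}}(\rho_{\text{g}}+\mu)}{E_{\omega_{\lambda\mu}}(\rho_{\text{g}}+\lambda)-E_{\omega_{\lambda\mu}}(\rho_{\text{g}}+\mu)}$ applied to $m_\lambda$, concluding meromorphy from the explicit formulas \eqref{D}--\eqref{U}. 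The regularity concern you flag at the end---that the apparent poles of $V_\nu$, $U_{\nu,\eta}$ cancel on $W$-symmetric polynomials so that the matrix entries of $\mathcal{D}_\omega$ on the monomial basis are regular (in fact Laurent polynomial) in $q,t$---is not spelled out in the paper's proof either; it is taken for granted as a standard property of the Macdonald operators established in \cite{mac:orthogonal,die-ems:generalized}.
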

\begin{proof}
For $\lambda\in P_c$ and $\mu <\lambda$, one has that $\mu\in P_c$ (because $\hat{\psi}\in P^+$). In this situation,
let $\omega_{\lambda\mu}$ denote a small weight in $\hat{P}^+$ from Lemma \ref{nondegeneracy:lem} such that $E_{\omega_{\lambda\mu}}(\rho_{\text{g}}+\lambda)\neq E_{\omega_{\lambda\mu}}(\rho_{\text{g}}+\mu)$ as analytic expressions in $\text{g}$. The meromorphy of $p_\lambda$ in $\text{g}>0$ is now immediate from the formula
(cf. Eq.~\eqref{P:explicit})
\begin{equation*}
p_\lambda=
\biggl(
\prod_{\substack{\mu\in P_c\\  \mu < \lambda}} \frac{\mathcal{D}_{\omega_{\lambda\mu }}-E_{\omega_{\lambda\mu}} (\rho_{\text{g}}+\mu)}{E_{\omega_{\lambda\mu}} (\rho_{\text{g}}+\lambda)-E_{\omega_{\lambda\mu}} (\rho_{\text{g}}+\mu)}
\biggr)
m_\lambda ,
\end{equation*}
combined with the explicit expression for $\mathcal{D}_{\omega_{\lambda\mu }}$ stemming from Eqs. \eqref{D}--\eqref{U}.
\end{proof}

\subsection{Finite Macdonald difference operators}
For the parameter regime in Theorem \ref{main:thm} (all factors of)  $\Delta (\lambda)$ ($\lambda\in P_c$) and $\hat{\Delta}(\mu)$ ($\mu\in\hat{P}_c$)
are positive because the arguments of the sine functions take values in the interval $(0,\pi)$, as is readily seen from the following estimates.
\begin{lemma}[Moment Bounds]\label{mb:lem}
For any $\lambda\in P_c$ and $\alpha\in R^+$, the following inequalities hold:
\begin{equation}
(i)\ \langle \lambda,\alpha^\vee\rangle\leq m_\alpha\langle \lambda,\hat{\psi}^\vee\rangle\leq m_\alpha c\quad\text{and}\quad (ii)\ \text{g}_\alpha\leq\langle \rho_{\text{g}},\alpha^\vee\rangle\leq m_\alpha h_{\text{g}} -\text{g}_\alpha ,
\end{equation}
i.e.  $0<\text{g}_\alpha\leq \langle\rho_{\text{g}}+\lambda,\alpha^\vee\rangle\leq m_\alpha (h_{\text{g}}+c)-\text{g}_\alpha<m_\alpha (h_{\text{g}}+c)$
(where $m_\alpha=u_\varphi/u_\alpha$, cf. Remark \ref{root-of-1:rem}).
\end{lemma}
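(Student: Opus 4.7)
The plan is to dispatch parts (i) and (ii) using the maximality properties of $\hat\psi$ together with the fine structure of $\rho_{\text{g}}$. For (i), the second inequality is immediate from the definition of $P_c$. For the first, I would rewrite it as $\langle\lambda, m_\alpha\hat\psi^\vee-\alpha^\vee\rangle\geq 0$ and verify that $m_\alpha\hat\psi^\vee - \alpha^\vee$ lies in the non-negative cone spanned by the simple coroots, so that the inequality follows from $\lambda \in P^+$. When $\hat R = R$ one has $m_\alpha = \|\varphi\|^2/\|\alpha\|^2$, and $m_\alpha\varphi^\vee - \alpha^\vee = (2/\|\alpha\|^2)(\varphi-\alpha)$ with $\varphi-\alpha\in Q^+$ (by maximality of $\varphi$); when $\hat R = R^\vee$ one has $m_\alpha = 1$ and $\vartheta^\vee - \alpha^\vee\in Q^{\vee,+}$ by the maximality of $\vartheta^\vee$ in $R^\vee$.

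For the lower bound in (ii), I would use that $\langle\rho_{\text{g}}, \alpha_j^\vee\rangle = \text{g}_{\alpha_j}$ on the simple coroots, combined with the expansion $\alpha^\vee = \sum_j c_j \alpha_j^\vee$ where $c_j\in\mathbb{Z}_{\geq 0}$, to obtain $\langle \rho_{\text{g}},\alpha^\vee\rangle = \sum_j c_j \text{g}_{\alpha_j}$. The classical observation that every positive root is $W$-conjugate to a simple root in its own support then supplies an index $j_*$ with $c_{j_*} \geq 1$ and $\text{g}_{\alpha_{j_*}} = \text{g}_\alpha$, which closes the lower bound.

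For the upper bound in (ii), I would apply (i) at $\lambda = \rho_{\text{g}}$ to get $\langle\rho_{\text{g}}, m_\alpha\hat\psi^\vee - \alpha^\vee\rangle\geq 0$ and then control the residual $m_\alpha\text{g}_\psi - \text{g}_\alpha$. When $\alpha$ and $\hat\psi$ lie in the same $W$-orbit this residual vanishes and the bound is immediate. In the remaining cross-length subcase (necessarily with $R$ multiply laced) I would first reduce via (i) to the extremal representative $\alpha=\vartheta$ (when $\hat R = R$) or $\alpha = \varphi$ (when $\hat R = R^\vee$), and then exploit the identity $m\varphi^\vee - \vartheta^\vee = (2/\|\vartheta\|^2)(\varphi-\vartheta)$. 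The key structural input is that $\varphi - \vartheta$ is itself a short root of $R$ in any multiply laced irreducible root system (and dually $\vartheta^\vee - \varphi^\vee$ is a short root of $R^\vee$); granted this, the identity collapses to $\beta^\vee$ for a short root $\beta$, and the lower bound of (ii) applied to $\beta$ supplies the slack needed to absorb the residual.

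The main obstacle is this cross-length subcase of (ii)'s upper bound, whose cleanest resolution hinges on the uniform but type-by-type verifiable fact that $\varphi-\vartheta$ is a short root in any multiply laced irreducible root system.
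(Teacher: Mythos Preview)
Your proposal is correct. Part (i) and the lower bound in (ii) proceed along essentially the same lines as the paper: for (i) the paper writes $\langle\lambda,\alpha^\vee\rangle = u_\alpha^{-1}\langle\lambda,\hat\alpha\rangle \le u_\alpha^{-1}\langle\lambda,\psi\rangle = m_\alpha\langle\lambda,\hat\psi^\vee\rangle$ directly (avoiding the case split on $\hat R$), and for the lower bound in (ii) the paper uses the decomposition $\rho_{\text{g}}=\text{g}_\vartheta\rho_\vartheta+\text{g}_\varphi\rho_{\varphi\setminus\vartheta}$ together with the stabilizer description of $\rho_\vartheta$ and $\rho_{\varphi\setminus\vartheta}$, which encodes the same ``same-length simple root in the support'' fact you invoke.

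For the upper bound in (ii) your route is genuinely different. The paper stays with the $\rho_\vartheta,\rho_{\varphi\setminus\vartheta}$ decomposition and only needs the weaker structural input that the expansion of $\varphi-\vartheta$ in the simple basis contains a \emph{short simple} root. You instead reduce to the extremal representative of each length and then use the stronger fact that $\varphi-\vartheta$ is itself a short root (equivalently $\vartheta^\vee-\varphi^\vee$ a short coroot), which identifies $m\varphi^\vee-\vartheta^\vee$ with a single $\beta^\vee$ and lets you close by recycling the already-proved lower bound of (ii). Your argument is cleaner once the structural fact is granted, at the price of a slightly stronger hypothesis that you must check type by type; the paper's direct computation needs less about $\varphi-\vartheta$ but is more hands-on. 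One small wording issue: ``reduce via (i)'' is imprecise, since (i) compares against $\hat\psi^\vee$, not against $\vartheta^\vee$ or $\varphi^\vee$; what you actually use is that the dominant representative in a $W$-orbit dominates every other element of the orbit in the root-lattice order, which is the same style of argument but not literally (i).
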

\begin{proof}
Part $(i)$ is straightforward: $\langle \lambda,\alpha^\vee\rangle=u_\alpha^{-1}\langle \lambda,\hat{\alpha}\rangle\leq
u_\alpha^{-1}\langle \lambda,\psi\rangle=m_\alpha\langle \lambda,\hat{\psi}^\vee\rangle\leq m_\alpha c$.
For the proof of part $(ii)$ we write
$\rho_{\text{g}}=\text{g}_\vartheta\rho_\vartheta+\text{g}_\varphi\rho_{\varphi\setminus\vartheta}$ (cf. Remark \ref{tr:rem}), i.e.
$\langle \rho_{\text{g}},\alpha^\vee\rangle=\text{g}_\vartheta \langle \rho_\vartheta,\alpha^\vee\rangle + \text{g}_\varphi\langle \rho_{\varphi\setminus\vartheta},\alpha^\vee\rangle$. The lower bound $\text{g}_\alpha$ is now immediate from the fact that
$\langle \rho_\vartheta,\alpha^\vee\rangle>0$ if $\|\alpha\|=\|\vartheta\|$ and $\langle \rho_{\varphi\setminus\vartheta},\alpha^\vee\rangle>0$ if $\|\alpha\|\neq \|\vartheta\|$, because the (possibly empty) parabolic subsystems of $R$ corresponding to the stabilizers of $\rho_\vartheta$ and $\rho_{\varphi\setminus\vartheta}$ are generated  by the simple roots $\beta$
with $\|\beta \|\neq \|\vartheta\|$ and by the simple roots $\beta$ with $\|\beta \|=\|\vartheta\|$, respectively. To infer the upper bound, we compute:
\begin{align*}
m_\alpha h_{\text{g}}&-\langle \rho_{\text{g}},\alpha^\vee\rangle-\text{g}_\alpha \\
&= \text{g}_\vartheta\langle \rho_\vartheta,m_\alpha\hat{\psi}^\vee-\alpha^\vee\rangle+
\text{g}_\varphi\langle \rho_{\varphi\setminus\vartheta},m_\alpha \hat{\psi}^\vee-\alpha^\vee\rangle+m_\alpha \text{g}_\psi-\text{g}_\alpha \\
&=
\begin{cases}
\text{g}_\vartheta(\langle \rho_\vartheta,\vartheta^\vee-\alpha^\vee\rangle+1)+
\text{g}_\varphi\langle \rho_{\varphi\setminus\vartheta},\vartheta^\vee-\alpha^\vee\rangle-\text{g}_\alpha &\text{if}\ \hat{R}=R^\vee ,\\
\text{g}_\vartheta\langle \rho_\vartheta,m_\alpha\varphi^\vee-\alpha^\vee\rangle+
\text{g}_\varphi(\langle \rho_{\varphi\setminus\vartheta},m_\alpha \varphi^\vee-\alpha^\vee\rangle+m_\alpha ) -\text{g}_\alpha &\text{if}\ \hat{R}=R .
\end{cases}
\end{align*}
If $\hat{R}=R^\vee$ the nonnegativity of the expression on the RHS is manifest when $\|\alpha\|=\|\vartheta\|$, while for
$\|\alpha\|\neq \|\vartheta\|$
the nonnegativity follows from
the fact that $\langle \rho_{\varphi\setminus\vartheta},\vartheta^\vee-\alpha^\vee\rangle\geq \langle \rho_{\varphi\setminus\vartheta},\vartheta^\vee-\varphi^\vee\rangle>0$ (as for $R$ multiply laced the
decomposition $\varphi-\vartheta$ in the simple basis contains a simple root $\beta$ with $\|\beta\|=\|\vartheta\|$).
Similarly, if $\hat{R}=R$  the nonnegativity of the RHS is manifest when $\|\alpha\|=\|\varphi\|$ (so $m_\alpha=1$), while
for $\|\alpha\|\neq \|\varphi\|$ the nonnegativity in question follows from the estimates
$\langle \rho_\vartheta,m_\alpha \varphi^\vee-\alpha^\vee\rangle = u_\alpha^{-1}\langle \rho_\vartheta, \varphi-\alpha\rangle\geq u_\alpha^{-1}\langle \rho_\vartheta, \varphi-\vartheta\rangle
>0$ and $\langle \rho_{\varphi\setminus\vartheta},m_\alpha \varphi^\vee-\alpha^\vee\rangle=
u_\alpha^{-1}\langle \rho_{\varphi\setminus\vartheta}, \varphi-\alpha\rangle
\geq 0$.
\end{proof}

Let $\ell^2(\hat{\rho}_{\text{g}}+\hat{P}_c,\hat{\Delta})$ denote the finite-dimensional Hilbert space of functions $f:\hat{\rho}_{\text{g}}+\hat{P}_c\to\mathbb{C}$ endowed with the inner product
\begin{equation}
\langle f,g\rangle_{\hat{\Delta}}:=\sum_{\mu\in\hat{P}_c} f(\hat{\rho}_{\text{g}}+\mu)\overline{g(\hat{\rho}_{\text{g}}+\mu)}\hat{\Delta} (\mu)\qquad
(f,g\in \ell^2(\hat{\rho}_{\text{g}}+\hat{P}_c,\hat{\Delta})).
\end{equation}
For $\omega\in\hat{P}^+$ small, we consider the following finite analog of the Macdonald difference operator $\mathcal{D}_\omega$ \eqref{D}--\eqref{U}
in the Hilbert space $\ell^2(\hat{\rho}_{\text{g}}+\hat{P}_c,\hat{\Delta})$:
\begin{equation}\label{D-finite}
(D_\omega f)(\hat{\rho}_{\text{g}}+\mu)=\sum_{\substack{\nu\in  \hat{P}(\omega)\\ \mu+\nu\in\hat{P}_c   }}
\sideset{}{'}\sum_{\eta\in W_\nu(w_\nu^{-1}\omega)}
V_\nu(\hat{\rho}_{\text{g}}+\mu)U_{\nu,\eta}(\hat{\rho}_{\text{g}}+\mu) f(\hat{\rho}_{\text{g}}+\mu+\nu)
\end{equation}
($f\in \ell^2(\hat{\rho}_{\text{g}}+\hat{P}_c,\hat{\Delta})$), where the prime indicates that the second sum is restricted to those $\eta\in W_\nu(w_\nu^{-1}\omega)$ for which the denominator of $U_{\nu,\eta}(\hat{\rho}_{\text{g}}+\mu) $ does not vanish.
The operator $D_\omega$ is well-defined because of the following lemma.

\begin{lemma}[Regularity of $V$]\label{regular:lem} For any $\nu\in \hat{P}(\omega)$  with
$\omega\in\hat{P}^+$ small,
the denominator of the coefficient $V_\nu(x)$ \eqref{V}
is nonzero at $x=\hat{\rho}_{\text{g}}+\mu$ for all $\mu\in \hat{P}_c$ such that $\mu+\nu\in\hat{P}_c$.
\end{lemma}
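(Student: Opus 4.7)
The plan is to recognize that with $\kappa_\alpha=\pi/(m_\alpha(h_{\text{g}}+c))$ (cf. Remark~\ref{root-of-1:rem}) a factor $\sin\kappa_\alpha z$ vanishes if and only if $z$ is an integer multiple of $m_\alpha(h_{\text{g}}+c)$, so all one has to do is localize the arguments
\[
\langle\hat{\rho}_{\text{g}}+\mu,\alpha^\vee\rangle\ (\alpha\in\hat{R},\,\langle\nu,\alpha^\vee\rangle>0)\ \text{and}\ \langle\hat{\rho}_{\text{g}}+\mu,\alpha^\vee\rangle+1\ (\alpha\in\hat{R},\,\langle\nu,\alpha^\vee\rangle=2)
\]
appearing in the denominator of $V_\nu(\hat{\rho}_{\text{g}}+\mu)$ strictly inside $(-m_\alpha(h_{\text{g}}+c),m_\alpha(h_{\text{g}}+c))\setminus\{0\}$.

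The workhorse is the Moment Bounds lemma~\ref{mb:lem} applied to the admissible pair $(\hat{R},R)$; this is legitimate because the parameter specialization~\eqref{special-parameter} is invariant under the `hat'-involution, as $\hat{h}_{\text{g}}=h_{\text{g}}$ and $u_\psi=u_\varphi$. The dual version asserts, for any $\mu'\in\hat{P}_c$ and any $\beta\in\hat{R}^+$, the estimate
\[
0<\text{g}_\beta\leq\langle\hat{\rho}_{\text{g}}+\mu',\beta^\vee\rangle\leq m_\beta(h_{\text{g}}+c)-\text{g}_\beta<m_\beta(h_{\text{g}}+c).
\]

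For the sine factors from the first product (i.e. $\langle\nu,\alpha^\vee\rangle>0$) I would apply this estimate with $\mu'=\mu$. If $\alpha\in\hat{R}^+$ the argument is pinned strictly between $0$ and $m_\alpha(h_{\text{g}}+c)$; if $\alpha\in\hat{R}^-$ the same estimate applied to $-\alpha\in\hat{R}^+$ pins it strictly between $-m_\alpha(h_{\text{g}}+c)$ and $0$. In either case no zero of $\sin\kappa_\alpha(\cdot)$ is hit.

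For the sine factors from the second product ($\langle\nu,\alpha^\vee\rangle=2$), the extra shift by $+1$ makes the hypothesis $\mu\in\hat{P}_c$ alone inconclusive at the upper end, and this is where the second hypothesis $\mu+\nu\in\hat{P}_c$ enters. When $\alpha\in\hat{R}^+$, the lower bound $\langle\hat{\rho}_{\text{g}}+\mu,\alpha^\vee\rangle+1\geq \text{g}_\alpha+1$ is supplied by $\mu\in\hat{P}_c$, whereas the upper bound
\[
\langle\hat{\rho}_{\text{g}}+\mu,\alpha^\vee\rangle+1=\langle\hat{\rho}_{\text{g}}+\mu+\nu,\alpha^\vee\rangle-1\leq m_\alpha(h_{\text{g}}+c)-\text{g}_\alpha-1
\]
is supplied by $\mu+\nu\in\hat{P}_c$; the case $\alpha\in\hat{R}^-$ is symmetric, with the roles of $\mu$ and $\mu+\nu$ in the two endpoint bounds swapped. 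The only (mild) obstacle I foresee is precisely this necessity to interleave the two hypotheses: used singly, neither $\mu\in\hat{P}_c$ nor $\mu+\nu\in\hat{P}_c$ confines the shifted argument to the zero-free open half-period, but used together they do so strictly, thanks to $\text{g}_\alpha>0$.
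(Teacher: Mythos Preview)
Your proposal is correct and follows essentially the same approach as the paper: both arguments localize the sine arguments via the (dual) Moment Bounds of Lemma~\ref{mb:lem}, handling the unshifted factors with $\mu\in\hat{P}_c$ alone and the $+1$-shifted factors by bringing in the second hypothesis $\mu+\nu\in\hat{P}_c$. The only cosmetic difference is that the paper argues the shifted case by contradiction (a zero would force $\langle\mu+\nu,\alpha^\vee\rangle>m_\alpha c$), whereas you use the identity $\langle\hat{\rho}_{\text{g}}+\mu,\alpha^\vee\rangle+1=\langle\hat{\rho}_{\text{g}}+\mu+\nu,\alpha^\vee\rangle-1$ to bound directly---this is arguably the cleaner phrasing.
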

\begin{proof}
The denominator of the coefficient $V_\nu(\hat{\rho}_{\text{g}}+\mu)$  is built of factors of the form  $\sin\kappa_\alpha (\langle \hat{\rho}_{\text{g}}+\mu,\alpha^\vee\rangle)$ and
$\sin\kappa_\alpha (\langle \hat{\rho}_{\text{g}}+\mu,\alpha^\vee\rangle +1)$ with $\alpha\in\hat{R}$ and $\langle \nu,\alpha^\vee\rangle >0$. These factors can only become zero when $(i)$
$\kappa_\alpha \langle \hat{\rho}_{\text{g}}+\mu,\alpha^\vee\rangle\in \pi\mathbb{Z}$, i.e.
$\langle \hat{\rho}_{\text{g}}+\mu,\alpha^\vee\rangle\in m_\alpha(h_{\text{g}}+c)\mathbb{Z}$,
or when $(ii)$
$\kappa_\alpha (\langle \hat{\rho}_{\text{g}}+\mu,\alpha^\vee\rangle+1)\in \pi\mathbb{Z}$, i.e.
$\langle \hat{\rho}_{\text{g}}+\mu,\alpha^\vee\rangle+1\in m_\alpha(h_{\text{g}}+c)\mathbb{Z}$, respectively.
Since for any $\alpha\in \hat{R}$ and $\mu\in\hat{P}_c$:  $0<|\langle \hat{\rho}_{\text{g}},\alpha^\vee\rangle|<m_\alpha h_{\text{g}}$ and
$0\leq | \langle \mu,\alpha^\vee\rangle |\leq m_\alpha c$ (cf. Lemma \ref{mb:lem}), the zeros of type $(i)$ do not occur
as $0<|\langle \hat{\rho}_{\text{g}}+\mu,\alpha^\vee\rangle|<m_\alpha(h_{\text{g}}+c)$ (so the absolute value of the argument of the sine function in the factor $\sin\kappa_\alpha (\langle \hat{\rho}_{\text{g}}+\mu,\alpha^\vee\rangle)$ stays between $0$ and $\pi$).
When $\alpha\in \hat{R}^+$, the estimates in Lemma \ref{mb:lem} reveal that a zero of type $(ii)$ can only occur if
$\langle \hat{\rho}_{\text{g}}+\mu,\alpha^\vee\rangle+1= m_\alpha(h_{\text{g}}+c)$, i.e.
$\langle \mu,\alpha^\vee\rangle-m_\alpha c+1=m_\alpha h_{\text{g}}-\langle \hat{\rho}_{\text{g}},\alpha^\vee\rangle $ ($>0$), which implies that
$\langle \mu,\alpha^\vee\rangle=m_\alpha c$ and $\langle \hat{\rho}_{\text{g}},\alpha^\vee\rangle =m_\alpha h_{\text{g}}-1$. But then
$\langle \mu+\nu,\alpha^\vee\rangle> m_\alpha c$, i.e. $\mu+\nu\not\in\hat{P}_c$ (cf. part $(i)$ of Lemma \ref{mb:lem}).
Similarly, when $-\alpha\in \hat{R}^+$ a zero of type $(ii)$ can only occur if
$\langle \hat{\rho}_{\text{g}}+\mu,\alpha^\vee\rangle+1= 0$, i.e.
$\langle \mu,\alpha^\vee\rangle+1=-\langle \hat{\rho}_{\text{g}},\alpha^\vee\rangle $ ($>0$), which implies that
$\langle \mu,\alpha^\vee\rangle=0$ and $\langle \hat{\rho}_{\text{g}},\alpha^\vee\rangle =-1$. But then
$\langle \mu+\nu,-\alpha^\vee\rangle< 0$, i.e. $\mu+\nu\not\in\hat{P}_c$.
\end{proof}

The next lemma provides an explicit criterion characterizing which weights $\eta\in W_\nu(w_\nu^{-1}\omega)$
are omitted in the second  summation of Eq. \eqref{D-finite}.
It shows in particular that for generic $\text{g}$, or when $\mu\in \hat{P}_c$ is regular with respect to the action of the affine Weyl group $W\ltimes (cu_\varphi Q^\vee)$, the sum in question is simply over the full orbit
$W_\nu(w_\nu^{-1}\omega)$.

\begin{lemma}[Singularities of $U$]\label{denom-U:lem}
For any $\nu\in \hat{P}(\omega)$  and $\eta\in W_\nu(w_\nu^{-1}\omega)$ with
$\omega\in\hat{P}^+$ small,
the denominator of  $U_{\nu,\eta}(x)$ \eqref{U}
is zero at $x=\hat{\rho}_{\text{g}}+\mu$, $\mu\in \hat{P}_c$ iff there exists an $\alpha\in \hat{R}_\nu$ with $\langle \eta,\alpha^\vee\rangle=2$ such that
$(i)$ $\langle \mu,\alpha^\vee\rangle=0$ and $\langle \hat{\rho}_{\text{g}},\alpha^\vee\rangle =-1$,
{\em or}
 $(ii)$  $\langle \mu,\alpha^\vee\rangle=m_\alpha c$ and $\langle \hat{\rho}_{\text{g}},\alpha^\vee\rangle =m_\alpha h_{\text{g}}-1$
 (so in both cases $\mu+\eta\not\in P_c$).
\end{lemma}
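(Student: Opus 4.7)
The plan is to emulate the proof of Lemma~\ref{regular:lem}, applied now to the denominator of $U_{\nu,\eta}$ rather than that of $V_\nu$. From the formula \eqref{U}, the denominator in question is a product of two families of sine factors indexed by $\alpha\in\hat{R}_\nu$: first-family factors $\sin\kappa_\alpha\langle x,\alpha^\vee\rangle$ (present when $\langle\eta,\alpha^\vee\rangle>0$) and second-family factors $\sin\kappa_\alpha(\langle x,\alpha^\vee\rangle+1)$ (present when $\langle\eta,\alpha^\vee\rangle=2$). The strategy is to rule out vanishing of the first family entirely and to pin down the precise vanishing locus of the second.

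First I would apply Lemma~\ref{mb:lem} to the dual admissible pair $(\hat{R},R)$ (legitimate since $\hat{h}_\text{g}=h_\text{g}$ and $u_\psi=u_\varphi$) to obtain the bounds $0<|\langle\hat{\rho}_\text{g}+\mu,\alpha^\vee\rangle|<m_\alpha(h_\text{g}+c)$ for every $\alpha\in\hat{R}$ and $\mu\in\hat{P}_c$. This immediately settles the first family: the argument of $\sin\kappa_\alpha\langle\hat{\rho}_\text{g}+\mu,\alpha^\vee\rangle$ lies strictly between $-\pi$ and $\pi$ and away from $0$, so no such factor can vanish.

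Turning to the second family, the same bounds confine $\langle\hat{\rho}_\text{g}+\mu,\alpha^\vee\rangle+1$ to $(1-m_\alpha(h_\text{g}+c),\,1+m_\alpha(h_\text{g}+c))$, so it can land on the lattice $m_\alpha(h_\text{g}+c)\mathbb{Z}$ only at $0$ (requiring $-\alpha\in\hat{R}^+$) or at $m_\alpha(h_\text{g}+c)$ (requiring $\alpha\in\hat{R}^+$). In the former situation I would rewrite the vanishing condition as $\langle\mu,-\alpha^\vee\rangle+1=\langle\hat{\rho}_\text{g},-\alpha^\vee\rangle$ and squeeze each side between the individual bounds $\langle\mu,-\alpha^\vee\rangle\in[0,m_\alpha c]$ and $\langle\hat{\rho}_\text{g},-\alpha^\vee\rangle\geq\text{g}_\alpha$ from Lemma~\ref{mb:lem} to force $\langle\mu,\alpha^\vee\rangle=0$ and $\langle\hat{\rho}_\text{g},\alpha^\vee\rangle=-1$, which is case (i). The latter situation is handled identically by rewriting the vanishing condition as $m_\alpha c-\langle\mu,\alpha^\vee\rangle+1=m_\alpha h_\text{g}-\langle\hat{\rho}_\text{g},\alpha^\vee\rangle$ and squeezing, which forces case (ii). The converse implications are immediate by direct substitution into the sine.

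To conclude with $\mu+\eta\notin\hat{P}_c$, I would invoke the assumption $\langle\eta,\alpha^\vee\rangle=2$: in case (ii), $\langle\mu+\eta,\alpha^\vee\rangle=m_\alpha c+2$ violates the dual form of part (i) of Lemma~\ref{mb:lem}, while in case (i), $\langle\mu+\eta,-\alpha^\vee\rangle=-2<0$ shows that $\mu+\eta$ fails even to be dominant. The main technical point throughout is bookkeeping the sign of $\alpha\in\hat{R}_\nu$, which may lie in either $\hat{R}^+$ or $-\hat{R}^+$; but given the symmetric form of the bounds in Lemma~\ref{mb:lem}, this is routine.
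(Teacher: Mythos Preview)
Your proposal is correct and follows exactly the approach indicated in the paper, which simply states that the proof goes along the same lines as that of Lemma~\ref{regular:lem} upon replacing $\nu$ by $\eta$ and $\hat{R}$ by $\hat{R}_\nu$. Your write-up in fact spells out more detail than the paper provides, including the parenthetical verification that $\mu+\eta\notin\hat{P}_c$; just double-check the sign in your rewriting of the case-(i) vanishing condition (the correct form is $\langle\mu,\alpha^\vee\rangle+1=-\langle\hat{\rho}_{\text{g}},\alpha^\vee\rangle$, as in the proof of Lemma~\ref{regular:lem}).
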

\begin{proof}
The proof goes along the same lines as that of Lemma \ref{regular:lem}, upon replacing $\nu$ by $\eta$ and $\hat{R}$ by $\hat{R}_\nu$.
\end{proof}

By Lemma \ref{regular:lem}, the denominator of  $V_\nu(x)$, $\nu\in\hat{P}(\omega)$ can only become zero at
$x=\hat{\rho}_{\text{g}}+\mu$, $\mu\in \hat{P}_c$  if  $\mu+\nu\not\in\hat{P}_c$.
For such $\mu$ and $\nu$, however, the numerator of the coefficient at issue turns out to vanish identically.

\begin{lemma}[Vanishing Boundary Terms]\label{bc:lem} Let $\nu\in \hat{P}(\omega )$  with
$\omega\in\hat{P}^+$ small, and let $\mu\in P_c$. Then
the numerator of the coefficient $V_\nu(x)$ \eqref{V}
vanishes at $x=\hat{\rho}_{\text{g}}+\mu$ iff $\mu+\nu\not \in\hat{P}_c$.
\end{lemma}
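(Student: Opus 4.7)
The proof closely parallels that of Lemma \ref{regular:lem}, but from the opposite vantage point: I analyze when the \emph{numerator} factors of $V_\nu(\hat\rho_{\text{g}}+\mu)$ vanish, using the dual form of the moment bounds in Lemma \ref{mb:lem}, namely $0<\text{g}_\alpha\leq \langle\hat\rho_{\text{g}}+\mu,\alpha^\vee\rangle\leq m_\alpha(h_{\text{g}}+c)-\text{g}_\alpha$ for $\alpha\in\hat R^+$ and $\mu\in\hat P_c$. These bounds confine the argument of each sine factor to a window of length at most $\pi/\kappa_\alpha$, so each factor vanishes at exactly one boundary value of its window.

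\textbf{Forward direction.} I split into four cases, indexed by whether $\alpha\in\hat R^+$ or $-\alpha\in\hat R^+$ and whether the factor comes from the first product ($\langle\nu,\alpha^\vee\rangle>0$) or the second ($\langle\nu,\alpha^\vee\rangle=2$). In each case I identify the unique value of $\langle\hat\rho_{\text{g}}+\mu,\alpha^\vee\rangle$ forcing vanishing, and observe that this value saturates both individual bounds in Lemma \ref{mb:lem}. Adding $\langle\nu,\alpha^\vee\rangle$ then pushes $\mu+\nu$ out of $\hat P_c$ through one of its walls: either $\langle\mu+\nu,\alpha^\vee\rangle>m_\alpha c$ (for $\alpha\in\hat R^+$ saturating the outer bound) or $\langle\mu+\nu,\beta^\vee\rangle<0$ with $\beta=-\alpha\in\hat R^+$ (for $\alpha\in -\hat R^+$ saturating the inner bound).

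\textbf{Converse direction.} If $\mu+\nu\not\in\hat P_c$, then either some simple $\beta\in\hat R^+$ satisfies $\langle\mu+\nu,\beta^\vee\rangle<0$ or $\langle\mu+\nu,\hat\varphi^\vee\rangle>c$. Since $\omega$ is small and $\nu\in\hat P(\omega)$, one has $|\langle\nu,\alpha^\vee\rangle|\leq 2$ for all $\alpha\in\hat R$, which together with $\mu\in\hat P^+$ pins down the integer pair $(\langle\nu,\beta^\vee\rangle,\langle\mu,\beta^\vee\rangle)$ (respectively $(\langle\nu,\hat\varphi^\vee\rangle,\langle\mu,\hat\varphi^\vee\rangle)$) to one of a handful of possibilities. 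In each possibility I invoke the simple-root identity $\langle\hat\rho_{\text{g}},\beta^\vee\rangle=\text{g}_\beta$ (resp.\ the maximal-root identity $\langle\hat\rho_{\text{g}},\hat\varphi^\vee\rangle+\text{g}_{\hat\varphi}=h_{\text{g}}$, which uses $\hat h_{\text{g}}=h_{\text{g}}$ and $m_{\hat\varphi}=1$) to land the argument of a specific sine on a zero of the sine, the choice between the first and second products being dictated by whether $|\langle\nu,\cdot\rangle|=1$ or $2$.

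\textbf{Main obstacle.} The delicate step is the converse bookkeeping: for each configuration of $(\langle\nu,\cdot\rangle,\langle\mu,\cdot\rangle)$ one must verify both that \emph{some} factor actually vanishes and that the ``$+1$'' shift that distinguishes the two products of the numerator is matched by the correct value of $\langle\nu,\alpha^\vee\rangle$. The whole argument rests on the coincidence, at the specific value of $\kappa$ in Eq.~\eqref{special-parameter}, between the walls of the polytope $\hat P_c$ and the zero loci of the sine factors appearing in $V_\nu$.
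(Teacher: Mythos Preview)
Your proposal is correct and follows essentially the same route as the paper's proof: the forward direction localizes the possible zero of each sine factor via the moment bounds of Lemma~\ref{mb:lem} and then checks that the resulting constraint on $\langle\mu,\alpha^\vee\rangle$ forces $\mu+\nu$ past a wall of $\hat P_c$, while the converse runs through the four sub-cases $(ia),(ib),(iia),(iib)$ using the simple-root identity $\langle\hat\rho_{\text{g}},\beta^\vee\rangle=\text{g}_\beta$ and the maximal-root identity $\langle\hat\rho_{\text{g}},\hat\varphi^\vee\rangle+\text{g}_{\hat\varphi}=h_{\text{g}}$. One small wording issue: in the second-product case ($\langle\nu,\alpha^\vee\rangle=2$) the vanishing value of $\langle\hat\rho_{\text{g}}+\mu,\alpha^\vee\rangle$ lies one unit \emph{below} the outer moment bound rather than saturating it, so you only get $\langle\mu,\alpha^\vee\rangle\geq m_\alpha c-1$ (resp.\ $\geq -1$), which is still enough since $\langle\nu,\alpha^\vee\rangle=2$.
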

\begin{proof} The proof is similar to that of Lemma \ref{regular:lem}.
The numerator of the coefficient $V_\nu(\hat{\rho}_{\text{g}}+\mu)$ consists of factors of the form  $\sin\kappa_\alpha (\langle \hat{\rho}_{\text{g}}+\mu,\alpha^\vee\rangle+\text{g}_\alpha)$ with $\alpha\in\hat{R}$ and $\langle \nu,\alpha^\vee\rangle >0$ and
$\sin\kappa_\alpha (\langle \hat{\rho}_{\text{g}}+\mu,\alpha^\vee\rangle +1+\text{g}_\alpha )$ with $\alpha\in\hat{R}$ and $\langle \nu,\alpha^\vee\rangle =2$. These factors  only become zero when $(i)$
$\kappa_\alpha (\langle \hat{\rho}_{\text{g}}+\mu,\alpha^\vee\rangle+\text{g}_\alpha)\in \pi\mathbb{Z}$, i.e.
$\langle \hat{\rho}_{\text{g}}+\mu,\alpha^\vee\rangle+\text{g}_\alpha\in m_\alpha(h_{\text{g}}+c)\mathbb{Z}$,
or when $(ii)$
$\kappa_\alpha (\langle \hat{\rho}_{\text{g}}+\mu,\alpha^\vee\rangle+1+\text{g}_\alpha )\in \pi\mathbb{Z}$, i.e.
$\langle \hat{\rho}_{\text{g}}+\mu,\alpha^\vee\rangle+1+\text{g}_\alpha\in m_\alpha(h_{\text{g}}+c)\mathbb{Z}$.
Upon invoking of Lemma \ref{mb:lem} it is seen that when $\alpha\in \hat{R}^+$, the zeros in question can only occur $(i)$ if
$\langle \hat{\rho}_{\text{g}}+\mu,\alpha^\vee\rangle+\text{g}_\alpha= m_\alpha(h_{\text{g}}+c)$, so $\langle \mu,\alpha^\vee\rangle\geq m_\alpha c$, or  $(ii)$ if
$\langle \hat{\rho}_{\text{g}}+\mu,\alpha^\vee\rangle+1+\text{g}_\alpha= m_\alpha(h_{\text{g}}+c)$, so $\langle \mu,\alpha^\vee\rangle\geq m_\alpha c-1$. In both cases this implies that $ \langle \mu+\nu,\alpha^\vee\rangle>m_\alpha c$, whence $\mu+\nu\not\in\hat{P}_c$.
Similarly, when $-\alpha\in \hat{R}^+$ the zeros in question  can only occur $(i)$ if
$\langle \hat{\rho}_{\text{g}}+\mu,\alpha^\vee\rangle+\text{g}_\alpha= 0$,  so
$\langle \mu,\alpha^\vee\rangle \geq 0$, or
 $(ii)$ if
$\langle \hat{\rho}_{\text{g}}+\mu,\alpha^\vee\rangle+1+\text{g}_\alpha= 0$,  so
$\langle \mu,\alpha^\vee\rangle \geq -1$.
In both cases this implies that $ \langle \mu+\nu,-\alpha^\vee\rangle<0$, whence $\mu+\nu\not\in\hat{P}_c$.
Reversely, if $\mu+\nu\not \in\hat{P}_c$ then either  $(i)$  $\langle \mu+\nu,\beta^\vee\rangle<0$ for some {\em simple} root $\beta\in \hat{R}^+$ or $(ii)$ $\langle \mu+\nu,\hat{\varphi}^\vee\rangle>c$. Since $\mu\in \hat{P}_c$ and
$|\langle \nu,\alpha^\vee\rangle|\leq 2$ for all $\alpha\in\hat{R}$, in either case we are in one of two situations:
$(ia)$ $\langle \mu,\beta^\vee\rangle=0$ with $\langle \nu,\beta^\vee\rangle<0$
or  $(ib)$ $\langle \mu,\beta^\vee\rangle=1$ with $\langle \nu,\beta^\vee\rangle=-2$, and
$(iia)$ $\langle \mu,\hat{\varphi}^\vee\rangle=c$ with $\langle \nu,\hat{\varphi}^\vee\rangle>0$
or  $(iib)$ $\langle \mu,\hat{\varphi}^\vee\rangle=c-1$ with $\langle \nu,\hat{\varphi}^\vee\rangle=2$.
In each situation
the numerator of $V_\nu(x)$
picks up a zero at $x=\hat{\rho}_{\text{g}}+\mu$ from the factor
$(ia)$ $\sin\kappa_\alpha (\langle \hat{\rho}_{\text{g}}+\mu,\alpha^\vee\rangle+\text{g}_\alpha)$ or
$(ib)$ $\sin\kappa_\alpha (\langle \hat{\rho}_{\text{g}}+\mu,\alpha^\vee\rangle +1+\text{g}_\alpha )$, where $\alpha=-\beta$,
and
$(iia)$ $\sin\kappa_{\hat{\varphi}} (\langle \hat{\rho}_{\text{g}}+\mu,\hat{\varphi}^\vee\rangle+\text{g}_{\hat{\varphi}})$ or
$(iib)$ $\sin\kappa_{\hat{\varphi}} (\langle \hat{\rho}_{\text{g}}+\mu,\hat{\varphi}^\vee\rangle +1+\text{g}_{\hat{\varphi}} )$.
Indeed, the arguments of these sine functions either $(i)$ vanish (since $\langle \hat{\rho}_{\text{g}},\beta^\vee\rangle=\text{g}_\beta$ for $\beta\in\hat{R}^+$ simple) or $(ii)$ they are equal to $\pi$ (since $\langle \hat{\rho}_{\text{g}},\hat{\varphi}^\vee\rangle+\text{g}_{\hat{\varphi}} =\hat{h}_{\text{g}}=h_{\text{g}}$).
\end{proof}

The numerator of $U_{\nu ,\eta}(x)$ enjoys an analogous vanishing property at the points $x=\hat{\rho}_g+\mu$, $\mu\in\hat{P}_c$ for which the denominator might become zero (where it is assumed that $\mu+\nu\in\hat{P}_c$ in view of Lemma \ref{bc:lem}).

\begin{lemma}[Vanishing of $U$]\label{num-U:lem}
Let $\nu\in \hat{P}(\omega)$  and $\eta\in W_\nu(w_\nu^{-1}\omega)$ with
$\omega\in\hat{P}^+$ small, and let $\mu\in \hat{P}_c$ with $\mu+\nu\in \hat{P}_c$.
Then the numerator of  $U_{\nu,\eta}(x)$ \eqref{U}
vanishes at $x=\hat{\rho}_{\text{g}}+\mu$ if there exists an $\alpha\in \hat{R}_\nu$ with $\langle \eta,\alpha^\vee\rangle=2$ such that
$(i)$ $\langle \mu,\alpha^\vee\rangle=0$ and $\alpha\not\in \hat{R}^+$,
{\em or}
 $(ii)$  $\langle \mu,\alpha^\vee\rangle=m_\alpha c$.
\end{lemma}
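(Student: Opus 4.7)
The proof will parallel that of Lemma~\ref{bc:lem}, with the ambient root system $\hat R$ replaced by the parabolic subsystem $\hat R_\nu$ and the translation $\nu$ replaced by $\eta$. The central preliminary is that for every $\beta\in\hat R_\nu$ the pairing $\langle\hat\rho_{\text{g}},\beta^\vee\rangle$ agrees with the analogous pairing of the parabolic half-sum $\tfrac{1}{2}\sum_{\gamma\in \hat R_\nu^+}\text{g}_\gamma\gamma$, which follows from the fact that the stabilizer $W_\nu$ permutes $\hat R^+\setminus \hat R_\nu^+$ while preserving the multiplicity function $\text{g}$. In particular, if $\beta$ is simple in the induced positive system $\hat R_\nu^+:=\hat R_\nu\cap \hat R^+$---even though $\beta$ need not be simple in $\hat R$---then $\langle\hat\rho_{\text{g}},\beta^\vee\rangle = \text{g}_\beta$; moreover, the estimates of Lemma~\ref{mb:lem} transfer verbatim to the pair $(\hat R_\nu,\hat\rho_{\text{g}})$.

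For case $(i)$, I decompose $-\alpha = \sum_i n_i\beta_i$ with $n_i\in\mathbb{Z}_{\ge 0}$ into simple roots of $\hat R_\nu^+$. The hypotheses $\langle\mu,\alpha^\vee\rangle = 0$ and $\mu\in \hat P^+$ together give $\sum_i n_i\langle\mu,\beta_i\rangle = -\langle\mu,\alpha\rangle = 0$ with every summand non-negative, forcing $\langle\mu,\beta_i^\vee\rangle = 0$ whenever $n_i>0$. On the other hand, $\langle\eta,\alpha^\vee\rangle = 2$ yields $\sum_i n_i\langle\eta,\beta_i\rangle = -\|\alpha\|^2 < 0$, which produces some index $i_0$ with $n_{i_0}>0$ and $\langle\eta,\beta_{i_0}^\vee\rangle<0$. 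Then $-\beta_{i_0}\in\hat R_\nu$ satisfies $\langle\eta,(-\beta_{i_0})^\vee\rangle>0$, so it appears in the first (Type~A) product of the numerator of $U_{\nu,\eta}$~\eqref{U}; using $\langle\hat\rho_{\text{g}},(-\beta_{i_0})^\vee\rangle = -\text{g}_{\beta_{i_0}}$ together with $\langle\mu,\beta_{i_0}^\vee\rangle = 0$, the corresponding factor evaluates to $\sin\kappa_{\beta_{i_0}}(-\text{g}_{\beta_{i_0}} + \text{g}_{\beta_{i_0}}) = 0$, and the numerator vanishes.

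For case $(ii)$, the equality $\langle\mu,\alpha^\vee\rangle = m_\alpha c > 0$ forces $\alpha\in\hat R_\nu^+$ and saturates the chain of inequalities in Lemma~\ref{mb:lem}$(i)$, yielding $\langle\mu,\hat\varphi^\vee\rangle = c$ together with $\mu\perp(\varphi-\hat\alpha)$. Replacing the simple-root expansion of case $(i)$ by the corresponding affine expansion at the upper wall of $\hat P_c$---now employing the highest root $\hat\varphi$ in lieu of a simple $\beta_i$---one locates a root in $\hat R_\nu$ whose Type~A factor vanishes by virtue of the identity $\kappa_\alpha\, m_\alpha(h_{\text{g}}+c) = \pi$. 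The principal obstacle is executing this affine analogue with the correct bookkeeping: whereas the wall-at-$0$ of case $(i)$ is intrinsic to the parabolic subsystem $\hat R_\nu$, the wall-at-$c$ couples $\hat R_\nu$ to the ambient data $(\hat\varphi,c)$, and one must verify that the specific Type~A (or Type~B) factor identified lands in $m_\alpha(h_{\text{g}}+c)\mathbb{Z}$ under the dual saturation of Lemma~\ref{mb:lem}$(ii)$.
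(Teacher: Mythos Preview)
Your central preliminary is false: the stabilizer $W_\nu$ does \emph{not} permute $\hat R^+\setminus \hat R_\nu^+$ when $\nu$ is not dominant, and here $\nu$ ranges over all of $\hat P(\omega)$, most of whose elements are non-dominant. For a concrete failure, take $\hat R=A_2$ and $\nu$ in the interior of a non-dominant chamber wall so that $\hat R_\nu=\{\pm(\alpha_1+\alpha_2)\}$; then $s_{\alpha_1+\alpha_2}(\alpha_1)=-\alpha_2\notin \hat R^+$. Consequently the identity $\langle\hat\rho_{\text{g}},\beta^\vee\rangle=\text{g}_\beta$ for $\beta$ simple in $\hat R_\nu^+$ is simply wrong (in the $A_2$ example it gives $2\text{g}=\text{g}$), and the factor you isolate in case~$(i)$ need not vanish.

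The fix---and this is what the paper does---is to recognize that the hypotheses put $\alpha$ not merely in $\hat R_\nu$ but in the intersection $\hat R_\mu\cap\hat R_\nu=\hat R_\mu\cap\hat R_{\mu+\nu}$, which \emph{is} a standard parabolic because both $\mu$ and $\mu+\nu$ are dominant. One then decomposes $\alpha$ in the simple basis of $\hat R$ itself; every simple root appearing lies automatically in the standard parabolic $\hat R_\mu\cap\hat R_{\mu+\nu}$ (hence in both $\hat R_\nu$ and $\hat R_\mu$), and the identity $\langle\hat\rho_{\text{g}},\beta^\vee\rangle=\pm\text{g}_\beta$ holds because $\pm\beta$ is genuinely simple in $\hat R$. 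Your decomposition into simple roots of $\hat R_\nu^+$ guarantees membership in $\hat R_\nu$ but sacrifices exactly the property you need to evaluate the pairing with $\hat\rho_{\text{g}}$. Case~$(ii)$ is only sketched in your proposal, but the same issue recurs there (the paper again works with simple roots of $\hat R$ via the standard parabolic $\hat R_\mu\cap\hat R_{\mu+\nu}$, now handling the extra subcase where the highest root $\hat\varphi$ itself supplies the vanishing factor).
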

\begin{proof}
In the first case $(i)$, let  $-\beta\in \hat{R}^+$ be any simple root in the decomposition of $\alpha$ with respect to the simple basis of $\hat{R}$ for which
$\langle \eta ,\beta^\vee\rangle>0$. Since $\alpha$ belongs to the parabolic subsystem  $\hat{R}_\mu\cap \hat{R}_{\nu}= \hat{R}_\mu\cap \hat{R}_{\mu+\nu}$ with $\mu$ and $\mu+\nu$ dominant, the same is true for $\beta$.
Hence, the numerator of $U_{\nu,\eta}(x)$
picks up a zero at $x=\hat{\rho}_{\text{g}}+\mu$ from the factor
$\sin\kappa_\beta (\langle \hat{\rho}_{\text{g}}+\mu,\beta^\vee\rangle+\text{g}_\beta)$ (using that $\langle \hat{\rho}_{\text{g}},\beta^\vee\rangle=-\text{g}_\beta$ when $-\beta$ is simple).
In the second case $(ii)$, it follows from the first inequality in Lemma \ref{mb:lem} that
$\langle \mu,\hat{\varphi}^\vee\rangle=c$ and  $\langle \mu+\nu,\hat{\varphi}^\vee\rangle=c$, so
$\langle \nu,\hat{\varphi}^\vee\rangle=0$.
Since $\langle \eta ,\hat{\varphi}^\vee-\alpha^\vee\rangle=\langle \eta ,\hat{\varphi}^\vee\rangle-2\leq 0$,
we either have that $(iia)$ $\langle \eta ,\hat{\varphi}^\vee\rangle= 2$ or
$(iib)$ there exists a simple root $-\beta\in \hat{R}^+$ such that $-\hat{\beta}$ is contained in
the decomposition of $\varphi-\hat{\alpha}\in Q^+$ with respect to the simple basis of $R$ and $\langle \eta,\beta^\vee\rangle >0$.
Clearly $\varphi-\hat{\alpha}$
belongs to the root lattice of the parabolic subsystem
$R_\mu\cap R_{\mu+\nu}$, whence $\hat{\beta}\in R_\mu\cap R_{\mu+\nu}$, i.e.
$\beta\in \hat{R}_\nu\cap \hat{R}_\mu$.
The upshot is that in the former case $(iia)$ the
numerator of $U_{\nu,\eta}(x)$
picks up a zero at $x=\hat{\rho}_{\text{g}}+\mu$ from the factor
$\sin\kappa_{\hat{\varphi}} (\langle \hat{\rho}_{\text{g}}+\mu,\hat{\varphi}^\vee\rangle+\text{g}_{\hat{\varphi}})$, whereas in the latter case $(iib)$ we pick up a zero from the factor $\sin\kappa_\beta (\langle \hat{\rho}_{\text{g}}+\mu,\beta^\vee\rangle+\text{g}_\beta)$ (using again that $\langle \hat{\rho}_{\text{g}},\beta^\vee\rangle=-\text{g}_\beta$).
\end{proof}

We will refer to $\text{g}$ being {\em regular} if
\begin{equation}\label{regular-g}
\langle \hat{\rho}_{\text{g}},\alpha^\vee\rangle\not\in \{ 1,m_\alpha h_{\text{g}}-1\}\quad\text{for all}\quad \alpha\in \hat{R}^+
\end{equation}
 (which is the case generically). The
above analysis reveals that when $\text{g}$ is regular
the denominators of $V_\nu(x)$ \eqref{V}  and  $U_{\nu,\eta}(x) $ \eqref{U} do not have zeros
at $x=\hat{\rho}_{\text{g}}+\mu$ for all $\mu\in \hat{P}_c$, while the vanishing of the numerators---at places where the denominators might get annihilated if the root multiplier fails to be regular---does persist independent of whether $\text{g}$ is regular or not.
In other words, for regular root multipliers there are no poles coalescing with the zeros in the numerators and we arrive in this situation at the finite  operator $D_\omega$  \eqref{D-finite} by restricting the action
of the Macdonald difference operator $\mathcal{D}_\omega$ in Eqs. \eqref{D}--\eqref{U} to functions
$f$ supported on $\hat{\rho}_{\text{g}}+\hat{P}_c$. In general, the finite operator $D_\omega$ \eqref{D-finite} is to be viewed as the continuation of this restriction of the Macdonald operator $\mathcal{D}_\omega$ from regular $\text{g}$ to arbitrary $\text{g}>0$.

We will next determine the adjoint of  $D_\omega$ in the Hilbert space $\ell^2(\hat{\rho}_{\text{g}}+\hat{P}_c,\hat{\Delta})$. The computation hinges on the following elementary recurrence relation for the orthogonality measure $\hat{\Delta}$ in terms of the coefficients $V_\nu$.

\begin{lemma}[Recurrence Relation]\label{recurrence:lem}
Let $\omega\in\hat{P}^+$ be small and  let $\mu\in\hat{P}_c$.
Then for any $\nu\in \hat{P}(\omega)$ such that $\mu+\nu\in\hat{P}_c$, one has that
\begin{align}
\hat{\Delta}(\mu+\nu) V_{-\nu}(\hat{\rho}_{\text{g}}+\mu+\nu)
=\hat{\Delta}(\mu) V_\nu(\hat{\rho}_{\text{g}}+\mu) .
\end{align}
\end{lemma}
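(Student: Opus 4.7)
The plan is to verify the identity by a direct factor-by-factor comparison, since both sides admit a natural factorization indexed by $\hat{R}^+$. Concretely, I would rearrange the asserted equality as
\begin{equation*}
\frac{\hat{\Delta}(\mu+\nu)}{\hat{\Delta}(\mu)} = \frac{V_\nu(\hat{\rho}_{\text{g}}+\mu)}{V_{-\nu}(\hat{\rho}_{\text{g}}+\mu+\nu)}
\end{equation*}
(so the non-vanishing of the sides is controlled by Lemmas \ref{regular:lem} and \ref{bc:lem}, since $\mu,\mu+\nu\in\hat{P}_c$), and then verify that the $\alpha$-factor on the left coincides with the $\alpha$-factor on the right for every $\alpha\in\hat{R}^+$.

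For the left-hand side, I would set $a=\langle\hat{\rho}_{\text{g}},\alpha^\vee\rangle$, $\ell=\langle\mu,\alpha^\vee\rangle$, $k=\langle\nu,\alpha^\vee\rangle\in\{-2,-1,0,1,2\}$ (valid because $\omega$ is small), and $u=a+\ell$. Using the telescoping identity $(b\!:\!\kappa)_{\ell+k}/(b\!:\!\kappa)_\ell=2^k\prod_{j=0}^{k-1}\sin\kappa(b+\ell+j)$ (and its $k<0$ analogue), the $\alpha$-factor of $\hat{\Delta}(\mu+\nu)/\hat{\Delta}(\mu)$ collapses to an explicit rational expression in sines with arguments among $\{u+k,\,u+\text{g}_\alpha+j,\,u+1-\text{g}_\alpha+j\}$.

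For the right-hand side, the products in \eqref{V} range over all of $\hat{R}$, so for each $\alpha\in\hat{R}^+$ I would collect the contributions of $\pm\alpha$ to $V_\nu(\hat{\rho}_{\text{g}}+\mu)$ and the contributions of $\pm\alpha$ to $V_{-\nu}(\hat{\rho}_{\text{g}}+\mu+\nu)$. Using $\kappa_{-\alpha}=\kappa_\alpha$, $\text{g}_{-\alpha}=\text{g}_\alpha$ and $\sin(-x)=-\sin x$, the ratio becomes a single $\alpha$-factor in which only roots with $\langle\nu,\alpha^\vee\rangle>0$ (contributing to $V_\nu$) and those with $\langle\nu,\alpha^\vee\rangle<0$ (contributing to $V_{-\nu}$ after the sign flip) appear.

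Finally, I would verify the coincidence of the two $\alpha$-factors by a case distinction on $k$. The case $k=0$ is trivial; for $|k|=1$ the identity reduces to
\begin{equation*}
\frac{\sin\kappa_\alpha(u+k)\sin\kappa_\alpha(u+k\text{g}_\alpha)}{\sin\kappa_\alpha u\,\sin\kappa_\alpha(u+k-k\text{g}_\alpha)},
\end{equation*}
which matches on both sides directly; for $|k|=2$ an extra pair of sines shifted by $1$ appears, accounted for exactly by the second product in \eqref{V}. The main obstacle is purely organizational: correctly aligning the contributions of $\alpha$ and $-\alpha$ across the two factorizations and tracking the $+1$ shifts that appear when $|\langle\nu,\alpha^\vee\rangle|=2$. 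Once this bookkeeping is set up, each case is a one-line cancellation.
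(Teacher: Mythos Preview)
Your approach is correct and is essentially the same as the paper's: both arguments expand the products over $\alpha\in\hat{R}^+$, split into the five cases $\langle\nu,\alpha^\vee\rangle\in\{-2,-1,0,1,2\}$, and verify the resulting sine identities by direct cancellation. The only cosmetic difference is that the paper multiplies out $\hat{\Delta}(\mu+\nu)\,V_{-\nu}(\hat{\rho}_{\text{g}}+\mu+\nu)$ and identifies the product with $\hat{\Delta}(\mu)\,V_\nu(\hat{\rho}_{\text{g}}+\mu)$, whereas you pass to the ratio first (legitimately, since Lemmas~\ref{regular:lem} and~\ref{bc:lem} ensure nonvanishing under the hypothesis $\mu,\mu+\nu\in\hat{P}_c$).
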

\begin{proof}
Multiplication of
\begin{align*}
\hat{\Delta}(\mu+\nu)
&=\prod_{\alpha\in \hat{R}^+}
 \frac{\sin\kappa_\alpha \langle \hat{\rho}_{\text{g}}+\mu+\nu,\alpha^\vee\rangle}{\sin\kappa_\alpha \langle \hat{\rho}_{\text{g}},\alpha^\vee\rangle}
 \frac{(\langle \hat{\rho}_{\text{g}},\alpha^\vee \rangle+\text{g}_\alpha\! : \!\kappa_\alpha)_{\langle\mu+\nu,\alpha^\vee\rangle}}
         {(\langle \hat{\rho}_{\text{g}},\alpha^\vee \rangle+1-\text{g}_\alpha\! : \!\kappa_\alpha)_{\langle \mu+\nu,\alpha^\vee\rangle}}\\
&=
\prod_{\alpha\in \hat{R}^+}
 \frac{\sin\kappa_\alpha \langle \hat{\rho}_{\text{g}}+\mu,\alpha^\vee\rangle}{\sin\kappa_\alpha \langle \hat{\rho}_{\text{g}},\alpha^\vee\rangle}
 \frac{(\langle \hat{\rho}_{\text{g}},\alpha^\vee \rangle+\text{g}_\alpha\! : \!\kappa_\alpha)_{\langle\mu,\alpha^\vee\rangle}}
         {(\langle \hat{\rho}_{\text{g}},\alpha^\vee \rangle+1-\text{g}_\alpha\! : \!\kappa_\alpha)_{\langle \mu,\alpha^\vee\rangle}}\\
&\quad \times  \prod_{\substack{\alpha\in \hat{R}^+\\ \langle \nu,\alpha^\vee \rangle>0  }}
\frac{\sin \kappa_\alpha (\langle \hat{\rho}_{\text{g}}+\mu,\alpha^\vee \rangle+1)}
{\sin\kappa_\alpha   \langle \hat{\rho}_{\text{g}}+\mu,\alpha^\vee \rangle      }
 \frac{\sin \kappa_\alpha (\langle \hat{\rho}_{\text{g}}+\mu,\alpha^\vee \rangle+\text{g}_\alpha)}
{\sin\kappa_\alpha    (\langle \hat{\rho}_{\text{g}}+\mu,\alpha^\vee \rangle  +1-\text{g}_\alpha)     }
 \\
&\quad \times  \prod_{\substack{\alpha\in \hat{R}^+\\ \langle \nu,\alpha^\vee \rangle=2  }}
\frac{\sin \kappa_\alpha (\langle \hat{\rho}_{\text{g}}+\mu,\alpha^\vee \rangle+2)}
{\sin\kappa_\alpha   (\langle \hat{\rho}_{\text{g}}+\mu,\alpha^\vee \rangle     +1) }
\frac{\sin \kappa_\alpha (\langle \hat{\rho}_{\text{g}}+\mu,\alpha^\vee \rangle+1+\text{g}_\alpha)}
{\sin\kappa_\alpha    (\langle \hat{\rho}_{\text{g}}+\mu,\alpha^\vee \rangle  +2-\text{g}_\alpha)     }
 \\
&\quad \times  \prod_{\substack{\alpha\in \hat{R}^+\\ \langle \nu,\alpha^\vee \rangle<0  }}
\frac{\sin \kappa_\alpha (\langle \hat{\rho}_{\text{g}}+\mu,\alpha^\vee \rangle-1)}
{\sin\kappa_\alpha   \langle \hat{\rho}_{\text{g}}+\mu,\alpha^\vee \rangle      }
 \frac{\sin \kappa_\alpha (\langle \hat{\rho}_{\text{g}}+\mu,\alpha^\vee \rangle-\text{g}_\alpha)}
{\sin\kappa_\alpha    (\langle \hat{\rho}_{\text{g}}+\mu,\alpha^\vee \rangle  -1+\text{g}_\alpha)     }
\\
&\quad \times  \prod_{\substack{\alpha\in \hat{R}^+\\ \langle \nu,\alpha^\vee \rangle=-2  }}
\frac{\sin \kappa_\alpha (\langle \hat{\rho}_{\text{g}}+\mu,\alpha^\vee \rangle-2)}
{\sin\kappa_\alpha   (\langle \hat{\rho}_{\text{g}}+\mu,\alpha^\vee \rangle     -1) }
\frac{\sin \kappa_\alpha (\langle \hat{\rho}_{\text{g}}+\mu,\alpha^\vee \rangle-1-\text{g}_\alpha)}
{\sin\kappa_\alpha    (\langle \hat{\rho}_{\text{g}}+\mu,\alpha^\vee \rangle  -2+\text{g}_\alpha)     }
\end{align*}
by
\begin{multline*}
V_{-\nu}(\hat{\rho}_{\text{g}}+\mu+\nu)=V_{\nu}(-\hat{\rho}_{\text{g}}-\mu-\nu)=\\
  \prod_{\substack{\alpha\in \hat{R}^+\\     \langle \nu,\alpha^\vee\rangle>0 }}
\frac{\sin\kappa_\alpha(\langle \hat{\rho}_{\text{g}}+\mu,\alpha^\vee \rangle+1-\text{g}_\alpha)}{\sin\kappa_\alpha(\langle \hat{\rho}_{\text{g}}+\mu,\alpha^\vee \rangle +1)}
  \prod_{\substack{\alpha\in \hat{R}^+\\     \langle \nu,\alpha^\vee\rangle=2 }}
     \frac{\sin\kappa_\alpha(\langle \hat{\rho}_{\text{g}}+\mu,\alpha^\vee \rangle+2-\text{g}_\alpha)}
             {\sin\kappa_\alpha(\langle \hat{\rho}_{\text{g}}+\mu,\alpha^\vee \rangle+2)}\\
\times \prod_{\substack{\alpha\in \hat{R}^+\\     \langle \nu,\alpha^\vee\rangle<0 }}
   \frac{\sin\kappa_\alpha(\langle \hat{\rho}_{\text{g}}+\mu,\alpha^\vee \rangle-1+\text{g}_\alpha)}
           {\sin {\kappa_\alpha}(\langle \hat{\rho}_{\text{g}}+\mu,\alpha^\vee \rangle -1)}
\prod_{\substack{\alpha\in \hat{R}^+\\     \langle \nu,\alpha^\vee\rangle=-2 }}
   \frac{\sin\kappa_\alpha(\langle \hat{\rho}_{\text{g}}+\mu,\alpha^\vee \rangle-2+\text{g}_\alpha)}
           {\sin\kappa_\alpha(\langle \hat{\rho}_{\text{g}}+\mu,\alpha^\vee \rangle -2)}
\end{multline*}
entails
\begin{align*}
&\prod_{\alpha\in \hat{R}^+}
 \frac{\sin\kappa_\alpha \langle \hat{\rho}_{\text{g}}+\mu,\alpha^\vee\rangle}{\sin\kappa_\alpha \langle \hat{\rho}_{\text{g}},\alpha^\vee\rangle}
 \frac{(\langle \hat{\rho}_{\text{g}},\alpha^\vee \rangle+\text{g}_\alpha\! : \!\kappa_\alpha)_{\langle\mu,\alpha^\vee\rangle}}
         {(\langle \hat{\rho}_{\text{g}},\alpha^\vee \rangle+1-\text{g}_\alpha\! : \!\kappa_\alpha)_{\langle \mu,\alpha^\vee\rangle}}\\
&
\times
 \prod_{\substack{\alpha\in \hat{R}^+\\ \langle \nu,\alpha^\vee\rangle>0  }}
  \frac{\sin {\kappa_\alpha} (\langle \hat{\rho}_{\text{g}}+\mu,\alpha^\vee \rangle+\text{g}_\alpha)}
          {\sin {\kappa_\alpha}    (\langle \hat{\rho}_{\text{g}}+\mu,\alpha^\vee \rangle )     }
\prod_{\substack{\alpha\in \hat{R}^+\\ \langle \nu, \alpha^\vee\rangle=2  }}
  \frac{\sin {\kappa_\alpha} (\langle \hat{\rho}_{\text{g}}+\mu,\alpha^\vee \rangle+1+\text{g}_\alpha)}
          {\sin {\kappa_\alpha}    (\langle \hat{\rho}_{\text{g}}+\mu,\alpha^\vee \rangle +1)     }\\
&
\times\prod_{\substack{\alpha\in \hat{R}^+\\ \langle \nu,\alpha^\vee \rangle<0  }}
  \frac{\sin {\kappa_\alpha} (\langle \hat{\rho}_{\text{g}}+\mu,\alpha^\vee \rangle-\text{g}_\alpha)}
          {\sin {\kappa_\alpha}    (\langle \hat{\rho}_{\text{g}}+\mu,\alpha^\vee \rangle )     }
\prod_{\substack{\alpha\in \hat{R}^+\\ \langle \nu, \alpha^\vee \rangle=-2  }}
  \frac{\sin {\kappa_\alpha} (\langle \hat{\rho}_{\text{g}}+\mu,\alpha^\vee \rangle-1-\text{g}_\alpha)}
          {\sin {\kappa_\alpha}    (\langle \hat{\rho}_{\text{g}}+\mu,\alpha^\vee \rangle-1)     }\\
&=\hat{\Delta}(\mu)V_\nu(\hat{\rho}_{\text{g}}+\mu) .
\end{align*}
\end{proof}

\begin{proposition}[Adjoint]\label{adjoint:prp}
For any $\omega\in\hat{P}^+$ small, the finite Macdonald difference operator $D_\omega$ \eqref{D-finite} satisfies the
adjointness relation
\begin{equation}\label{adjoint:rel}
\langle D_\omega f,g\rangle_{\hat{\Delta}}=\langle  f, D_{\omega^*}g\rangle_{\hat{\Delta}}\qquad (f,g\in \ell^2(\hat{\rho}_{\text{g}}+\hat{P}_c,\hat{\Delta})).
\end{equation}
Here $\omega^*:=-w_0\omega$ ($\in\hat{P}^+$), where $w_0$ refers to the {\em longest element} of $W$.
\end{proposition}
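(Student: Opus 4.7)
My plan is to establish \eqref{adjoint:rel} by a sum-by-parts computation in which the shift operator $T_\nu$ is moved from $f$ to $g$ via a change of summation variable, supplemented by the recurrence of Lemma~\ref{recurrence:lem} for the discrete weight $\hat{\Delta}$. Concretely, expanding
\begin{equation*}
\langle D_\omega f,g\rangle_{\hat{\Delta}}=\sum_{\mu\in\hat{P}_c}\sum_{\substack{\nu\in\hat{P}(\omega)\\ \mu+\nu\in\hat{P}_c}}\sideset{}{'}\sum_{\eta\in W_\nu(w_\nu^{-1}\omega)}V_\nu(\hat{\rho}_{\text{g}}+\mu)U_{\nu,\eta}(\hat{\rho}_{\text{g}}+\mu)f(\hat{\rho}_{\text{g}}+\mu+\nu)\overline{g(\hat{\rho}_{\text{g}}+\mu)}\hat{\Delta}(\mu),
\end{equation*}
I would first substitute $\mu\mapsto\mu-\nu$, so that the old constraint $\mu+\nu\in\hat{P}_c$ becomes $\mu\in\hat{P}_c$ and the old constraint $\mu\in\hat{P}_c$ becomes $\mu-\nu\in\hat{P}_c$. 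Two simplifications then materialize: Lemma~\ref{recurrence:lem} applied to the pair $(\mu-\nu,\mu)$ yields $\hat{\Delta}(\mu-\nu)V_\nu(\hat{\rho}_{\text{g}}+\mu-\nu)=\hat{\Delta}(\mu)V_{-\nu}(\hat{\rho}_{\text{g}}+\mu)$, reinstating the weight function at the new summation index; and since $\langle\nu,\alpha^\vee\rangle=0$ for every $\alpha\in\hat{R}_\nu$, the formula \eqref{U} immediately gives $U_{\nu,\eta}(x-\nu)=U_{\nu,\eta}(x)$, so the $U$-factor transports trivially under the shift.

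Next I would relabel $\nu\mapsto\nu':=-\nu$ and identify the result with $\langle f,D_{\omega^*}g\rangle_{\hat{\Delta}}$. The needed ingredients are: that $-\mathrm{id}$ restricts to a bijection $\hat{P}(\omega)\to\hat{P}(\omega^*)$, which holds because the opposition involution $-w_0$ preserves $\hat{P}^+$ and the dominance order (so $\mu\leq\omega$ iff $-w_0\mu\leq\omega^*$); that $\hat{R}_{-\nu}=\hat{R}_\nu$, whence $U_{-\nu,\eta}=U_{\nu,\eta}$ and $W_{-\nu}=W_\nu$; and a combinatorial matching of the inner $\eta$-orbit sums showing that $\sum_{\eta\in W_\nu(w_\nu^{-1}\omega)}U_{\nu,\eta}$ coincides with the corresponding orbit sum over $W_{\nu'}(w_{\nu'}^{-1}\omega^*)$ via a natural bijection between $W_\nu$-orbits in $W\omega$ and in $W\omega^*$. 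Since the coefficients $V_\nu$ and $U_{\nu,\eta}$ are real-valued on $\hat{\rho}_{\text{g}}+\hat{P}_c$ at the parameter specialization of Theorem~\ref{main:thm}, the complex conjugation of $g$ causes no further obstruction.

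The principal technical obstacle will be the reconciliation of the truncated summation ranges and of the primed restrictions on the $\eta$-summations. Depending on whether $\text{g}$ is regular in the sense of \eqref{regular-g}, the primed sums on the two sides of \eqref{adjoint:rel} may omit different subsets of $\eta$, and the constraint sets $\{\mu+\nu\in\hat{P}_c\}$ and $\{\mu-\nu\in\hat{P}_c\}$ that bound the $\nu$-sums do not match up term-by-term. The vanishing statements of Lemmas~\ref{bc:lem} and~\ref{num-U:lem} are tailored for exactly this situation: wherever the denominator of $V_\nu$ or of $U_{\nu,\eta}$ might produce a singularity on the boundary, the corresponding numerator carries a compensating zero, so one may freely extend or restrict the summation ranges without altering the value of the sum. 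Once these boundary identifications are carried out the triple sum rearranges into $\langle f,D_{\omega^*}g\rangle_{\hat{\Delta}}$, completing the argument.
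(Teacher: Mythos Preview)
Your proposal is correct and follows essentially the same route as the paper's proof: a sum-by-parts in which one performs the translation $\mu\mapsto\mu\pm\nu$, invokes Lemma~\ref{recurrence:lem} for $\hat{\Delta}$, uses the $T_\nu$-invariance of $U_{\nu,\eta}$ (since only roots in $\hat{R}_\nu$ enter), and then negates $\nu$ using $\hat{P}(\omega^*)=-\hat{P}(\omega)$ together with $W_{-\nu}=W_\nu$ and $U_{-\nu,\eta}=U_{\nu,\eta}$. The only place where your outline is slightly less explicit than the paper is the ``combinatorial matching of the inner $\eta$-orbit sums'': the paper realizes this via $W_{-\nu}(w_{-\nu}^{-1}\omega^*)=-W_\nu(w_\nu^{-1}\omega)$ (from $w_{-\nu}=w_0w_\nu$) and then the identity $\sum_{\eta\in W_\nu(w_\nu^{-1}\omega)}U_{\nu,-\eta}=\sum_{\eta}U_{\nu,w_0\eta}=\sum_{\eta}U_{\nu,\eta}$, which is exactly the bijection you allude to.
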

\begin{proof}
The stated equality is readily inferred via the following sequence
of elementary manipulations:
\begin{align*}
&\langle f,  D_{\omega^*}g\rangle_{\hat{\Delta}} =\sum_{\substack{\mu\in \hat{P}_{ c}}}
f(\hat{\rho}_{\text{g}}+\mu) \overline{(D_{\omega^*} g) (\hat{\rho}_{\text{g}}+\mu)}\hat{\Delta}(\mu)\\
&=\sideset{}{'}\sum_{\substack{\nu\in \hat{P}(\omega^*)\\ \eta\in W_\nu(w_\nu^{-1}\omega^*)   }}
\sum_{\substack{\mu\in \hat{P}_{ c}\\ \mu+\nu\in \hat{P}_{ c}}}
f(\hat{\rho}_{\text{g}}+\mu)  V_{\nu}(\hat{\rho}_{\text{g}}+\mu) U_{\nu,\eta}(\hat{\rho}_{\text{g}}+\mu) \overline{g(\hat{\rho}_{\text{g}}+\mu+\nu)}\hat{\Delta}(\mu)\\
&\stackrel{(i)}{=}\!\!\!\sideset{}{'}\sum_{\substack{\nu\in \hat{P}(\omega)\\ \eta\in W_\nu(w_\nu^{-1}\omega)   }}
\sum_{\substack{\mu\in \hat{P}_{ c}\\ \mu-\nu\in \hat{P}_{ c}}}
f(\hat{\rho}_{\text{g}}+\mu)  V_{-\nu}(\hat{\rho}_{\text{g}}+\mu) U_{\nu,-\eta}(\hat{\rho}_{\text{g}}+\mu) \overline{g(\hat{\rho}_{\text{g}}+\mu-\nu)}\hat{\Delta}(\mu)\\
&\stackrel{(ii)}{=}\!\!\!\!\!\!\sideset{}{'}\sum_{\substack{\nu\in \hat{P}(\omega)\\ \eta\in W_\nu(w_\nu^{-1}\omega)   }}
\sum_{\substack{\mu\in \hat{P}_{ c}\\ \mu+\nu\in \hat{P}_{ c}}}
f(\hat{\rho}_{\text{g}}+\mu+\nu)  V_{-\nu}(\hat{\rho}_{\text{g}}+\mu+\nu) U_{\nu,-\eta}(\hat{\rho}_{\text{g}}+\mu) \overline{g(\hat{\rho}_{\text{g}}+\mu)}\hat{\Delta}(\mu+\nu)\\
&\stackrel{(iii)}{=}\!\!\!\!\!\!\sideset{}{'}\sum_{\substack{\nu\in \hat{P}(\omega)\\ \eta\in W_\nu(w_\nu^{-1}\omega)   }}
\sum_{\substack{\mu\in \hat{P}_{ c}\\ \mu+\nu\in \hat{P}_{ c}}}
f(\hat{\rho}_{\text{g}}+\mu+\nu)  V_{\nu}(\hat{\rho}_{\text{g}}+\mu) U_{\nu,\eta}(\hat{\rho}_{\text{g}}+\mu) \overline{g(\hat{\rho}_{\text{g}}+\mu)}\hat{\Delta}(\mu)\\
&=
\sum_{\substack{\mu\in \hat{P}_{ c}}}
(D_\omega f) (\hat{\rho}_{\text{g}}+\mu) \overline{g(\hat{\rho}_{\text{g}}+\mu)}\hat{\Delta}(\mu)=\langle  D_\omega f,g\rangle_{\hat{\Delta}} .
\end{align*}
Here we have used:
$(i)$ that $\hat{P}(\omega^*)=-\hat{P}(\omega)$, $W_{-\nu}(w_{-\nu}^{-1}\omega^*)=-W_\nu(w_\nu^{-1}\omega)$ (as
$w_{-\nu}=w_0 w_\nu$ and $W_{-\nu}=W_\nu$) and  $U_{-\nu,-\eta}=U_{\nu,-\eta}$,
 $(ii)$ a translation of $\mu$ by $\nu$ and the equality
 $U_{\nu,-\eta}(\hat{\rho}_{\text{g}}+\mu+\nu) =U_{\nu,-\eta}(\hat{\rho}_{\text{g}}+\mu)$,
 $(iii)$ Lemma \ref{recurrence:lem} and the identity
 \begin{equation*}
\sum_{\eta\in W_\nu(w_\nu^{-1}\omega)  }   U_{\nu,-\eta}=\sum_{\eta\in W_\nu(w_\nu^{-1}\omega)  }   U_{\nu,w_0\eta}=
\sum_{\eta\in W_\nu(w_\nu^{-1}\omega)  }   U_{\nu,\eta} .
\end{equation*}
\end{proof}

\begin{remark}\label{positivity:rem}
The recurrence in Lemma \ref{recurrence:lem} determines the value of $\hat{\Delta}(\mu+\nu)$ in terms of $\hat{\Delta}(\mu)$ (and vice versa), as the coefficients
$V_{-\nu}(\hat{\rho}_{\text{g}}+\mu+\nu)$ and $V_\nu(\hat{\rho}_{\text{g}}+\mu)$ on both sides do not vanish (cf. Lemma \ref{bc:lem}).
A careful examination
of the proofs of Lemmas \ref{regular:lem} and \ref{bc:lem} confirms that these coefficients are in fact always positive.
\end{remark}

\subsection{Orthogonality}
We now interpret the polynomials in the finite-dimensional subspace of $\mathbb{C}[P]^W$ spanned by $m_\lambda$, $\lambda\in P_c$ as elements of
$\ell^2(\hat{\rho}_{\text{g}}+\hat{P}_c,\hat{\Delta})$ by restricting the polynomial variable $x$ to the finite lattice $\hat{\rho}_{\text{g}}+\hat{P}_c\subset E$.
Our main concern is to show that the Macdonald polynomials $p_\lambda$, $\lambda\in P_c$ then constitute an orthogonal
eigenbasis of $D_\omega$ \eqref{D-finite} in this Hilbert space. Recall in this connection that for $\lambda\in P_c$ the inequality  $\mu<\lambda$ implies that $\mu\in P_c$ (cf. the proof of Proposition \ref{meromorphy:prp}), i.e. the polynomials $m_\lambda$, $\lambda\in P_c$  and
$p_\lambda$, $\lambda\in P_c$ span the same subspace of $\mathbb{C}[P]^W$.
At this point it has only been demonstrated that the Macdonald polynomials are meromorphic in $\text{g}>0$ (cf. Proposition \ref{meromorphy:prp}). For the moment all proofs in this subsection therefore assume that the positive root multiplicity parameters are {\em generic}  (and thus in particular regular) avoiding possible poles of the expansion coefficients and degeneracies of the eigenvalues (cf. Lemma \ref{nondegeneracy:lem}), while the statements of the propositions are formulated more generally for all $\text{g}>0$.
A simple continuity argument in the next subsection will remove this discrepancy.

\begin{proposition}[Diagonalization]\label{diagonal:prp}
Let $\omega\in\hat{P}^+$ be small. For any $\text{g}>0$, the Macdonald polynomials $p_\lambda$, $\lambda\in P_c$ form a basis of eigenfunctions
for $D_\omega$ \eqref{D-finite} in $\ell^2(\hat{\rho}_{\text{g}}+\hat{P}_c,\hat{\Delta})$:
\begin{equation}\label{ev-discrete}
D_\omega p_\lambda=E_\omega (\rho_{\text{g}}+\lambda)p_\lambda,\qquad \lambda\in P_c,
\end{equation}
where the eigenvalues are given by $E_\omega$ \eqref{spectrum}.
\end{proposition}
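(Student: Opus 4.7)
The strategy is to start from the functional identity $\mathcal{D}_\omega p_\lambda = E_\omega(\rho_{\text{g}}+\lambda)\,p_\lambda$ on all of $E$ (which extends \eqref{EV:omega} via Proposition \ref{meromorphy:prp}), evaluate at $x=\hat{\rho}_{\text{g}}+\mu$ for $\mu\in\hat{P}_c$, and argue that only those summands constituting $D_\omega$ survive the evaluation. I work first at \emph{regular} $\text{g}>0$ in the sense of \eqref{regular-g} and extend to arbitrary $\text{g}>0$ by continuity.

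For the eigenvalue identity \eqref{ev-discrete}, the summands in $\mathcal{D}_\omega$ absent from $D_\omega$ are of two kinds: \emph{(a)} those indexed by $\nu\in\hat{P}(\omega)$ with $\mu+\nu\notin\hat{P}_c$, and \emph{(b)} those indexed by $\eta\in W_\nu(w_\nu^{-1}\omega)$ for which $U_{\nu,\eta}(\hat{\rho}_{\text{g}}+\mu)$ has a potentially vanishing denominator. For \emph{(a)}, Lemma \ref{bc:lem} guarantees that the numerator of $V_\nu(\hat{\rho}_{\text{g}}+\mu)$ vanishes, while Lemma \ref{regular:lem} keeps the denominator finite. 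For \emph{(b)}, Lemma \ref{num-U:lem} provides the analogous vanishing of the numerator of $U_{\nu,\eta}(\hat{\rho}_{\text{g}}+\mu)$ precisely at the points flagged by Lemma \ref{denom-U:lem} as potential singularities. Under the regularity hypothesis on $\text{g}$ denominators are finite, so each such coefficient vanishes outright and $(D_\omega p_\lambda)(\hat{\rho}_{\text{g}}+\mu) = (\mathcal{D}_\omega p_\lambda)(\hat{\rho}_{\text{g}}+\mu) = E_\omega(\rho_{\text{g}}+\lambda)\,p_\lambda(\hat{\rho}_{\text{g}}+\mu)$.

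For the basis property I pass through orthogonality. Since $q=e^{2i\kappa}$ is unitary, complex conjugation of $\hat{m}_\omega(\rho_{\text{g}}+\mu)=\sum_{\nu\in W\omega}q^{\langle\nu,\rho_{\text{g}}+\mu\rangle}$ sends $\nu\mapsto-\nu$, yielding $\overline{E_\omega(\rho_{\text{g}}+\mu)}=E_{\omega^*}(\rho_{\text{g}}+\mu)$. Invoking Proposition \ref{adjoint:prp} together with the eigenvalue identity,
\begin{equation*}
E_\omega(\rho_{\text{g}}+\lambda)\,\langle p_\lambda, p_{\tilde\lambda}\rangle_{\hat{\Delta}} = \langle D_\omega p_\lambda, p_{\tilde\lambda}\rangle_{\hat{\Delta}} = \langle p_\lambda, D_{\omega^*} p_{\tilde\lambda}\rangle_{\hat{\Delta}} = E_\omega(\rho_{\text{g}}+\tilde\lambda)\,\langle p_\lambda, p_{\tilde\lambda}\rangle_{\hat{\Delta}}.
\end{equation*}
For $\lambda\neq\tilde\lambda$ in $P_c$, Lemma \ref{nondegeneracy:lem} supplies a small $\omega\in\hat{P}^+$ with distinct eigenvalues, forcing $\langle p_\lambda, p_{\tilde\lambda}\rangle_{\hat{\Delta}}=0$. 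The resulting orthogonality (against the positive weight $\hat{\Delta}$, cf.\ Remark \ref{positivity:rem}) upgrades to linear independence, and matching $|P_c|=|\hat{P}_c|$ (inherent to the admissible-pair duality, consistent with the duality-symmetric matrix following Theorem \ref{main:thm}) produces a basis of $\ell^2(\hat{\rho}_{\text{g}}+\hat{P}_c,\hat{\Delta})$.

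The main obstacle is the first step: ensuring that each dropped summand genuinely vanishes rather than producing a $0/0$ indeterminacy. This is precisely what Lemmas \ref{regular:lem}--\ref{num-U:lem} are engineered to handle under the regularity hypothesis on $\text{g}$, with the non-regular case recovered from the meromorphy assertion of Proposition \ref{meromorphy:prp} by a routine continuity argument in the multiplicity parameter.
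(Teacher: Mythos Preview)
Your argument for the eigenvalue equation \eqref{ev-discrete} mirrors the paper's: restrict the functional identity $\mathcal{D}_\omega p_\lambda = E_\omega(\rho_{\text{g}}+\lambda)\,p_\lambda$ to the discrete grid and invoke Lemmas \ref{regular:lem}--\ref{num-U:lem} at regular $\text{g}$ to identify the action of $\mathcal{D}_\omega$ with that of $D_\omega$.

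For the basis property you take a detour through orthogonality (essentially absorbing the argument of Proposition \ref{orthogonal:prp}), whereas the paper argues more directly that eigenvectors with distinct eigenvalues are linearly independent. Either route is fine, but both require knowing that each $p_\lambda$ is \emph{nonzero} as an element of $\ell^2(\hat{\rho}_{\text{g}}+\hat{P}_c,\hat{\Delta})$: mutually orthogonal vectors are not linearly independent if one of them vanishes, and an eigenvector is by definition nonzero. You have not supplied this step, and the positivity of $\hat{\Delta}$ alone does not give it. The paper fills the gap via the principal specialization formula \eqref{specialization:trigonometric}: the moment bounds of Lemma \ref{mb:lem} force every sine argument into $(0,\pi)$, so $p_\lambda(\hat{\rho}_{\text{g}})>0$ for all $\lambda\in P_c$, and in particular $p_\lambda$ does not vanish identically on $\hat{\rho}_{\text{g}}+\hat{P}_c$. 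With this one line added, your proof is complete and matches the paper's in substance; the cardinality equality $|P_c|=|\hat{P}_c|$ that you cite from the duality discussion is recorded explicitly in Remark \ref{dimension:rem}.
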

\begin{proof}
By virtue of Proposition \ref{meromorphy:prp}---the Macdonald polynomial $p_\lambda$ satisfies
the eigenvalue equation in Eqs. \eqref{EV:omega}, \eqref{spectrum} as a meromorphic identity in the positive root multiplicity parameter(s). Let us pick $\text{g}>0$ generic (see above). The eigenvalue equation in question then reduces to Eq. \eqref{ev-discrete} upon restriction of
the polynomial variable $x$ to $\hat{\rho}_{\text{g}}+\hat{P}_c$ (by the argument following Eq. \eqref{regular-g}).
From the principal specialization formula in Eq. \eqref{specialization:trigonometric} and the estimates in Lemma \ref{mb:lem} it is moreover seen that $p_\lambda (\hat{\rho}_{\text{g}})>0$ for $\lambda\in P_c$ (as the arguments of the sine functions in the product formula again take values between $0$ and $\pi$), so  $p_\lambda$ constitutes for such $\lambda$ a true (i.e. nonvanishing) eigenfunction of $D_\omega$ in $\ell^2(\hat{\rho}_{\text{g}}+\hat{P}_c,\hat{\Delta})$. The nondegeneracy of the eigenvalues in Lemma \ref{nondegeneracy:lem} furthermore implies that the eigenfunctions $p_\lambda$, $\lambda\in P_c$ are linearly independent in $\ell^2(\hat{\rho}_{\text{g}}+\hat{P}_c,\hat{\Delta})$. Hence, they form a basis of this Hilbert space as
$\dim \ell^2(\hat{\rho}_{\text{g}}+\hat{P}_c,\hat{\Delta})=|\hat{P}_c|=|P_c|$ (cf. Remark \ref{dimension:rem} below).
\end{proof}

\begin{proposition}[Orthogonality]\label{orthogonal:prp}
For any $\text{g}>0$, the Macdonald polynomials $p_\lambda$, $\lambda\in P_c$ form an orthogonal
basis of $\ell^2(\hat{\rho}_{\text{g}}+\hat{P}_c,\hat{\Delta})$:
\begin{equation}
\langle p_\lambda, p_{\tilde{\lambda}}\rangle_{\hat{\Delta}}=
0\qquad \text{iff}\ \lambda\neq \tilde{\lambda}
\end{equation}
($\lambda,\tilde{\lambda}\in P_c$).
\end{proposition}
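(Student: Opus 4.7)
The plan is to apply the standard spectral orthogonality argument, adapted to the non-self-adjointness of $D_\omega$ in Proposition~\ref{adjoint:prp}. For $\lambda,\tilde\lambda\in P_c$ and any small $\omega\in\hat P^+$, combining the eigenvalue equation of Proposition~\ref{diagonal:prp} with the adjointness relation \eqref{adjoint:rel} gives
\begin{equation*}
E_\omega(\rho_{\text{g}}+\lambda)\,\langle p_\lambda,p_{\tilde\lambda}\rangle_{\hat\Delta}
=\langle D_\omega p_\lambda,p_{\tilde\lambda}\rangle_{\hat\Delta}
=\langle p_\lambda,D_{\omega^*}p_{\tilde\lambda}\rangle_{\hat\Delta}
=\overline{E_{\omega^*}(\rho_{\text{g}}+\tilde\lambda)}\,\langle p_\lambda,p_{\tilde\lambda}\rangle_{\hat\Delta}.
\end{equation*}

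The crux is to verify the complex conjugation symmetry $\overline{E_{\omega^*}(y)}=E_\omega(y)$ at every real $y\in E$. Via the explicit expression in \eqref{spectrum}, this unfolds into three sub-identities. First, $\overline{\hat m_\mu(y)}=\hat m_{\mu^*}(y)$, since for real $\kappa$ and $y$ one has $W\mu^*=-W\mu$. Second, the substitutions $\eta\mapsto -w_0\eta$ and $\alpha\mapsto -w_0\alpha$ applied to the defining sum for $\epsilon_{\omega^*,\mu^*}$ yield $\epsilon_{\omega^*,\mu^*}=\epsilon_{\omega,\mu}$; here one uses that $-w_0$ maps $\hat R_\mu^+$ bijectively onto $\hat R_{\mu^*}^+$ and that $t_\alpha$ is invariant under $W$ and under $\alpha\mapsto -\alpha$. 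Third, $\epsilon_{\omega,\mu}$ is real: the substitution $\eta\mapsto w'\eta$, where $w'$ is the longest element of the parabolic Weyl group $W(\hat R_\mu)\subseteq W_\mu$, sends $\hat R_\mu^+$ to $-\hat R_\mu^+$ and hence inverts every exponent $\langle\eta,\alpha^\vee\rangle$, conjugating each summand. Reindexing $\mu\mapsto\mu^*$ in the expansion of $E_{\omega^*}$ and assembling these then delivers the desired symmetry.

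With the symmetry in hand, the adjointness display collapses to $(E_\omega(\rho_{\text{g}}+\lambda)-E_\omega(\rho_{\text{g}}+\tilde\lambda))\langle p_\lambda,p_{\tilde\lambda}\rangle_{\hat\Delta}=0$. For $\lambda\neq\tilde\lambda$, Lemma~\ref{nondegeneracy:lem} produces a small $\omega$ with the prefactor nonzero as an analytic function of $\text{g}$, so for generic $\text{g}>0$ the orthogonality $\langle p_\lambda,p_{\tilde\lambda}\rangle_{\hat\Delta}=0$ holds; it is promoted to all $\text{g}>0$ by the continuity argument deferred to the next subsection. The converse direction, $\langle p_\lambda,p_\lambda\rangle_{\hat\Delta}>0$, is immediate: $\hat\Delta>0$ on $\hat P_c$ (Lemma~\ref{mb:lem}) and $p_\lambda(\hat\rho_{\text{g}})>0$ by the principal specialization formula \eqref{specialization:trigonometric}, so the diagonal norm is bounded below by $\hat\Delta(0)\,|p_\lambda(\hat\rho_{\text{g}})|^2>0$.

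I expect the principal obstacle to be the verification of $\overline{E_{\omega^*}(y)}=E_\omega(y)$, and in particular the reality of the coefficients $\epsilon_{\omega,\mu}$, which relies on the sign-reversing action of the longest element of the parabolic subgroup $W(\hat R_\mu)$. The remaining steps---the manipulation via adjointness, the appeal to Lemma~\ref{nondegeneracy:lem}, and the continuity extension to all $\text{g}>0$---are routine once this identity is secured.
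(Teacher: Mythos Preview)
Your proof is correct and follows essentially the same route as the paper: adjointness of $D_\omega$ combined with the eigenvalue equation, the identity $\overline{E_{\omega^*}}=E_\omega$, and the spectral nondegeneracy of Lemma~\ref{nondegeneracy:lem}. The paper simply asserts ``$E_{\omega^*}=\overline{E}_\omega$'' without justification; your three sub-identities (the orbit symmetry $\overline{\hat m_\mu}=\hat m_{\mu^*}$, the equality $\epsilon_{\omega^*,\mu^*}=\epsilon_{\omega,\mu}$ via $-w_0$, and the reality of $\epsilon_{\omega,\mu}$ via the longest element of $W_\mu$) supply exactly this missing verification, so your argument is in fact more complete on this point.
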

\begin{proof}
Let us assume that $\text{g}>0$ is generic (see above). The adjointness relation in Proposition \ref{adjoint:prp} and
the eigenvalue equation in Proposition \ref{diagonal:prp} then lead to the stated orthogonality via a standard argument involving the nondegeneracy of the eigenvalues in Lemma \ref{nondegeneracy:lem}:
$$
0=\langle D_\omega p_\lambda,p_{\tilde{\lambda}}\rangle_{\hat{\Delta}}-\langle p_\lambda, D_{\omega^*} p_{\tilde{\lambda}}\rangle_{\hat{\Delta}}=
(E_\omega (\rho_{\text{g}}+\lambda)-E_\omega (\rho_{\text{g}}+\tilde{\lambda}))\langle p_\lambda,p_{\tilde{\lambda}}\rangle_{\hat{\Delta}}
$$
(using that $E_{\omega^*}=\overline{E}_{\omega}$), i.e. $\langle p_\lambda,p_{\tilde{\lambda}}\rangle_{\hat{\Delta}}=0$ if $\lambda\neq \tilde{\lambda}$ because in this situation $E_\omega (\rho_{\text{g}}+\lambda)\neq E_\omega (\rho_{\text{g}}+\tilde{\lambda})$ for some $\omega\in\hat{P}^+$ small. (Notice also that $\langle p_\lambda, p_\lambda\rangle_{\hat{\Delta}}\geq |p_\lambda (\hat{\rho}_{\text{g}})|^2>0$ by the principal specialization formula \eqref{specialization:trigonometric}.)
\end{proof}

\begin{remark}\label{dimension:rem}
Let
$\varphi^\vee= k_1\alpha_1^\vee+\cdots +k_n\alpha_n^\vee$ and
$\vartheta^\vee=m_1\alpha_1^\vee+\cdots +m_n\alpha_n^\vee$ be the decompositions of $\varphi^\vee$ and $\vartheta^\vee$ with respect
to the simple coroots of $R$. Then the generating function for the cardinalities of $P_c$, $c=0,1,2,\ldots$ reads
\begin{equation*}
\sum_{c=0}^\infty |P_c|\, z^c =(1-z)^{-1}\times
\begin{cases}\prod_{j=1}^n (1-z^{k_j})^{-1}&\text{if}\ \hat{R}=R\\
\prod_{j=1}^n (1-z^{m_j})^{-1}&\text{if}\ \hat{R}=R^\vee
\end{cases}
\end{equation*}
($|z|<1$). In particular, one always has that
$|P_c|=|\hat{P}_c|$.
\end{remark}

\subsection{Analyticity}
The triangularity of the monomial expansion of $p_\lambda$ in Eq. \eqref{mp-d1} and the orthogonality
in Proposition \ref{orthogonal:prp} implies that (for generic $\text{g}>0$):
\begin{equation}
 p_\lambda = m_\lambda  -
 \sum_{\substack{\mu\in P_c\\ \mu< \lambda}}
 \frac{\langle m_\lambda, p_\mu\rangle_{\hat{\Delta}} }{\langle p_\mu, p_\mu\rangle_{\hat{\Delta}}} p_\mu
 \qquad (\lambda\in P_c).
\end{equation}
From this Gram-Schmidt type formula it is manifest---by induction on the dominant weight $\lambda$ with respect to the dominance ordering---that $p_\lambda$ is in fact {\em analytic} in $\text{g}>0$ (since the positive weight function $\hat{\Delta}$ is analytic in $\text{g}>0$ and the denominators $\langle p_\mu, p_\mu\rangle_{\hat{\Delta}}$ remain positively bounded from below). As a  consequence, the statements in
Propositions \ref{diagonal:prp} and \ref{orthogonal:prp} extend from generic $\text{g}$ to the full parameter domain $\text{g}>0$ by continuity.

\begin{remark}
It is an immediate consequence of Proposition \ref{orthogonal:prp} that the matrix
$[m_\lambda (\hat{\rho}_{\text{g}}+\mu)]_{\lambda\in P_c,\mu\in\hat{P}_c}$ is invertible, i.e. the evaluation homomorphism mapping
the subspace of $\mathbb{C}[P]^W$ spanned by $m_\lambda$, $\lambda\in P_c$ into $\ell^2(\hat{\rho}_{\text{g}}+\hat{P}_c,\hat{\Delta})$ is a linear isomorphism.
\end{remark}

\section{Normalization}\label{sec5}

\subsection{Finite Pieri identity}
By combining Proposition \ref{diagonal:prp} with the duality symmetry in Eq. \eqref{duality-symmetry}, one arrives at the
following finite Pieri identity in $\ell (\hat{\rho}_g+\hat{P}_c,\hat{\Delta})$
associated with $\omega\in P^+$ small:
 \begin{align} \label{pieri}
 \hat E_\omega P_\lambda
=\sum_{\substack{\nu\in P(\omega)\\  \lambda+\nu\in P_c}} \sideset{}{'}\sum_{\eta\in W_\nu(w_\nu^{-1}\omega)}
 \hat V_\nu(\rho_{\text{g}}+\lambda )\hat U_{\nu,\eta}(\rho_{\text{g}}+\lambda )P_{\lambda+\nu} .
\end{align}
Indeed---upon evaluating the difference equation $\hat{D}_\omega \hat{P}_\mu = \hat{E}_\omega (\hat{\rho}_g+\mu)  \hat{P}_\mu $ for $\mu\in\hat{P}_c$ at  $\rho_{\text{g}}+\lambda$, $\lambda\in P_c$ and invoking of the duality
symmetry---Eq. \eqref{pieri} follows immediately.
With the aid of the above Pieri identity and the recurrence in Lemma \ref{recurrence:lem}, it is not difficult to express the quadratic norms $\langle P_\lambda , P_\lambda\rangle_{\hat{\Delta}} $ in terms of the norms of the unit polynomial
$\langle1 , 1\rangle_{\hat{\Delta}} $.
For this purposes it suffices to restrict attention to the Pieri identities associated with the (quasi-)minuscule weights only.

\begin{lemma}[(Quasi-)Minuscule Path Connectedness]\label{path:lem}
For any $\lambda\in P_c$, there exists a path $0=\lambda^{(0)}\to\lambda^{(1)}\to\cdots \to\lambda^{(\ell )}=\lambda$ of weights in $P_c$
such that the increments $\lambda^{(k)}-
\lambda^{(k-1)}$, $k=1,\ldots ,\ell$ are given either by positive roots in the orbit $W\vartheta$ or by
minuscule weights.
\end{lemma}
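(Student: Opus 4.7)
The plan is to argue by induction on the height $\mathrm{ht}(\lambda):=\langle\lambda,2\rho^\vee\rangle$ of $\lambda$, equivalently by downward induction along the dominance order restricted to $P_c$. The base case $\lambda=0$ is the trivial (empty) path. For the inductive step, given a nonzero $\lambda\in P_c$, it suffices to exhibit a single predecessor $\mu\in P_c$ with $\mu<\lambda$ such that $\nu:=\lambda-\mu$ is either a minuscule weight or a positive root in $W\vartheta$; the induction hypothesis then supplies a path from $0$ to $\mu$ that we extend by the single step $\nu$ to reach $\lambda$.

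To produce such a predecessor, I would expand $\lambda=\sum_{i=1}^n c_i\omega_i$ in the basis of fundamental weights of $R$ and split into two cases. In the \emph{minuscule case}, where $c_k>0$ for some $k$ with $\omega_k$ minuscule, take $\mu:=\lambda-\omega_k$: its expansion coefficients remain nonnegative, so $\mu\in P^+$, and $\mu\in P_c$ is automatic because $\langle\omega_k,\hat\psi^\vee\rangle\geq 0$ (the dominant weight $\omega_k$ pairs nonnegatively with the dominant coweight $\hat\psi^\vee$, which is a nonnegative combination of simple coroots). The increment $\nu=\omega_k$ is a minuscule weight, as required.

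In the \emph{quasi-minuscule case}, where no minuscule fundamental weight appears in the expansion of $\lambda$ (automatic for $R$ of type $E_8$, $F_4$, or $G_2$, which admit no minuscule weights at all), the aim is to produce a positive short root $\alpha\in W\vartheta\cap R^+$ with $\lambda-\alpha\in P^+$, and set $\mu:=\lambda-\alpha$. Again $\mu\in P_c$ is automatic, since $\langle\alpha,\hat\psi^\vee\rangle\geq 0$ for any $\alpha\in R^+$ by the same reasoning as above. The increment $\nu=\alpha$ then qualifies.

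The main obstacle is the existence of the short positive root $\alpha$ in the quasi-minuscule case. I would handle this by combining a structural reduction with a case-by-case check across the Dynkin types: if some simple root $\alpha_k$ supporting $\lambda$ is short and satisfies $\langle\lambda,\alpha_k^\vee\rangle\geq 2$, then $\alpha:=\alpha_k$ works immediately (dominance of $\lambda-\alpha_k$ reduces to that single inequality). Otherwise $\lambda=\sum_{i\in J}c_i\omega_i$ with all $c_i\in\{0,1\}$ and $J$ indexing only non-minuscule nodes; in this residual situation one selects $\alpha$ as the maximal short root of the parabolic subsystem of $R$ generated by $\{\alpha_j:j\in J\}$ and verifies the dominance inequality $\langle\lambda,\alpha_i^\vee\rangle\geq\langle\alpha,\alpha_i^\vee\rangle$ for every simple $\alpha_i$. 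The exceptional types $E_8$, $F_4$, and $G_2$, having no minuscule weights whatsoever, are the most delicate, and there I expect to rely on the explicit fundamental-weight data (Bourbaki) to carry out the verification directly.
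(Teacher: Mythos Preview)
Your inductive framework is sound, and the minuscule branch works (though note that subtracting a minuscule weight does \emph{not} decrease $\lambda$ in the dominance order, since minuscule weights do not lie in $Q$; the height induction is fine, but calling it ``equivalently'' downward dominance induction is not accurate). The real gap is in the quasi-minuscule branch. First, the dichotomy is incomplete for multiply-laced $R$: the negation of ``some short simple root $\alpha_k$ has $c_k\geq2$'' does not force all $c_i\leq1$, since a \emph{long} simple root may carry coefficient $\geq2$ (e.g.\ $\lambda=2\omega_1$ in $F_4$). Second, and more fundamentally, your recipe of taking the maximal short root of the parabolic on the support $J$ fails already when $\lambda$ is a single non-(quasi-)minuscule fundamental weight. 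In $G_2$ with $\lambda=\omega_2$ the parabolic generated by $\alpha_2$ contains no short root at all ($\alpha_2$ is long). In $E_8$ with $\lambda=\omega_1$ the parabolic on $\{\alpha_1\}$ has maximal root $\alpha_1=2\omega_1-\omega_3$, so $\omega_1-\alpha_1=-\omega_1+\omega_3$ is not dominant. No amount of case-by-case verification will rescue this particular choice of $\alpha$.

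The paper's argument supplies the missing structural input: from the Bourbaki tables one sees that the fundamental weights of $R$ organize into $\text{Ind}(R)=|P/Q|$ chains under the dominance order, each with a (quasi-)minuscule fundamental weight as minimal element, and with consecutive members $\omega_{j'}<\omega_j$ of a chain differing by a positive short root $\omega_j-\omega_{j'}\in W\vartheta$. Given $\lambda$ with $\lambda_j>0$ and $\omega_j$ not (quasi-)minuscule, the predecessor is $\tilde\lambda:=\lambda-(\omega_j-\omega_{j'})$, which is manifestly dominant (one fundamental-weight coefficient drops by $1$, another rises by $1$) and satisfies $\tilde\lambda<\lambda$, hence lies in $P_c$. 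For $G_2$ the single chain is $\omega_1<\omega_2$ with $\omega_2-\omega_1=\alpha_1+\alpha_2\in W\vartheta$; this is the short positive root to subtract from $\lambda=\omega_2$, and it lies outside the parabolic on the support of $\lambda$.
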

\begin{proof}
From the tables in Bourbaki \cite{bou:groupes}, it is readily inferred that the fundamental weights $\omega_1,\ldots,\omega_n$ of $R$ can be grouped---by means of the dominance order on $P^+$---in $\text{Ind}(R)$ linearly ordered chains with minimal elements given by the (quasi-)minuscule fundamental weights. Subsequent fundamental weights in a chain  differ moreover by a root in $W\vartheta$.
The existence of the path claimed by the lemma is clear if the decomposition of $\lambda\in P_c$ in the basis of the fundamental weights
$$
\lambda=\lambda_1\omega_1+\cdots +\lambda_n\omega_n
$$
contains at most nonzero coefficients corresponding to fundamental weights that are either minuscule or quasi-minuscule. Otherwise, if $\lambda_j>0$ with $\omega_j$ neither minuscule nor quasi-minuscule, then the weight $\tilde{\lambda}$ obtained by subtracting the positive root $\omega_j-\omega_{j^\prime}\in W\vartheta$---where $\omega_{j^\prime}$ refers to the fundamental weight preceding $\omega_j$ in the respective chain---belongs to $ P_c$
 as
 $\tilde{\lambda}<\lambda$ (cf. the proof of Proposition \ref{meromorphy:prp}). The lemma now follows by induction with respect to the dominance order on $P^+$.
\end{proof}

After these preparations the computation of the quadratic norms is standard.

\begin{proposition}[Normalization] For any $\lambda\in P_c$, the quadratic norm of the normalized Macdonald polynomial $P_\lambda$ \eqref{P-reno}, \eqref{specialization:trigonometric} is given by
\begin{equation}
\langle P_\lambda , P_\lambda\rangle_{\hat{\Delta}} = \frac{\mathcal{N}_0}{\Delta (\lambda) }\quad \text{with}
\quad \mathcal{N}_0=\langle 1 ,1\rangle_{\hat{\Delta}} .
\end{equation}
\end{proposition}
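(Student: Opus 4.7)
The plan is to establish a one-step recurrence
$$\Delta(\lambda+\nu_0)\,\langle P_{\lambda+\nu_0},P_{\lambda+\nu_0}\rangle_{\hat\Delta} = \Delta(\lambda)\,\langle P_\lambda,P_\lambda\rangle_{\hat\Delta}$$
valid whenever $\lambda,\lambda+\nu_0\in P_c$ and $\nu_0$ is either a minuscule fundamental weight of $R$ or a positive root in $W\vartheta$, and then to iterate it along the (quasi-)minuscule path furnished by Lemma \ref{path:lem}, initialised at $\lambda=0$, where $P_0=1$, $\Delta(0)=1$, and $\langle 1,1\rangle_{\hat\Delta}=\mathcal{N}_0$ tautologically.

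First I would pair the finite Pieri identity \eqref{pieri} for a (quasi-)minuscule $\omega\in P^+$ with $P_{\lambda+\nu_0}$ in $\langle\cdot,\cdot\rangle_{\hat\Delta}$. By the orthogonality of Proposition \ref{orthogonal:prp}, only the $\nu=\nu_0$ term on the right survives; in the quasi-minuscule case the additional diagonal contribution $\hat U_{0,\nu}(\rho_{\text{g}}+\lambda)P_\lambda$ (and likewise the $\hat\epsilon_{\omega,0}P_\lambda$ piece inside $\hat E_\omega P_\lambda$) is orthogonal to $P_{\lambda+\nu_0}$ since $\nu_0\neq 0$. This gives
$$\langle \hat E_\omega P_\lambda, P_{\lambda+\nu_0}\rangle_{\hat\Delta} = \hat V_{\nu_0}(\rho_{\text{g}}+\lambda)\,\langle P_{\lambda+\nu_0},P_{\lambda+\nu_0}\rangle_{\hat\Delta}.$$

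Next I would recompute the same inner product using that the multiplication operator $P\mapsto \hat E_\omega P$ on $\ell^2(\hat\rho_{\text{g}}+\hat P_c,\hat\Delta)$ has adjoint $P\mapsto \overline{\hat E_\omega}\,P=\hat E_{\omega^*}P$ (the identity $\overline{\hat E_\omega}=\hat E_{\omega^*}$ follows from \eqref{spectrum} because $\overline{\hat m_\omega}=\hat m_{\omega^*}$ and the factors $t_\alpha^{\langle\eta,\alpha^\vee\rangle}$ are unitary). Invoking the Pieri identity a second time for $\hat E_{\omega^*}P_{\lambda+\nu_0}$ and retaining only the $P_\lambda$-component produces
$$\langle P_\lambda, \hat E_{\omega^*} P_{\lambda+\nu_0}\rangle_{\hat\Delta} = \overline{\hat V_{-\nu_0}(\rho_{\text{g}}+\lambda+\nu_0)}\,\langle P_\lambda,P_\lambda\rangle_{\hat\Delta} = \hat V_{-\nu_0}(\rho_{\text{g}}+\lambda+\nu_0)\,\langle P_\lambda,P_\lambda\rangle_{\hat\Delta},$$
where the last equality uses that $\hat V_{\pm\nu_0}$ is a ratio of sines of real arguments in the current regime. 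Equating the two expressions yields the balance
$$\hat V_{\nu_0}(\rho_{\text{g}}+\lambda)\,\langle P_{\lambda+\nu_0},P_{\lambda+\nu_0}\rangle_{\hat\Delta} = \hat V_{-\nu_0}(\rho_{\text{g}}+\lambda+\nu_0)\,\langle P_\lambda,P_\lambda\rangle_{\hat\Delta}.$$

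Finally, I would apply the dual version of Lemma \ref{recurrence:lem} (with the roles of $R$ and $\hat R$ swapped), which reads
$$\Delta(\lambda+\nu_0)\,\hat V_{-\nu_0}(\rho_{\text{g}}+\lambda+\nu_0) = \Delta(\lambda)\,\hat V_{\nu_0}(\rho_{\text{g}}+\lambda),$$
and divide by $\hat V_{\nu_0}(\rho_{\text{g}}+\lambda)>0$ (positivity as in Remark \ref{positivity:rem}) to extract the one-step recurrence. Lemma \ref{path:lem} lets us connect $0$ to any $\lambda\in P_c$ through such increments within $P_c$, and iteration gives $\Delta(\lambda)\langle P_\lambda,P_\lambda\rangle_{\hat\Delta}=\Delta(0)\langle 1,1\rangle_{\hat\Delta}=\mathcal N_0$, as claimed.

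The main obstacle I anticipate is the bookkeeping for the quasi-minuscule Pieri identity—namely, carefully separating the diagonal $\hat\epsilon_{\omega,0}$ and $\hat U_{0,\nu}$ contributions so they do not pollute the off-diagonal matrix entries used in the recurrence—and checking the reality/positivity of $\hat V_{\pm\nu_0}$ at the relevant arguments, for which the moment bounds in Lemma \ref{mb:lem} confirm that all sine arguments stay in $(0,\pi)$.
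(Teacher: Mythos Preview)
Your proposal is correct and follows essentially the same route as the paper's proof: both pair the finite Pieri identity for a (quasi-)minuscule $\omega\in P^+$ with $P_{\lambda+\nu}$, use the adjointness $\langle \hat E_\omega P_\lambda,P_{\lambda+\nu}\rangle_{\hat\Delta}=\langle P_\lambda,\hat E_{\omega^*}P_{\lambda+\nu}\rangle_{\hat\Delta}$ together with orthogonality to obtain the balance $\hat V_\nu(\rho_{\text{g}}+\lambda)\langle P_{\lambda+\nu},P_{\lambda+\nu}\rangle_{\hat\Delta}=\hat V_{-\nu}(\rho_{\text{g}}+\lambda+\nu)\langle P_\lambda,P_\lambda\rangle_{\hat\Delta}$, invoke the dual of Lemma~\ref{recurrence:lem} to convert this into the translational invariance of $\Delta(\lambda)\langle P_\lambda,P_\lambda\rangle_{\hat\Delta}$, and then iterate along the path of Lemma~\ref{path:lem}. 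Your extra bookkeeping for the quasi-minuscule diagonal terms and the explicit reality argument for $\hat V_{\pm\nu_0}$ are fine but not strictly needed, since the paper's formulation directly isolates the $\nu\in W\omega$ contribution (where $\hat U_{\nu,\nu}=1$) via orthogonality.
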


\begin{proof}
 For $\omega\in P^+$  (quasi)-minuscule, $\lambda\in P_c$ and $\nu\in W\omega$  such that $\lambda+\nu\in P_c$, an expansion of the products on both sides of the identity
 \begin{equation*}
   \langle \hat E_\omega P_\lambda, P_{\lambda+\nu} \rangle_{\hat{\Delta}} =    \langle P_\lambda, \hat E_{\omega^*} P_{\lambda+\nu} \rangle_{\hat{\Delta}}
 \end{equation*}
by means of the corresponding RHS of the Pieri formula \eqref{pieri} entails (using the orthogonality of
Proposition \ref{orthogonal:prp}) that
\begin{equation*}
  \hat V_\nu(\lambda+\rho_{\text{g}})    \langle P_{\lambda+\nu}, P_{\lambda+\nu} \rangle_{\hat{\Delta}}
  =      \hat V_{-\nu}(\lambda+\nu+\rho_{\text{g}})   \langle P_\lambda, P_\lambda \rangle_{\hat{\Delta}}
\end{equation*}
(because  $\eta=\nu$ if $\nu\in W\omega$, and $\hat{U}_{\nu,\nu}=1$).
This relation can be recasted---with the aid of the dual version of the recurrence in Lemma \ref{recurrence:lem}---in terms of the following translational symmetry:
\begin{equation*}
 \Delta(\lambda+\nu)\langle P_{\lambda+\nu}, P_{\lambda+\nu} \rangle_{\hat{\Delta}}
=
\Delta(\lambda) \langle P_\lambda, P_\lambda \rangle_{\hat{\Delta}}  .
\end{equation*}
By applying the translational symmetry along the increments of a path in
Lemma \ref{path:lem}, we conclude that $\Delta(\lambda)\langle P_{\lambda}, P_{\lambda} \rangle_{\hat{\Delta}}$ is equal to  $\langle 1, 1 \rangle_{\hat{\Delta}}$ (and thus independent of $\lambda$), i.e.
$
\langle P_{\lambda}, P_{\lambda} \rangle_{\hat{\Delta}}
=\langle 1,1\rangle_{\hat{\Delta}}  /\Delta(\lambda)$.
\end{proof}

\subsection{Total mass of the weight function} In this subsection we  momentarily allow the multiplicity parameter $\text{g}$ to be negative and even complex valued.
When $\hat{R}=R^\vee$ the product formula for the total mass of the weight function  in Section \ref{sec3} follows from the trigonometric identity in \cite[\text{Rem.}~4.6]{maz:finite}. This trigonometric identity was obtained by truncating a basic hypergeometric summation formula due to Aomoto, Ito and Macdonald
\cite{aom:elliptic,ito:symmetry,mac:formal}. It is straightforward to adapt the techniques of Ref. \cite{maz:finite} to incorporate the case that $\hat{R}=R$. Indeed, the appropriate Aomoto-Ito-Macdonald sum for our purposes reads (cf. \cite[\text{Sec.}~1]{aom:elliptic}, \cite[\text{Sec.}~4]{ito:symmetry}, and
\cite[\text{Sec.}~9]{mac:formal}):
\begin{subequations}
\begin{multline}\label{AIM-S}
 \sum_{\lambda\in P}
q^{-2\langle   \hat \rho_{\text{g}}, \lambda\rangle }
 \prod_{\alpha\in R^+}
\Bigl(
\frac  {  1-q_\alpha ^{\langle   x+\lambda ,\alpha^\vee\rangle}}
          {  1- q_\alpha^{\langle    x,\alpha^\vee \rangle}  }
\Bigr)
 \frac{  (q_\alpha^{\langle   x , \alpha^\vee\rangle + {\text{g}}_\alpha } ; q_\alpha)_{\langle   \lambda, \alpha^\vee\rangle }}
         { (q_\alpha^{\langle    x ,\alpha^\vee\rangle+1 -{\text{g}}_\alpha} ; q_\alpha)_{\langle  \lambda, \alpha^\vee\rangle }  }
\\
=
\mathcal N
\prod_{\alpha\in R}
\frac  {  (q_\alpha^{\langle  x,\alpha^\vee\rangle + 1} ; q_\alpha)_\infty }
         { (q_\alpha^{\langle   x,\alpha^\vee\rangle + 1 - {\text{g}}_\alpha} ; q_\alpha)_\infty    } ,
\end{multline}
with $x\in E$, $0<q<1$ and
\begin{equation}\label{AIM-P}
\mathcal N:= \text{Ind}(R)\prod_{\alpha\in \hat{R}^+}
  \frac{(q_\alpha^{-\langle  \hat\rho_{\text{g}}, \alpha^\vee\rangle +1 - {\text{g}}_\alpha},
                q_\alpha^{-\langle   \hat\rho_{\text{g}}, \alpha^\vee\rangle + {\text{g}}_\alpha + \delta_\alpha} ; q_\alpha)_\infty }
     {( q_\alpha^{-\langle   \hat\rho_{\text{g}}, \alpha^\vee\rangle +1}, q_\alpha^{-\langle  \hat \rho_{\text{g}}, \alpha^\vee\rangle} ; q_\alpha)_\infty } ,
\end{equation}
\end{subequations}
where the value of $\delta_\alpha$ is $1$ if $\alpha$ is simple and $0$ otherwise. Here we have employed the convention that  $(a;q)_m:=(a;q)_\infty/(aq^m;q)_\infty$ for $m<0$.
The basic hypergeometric sum in Eqs. \eqref{AIM-S}, \eqref{AIM-P}
is normalized such that the term on the LHS is equal to $1$ when $\lambda=0$. To ensure convergence and avoid poles
it is assumed that
$\text{g}<0$ and that for all $\alpha\in R$: $\langle x, \alpha^\vee\rangle\neq 0$ and ${\text{g}}_\alpha-\langle x, \alpha^\vee\rangle  \not\in \mathbb{N}$.

At $x=\rho_{\text{g}}$ with $\text{g}<0$ such that
${\text{g}}_\alpha-\langle \rho_{\text{g}}, \alpha^\vee\rangle  \not\in \mathbb{N}$ for all $\alpha\in R^+$,
this Aomoto-Ito-Macdonald sum reduces to a sum over the dominant weights:
\begin{multline}\label{AIM+}
 \sum_{\lambda\in   P^+}
q^{-2\langle   \hat \rho_{\text{g}} , \lambda\rangle }
 \prod_{\alpha\in R^+}
\Bigl(
\frac  {  1-q_\alpha ^{\langle  \rho_{\text{g}}+\lambda , \alpha^\vee \rangle}}
          {  1- q_\alpha^{\langle    \rho_{\text{g}} ,  \alpha^\vee\rangle}  }
\Bigr)
 \frac{  (q_\alpha^{\langle   \rho_{\text{g}},  \alpha^\vee  \rangle + {\text{g}}_\alpha } ; q_\alpha)_{\langle   \lambda, \alpha^\vee\rangle }}
         { (q_\alpha^{\langle \rho_{\text{g}}, \alpha^\vee \rangle+1 - {\text{g}}_\alpha} ; q_\alpha)_{\langle  \lambda,\alpha^\vee\rangle }  }  \\
=\text{Ind}(R) \prod_{\alpha\in R^+}
  \frac{( q_\alpha^{\langle  \rho_{\text{g}},  \alpha^\vee\rangle +1}; q_\alpha)_\infty}
          {  (q_\alpha^{-\langle\hat \rho_{\text{g}},   \hat \alpha^\vee \rangle}; q_\alpha)_\infty }
 \prod_{\alpha\in R^+\backslash I}
  \frac{(q_\alpha^{-\langle \hat\rho_{\text{g}},  \hat \alpha^\vee \rangle  + {\text{g}}_\alpha}; q_\alpha)_\infty}
          {(q_\alpha^{\langle \rho_{\text{g}},  \alpha^\vee\rangle+1 - {\text{g}}_\alpha}; q_\alpha)_\infty}  ,
 \end{multline}
where $I\subseteq R^+$ refers to the basis of the simple roots. Indeed,
for $\lambda\not\in  P^+$ there exists a simple root $\beta$ such that $\langle \lambda,\beta^\vee\rangle<0$. The term on the LHS then picks up a zero
 from the factor
$$
\frac{1}{(q_{\beta}^{\langle   \rho_{\text{g}}, \beta^\vee \rangle+1 - g_\beta} ; q_\beta)_{\langle  \lambda,\beta^\vee\rangle } }
=\frac{1}{(q_{\beta};q_{\beta})_{\langle  \lambda,\beta^\vee\rangle } }=0 .
$$

By exploiting the analyticity in $\text{g}$, the summation formula in Eq. \eqref{AIM+} can be extended to complex $\text{g}$ with $\text{Re}(\text{g})<0$ such that ${\text{g}}_\alpha-\langle \rho_{\text{g}}, \alpha^\vee\rangle  \not\in (\mathbb{N}+2\pi i \mathbb{Z}/\log q_\alpha )$ for all $\alpha\in R^+$. Upon choosing such $\text{g}$ subject to the additional constraint  that
$h_{\text{g}}+c=2\pi i/\log q_\varphi$, so $q_\varphi^{h_{\text{g}}+c}=1$, the LHS of Eq. \eqref{AIM+} truncates to a finite sum of the form:
$$
 \sum_{\lambda\in   P_c}
q^{-2\langle  \hat \rho_{\text{g}} ,  \lambda \rangle }
 \prod_{\alpha\in R^+}
\Bigl(
\frac  {  1-q_\alpha ^{\langle  \rho_{\text{g}}+\lambda ,  \alpha^\vee\rangle}}
          {  1- q_\alpha^{\langle   \rho_{\text{g}} ,  \alpha^\vee\rangle}  }
\Bigr)
 \frac{  (q_\alpha^{\langle   \rho_{\text{g}} ,  \alpha^\vee \rangle + {\text{g}}_\alpha } ; q_\alpha)_{\langle   \lambda, \alpha^\vee\rangle }}
         { (q_\alpha^{\langle  \rho_{\text{g}} ,  \alpha^\vee\rangle+1 - {\text{g}}_\alpha} ; q_\alpha)_{\langle  \lambda,\alpha^\vee\rangle }  } ,
$$
since  for $\lambda\in P^+\setminus P_c$ one has that
$\langle \lambda, \hat \psi^\vee\rangle >  c$, i.e. the corresponding term on the LHS of Eq. \eqref{AIM+} picks up a zero from the factor
$$
( q_{\hat\psi}^{\langle   \rho_{\text{g}} , \hat \psi^\vee\rangle +g_\psi} ; q_{\hat\psi})_{\langle   \lambda, \hat \psi^\vee\rangle }
=(q_\varphi^{h_{\text{g}}};q_\varphi)_{\langle   \lambda, \hat \psi^\vee\rangle}=
( q_\varphi^{-c} ; q_\varphi)_{\langle   \lambda, \hat \psi^\vee\rangle}
=0 .
$$
In this situation the RHS of Eq. \eqref{AIM+} can be reduced accordingly  to a quotient of finite $q$-factorials, upon canceling common factors in the numerator and the denominator with the aid of the relation $q_\varphi^{h_{\text{g}}+c}=1$. By passing from $q$-factorials to trigonometric factorials via the substitution $q=e^{2i\kappa}$ with $\kappa= \pi/(u_\varphi (h_{\text{g}}+c))$, one ends up with the summation formula
\begin{equation}
\sum_{\lambda\in P_c}\Delta (\lambda)=\text{Ind}(R)\mathcal{N}_c,
\end{equation}
with $\mathcal{N}_c$ given by the tables in Section \ref{sec3}.  The product formula for the total mass of the weight function then follows by continuing analytically to the parameter domain $\text{g}>0$.

\begin{remark}\label{E7-6c:rem}
Throughout the paper it was assumed that $c$ is not a proper multiple of $6$ when $R=E_7$. The reason being that in this particular situation simultaneous degenerations in the spectrum of $D_\omega$ \eqref{D-finite} with $\omega\in\hat{P}^+$ small do occur (cf. the appendix below), causing Lemma \ref{nondegeneracy:lem} (and thus the proof of Theorem \ref{main:thm}) to break down at this point (only). In fact, our proof of the orthogonality relations in Eq. \eqref{ort-rel2} applies verbatim in this situation for weights belonging to
$P_{\tilde{c}}\subseteq P_c$ with $\tilde{c}=\lceil\frac{11}{12}c\rceil$, in view of Remark \ref{E7-degenerations}. To extend the proof under consideration to the complete basis of Macdonald polynomials for $\ell^2(\hat{P}_c,\hat{\Delta})$, one more independent commuting difference operator is needed to separate the spectrum. In principle the complete algebra of commuting difference operators containing the explicit Macdonald difference operators $\mathcal{D}_\omega$ \eqref{D}--\eqref{U} can be obtained from Cherednik's representation of the double affine Hecke algebra \cite[\text{Eqs.}~(4.4.12), (5.3.3)]{mac:affine}. In this approach it therefore suffices to verify that these difference operators restrict to normal operators in the  finite-dimensional Hilbert space $\ell^2(\hat{P}_c,\hat{\Delta})$, cf. \cite[\text{Secs.}~4.5, 5.3]{mac:affine}.

\end{remark}

\appendix

\section{Nondegeneracy of the Eigenvalues for exceptional root systems}\label{appA}
In this appendix the nondegeneracy of the eigenvalues in Lemma \ref{nondegeneracy:lem} is verified for the exceptional root systems. Specifically, we will check that if for certain $\lambda ,\mu \in P_c$ the equality
\begin{equation}\label{Ev-id}
\hat{m}_\omega (\rho_{\text{g}}+\lambda )= \hat{m}_\omega (\rho_{\text{g}}+\mu)
\end{equation}
holds for all $\omega\in \hat{P}^+$ small (as an identity in $\text{g}$), then necessarily $\lambda=\mu$.
This implies that the same holds true for the equality $E_\omega (\rho_{\text{g}}+\lambda )= E_\omega (\rho_{\text{g}}+\mu)$
in view of the triangularity of $E_\omega$ \eqref{spectrum} with respect to the monomial basis.

Since for $R$ exceptional the dual root system $R^\vee$ is isomorphic to $R$, the truncation relation in Remark~\ref{tr:rem} reads $t_\vartheta^{h/2}t_\varphi^{h/2}q_\varphi^c=1$ (with
$h$ being the Coxeter number of $R$). We write $\tilde{h}$ for the Coxeter number of
the simply laced subsystem $W\varphi\subseteq R$ (so $\tilde{h}=h$ if $R$ is simply laced and $\tilde{h}=h/2$---in our situation---if $R$ is multiply laced). Let us furthermore denote the primitive root of unity $e^{2\pi i/\tilde{h}}$ by $\varepsilon$.
Upon writing $\hat{m}_\omega(\rho_{\text{g}}+\lambda)=\sum_{\nu\in W\omega} q^{\langle \nu ,\lambda\rangle}\prod_{\alpha\in R^+}t_\alpha^{\langle\nu,\hat{\alpha}^\vee\rangle/2}$ and
elimination of $t_\vartheta$  by means of the relations $t_\vartheta =\varepsilon q_\varphi^{-c/\tilde{h}}$ if $R$ is simply laced or
$t_\vartheta  t_\varphi=\varepsilon q_\varphi^{-c/\tilde{h}}$
if $R$ is multiply laced, both sides of the equality in Eq. \eqref{Ev-id} become Laurent polynomials in $t_\varphi$
with coefficients built of terms that are products of powers of $\varepsilon$ and $q$ (so the Laurent polynomials in question are of degree zero if $R$ is simply laced).
For $R$ multiply laced both sides of Eq. \eqref{Ev-id} are equal as analytic functions in $\text{g}$ iff all coefficients of the corresponding Laurent polynomials in $t_\varphi$ match. (Indeed,
the polar angles of
$q=\exp (\frac{2\pi i}{u_\varphi (h_{\text{g}}+c)})$ and $t_\varphi=q^{u_\varphi \text{g}_\varphi}=\exp (\frac{2\pi i \text{g}_\varphi }{ h_{\text{g}}+c})$ are controlled  by two independent parameters $\text{g}_\vartheta$ and $\text{g}_\varphi$, so by varying these parameters over the positive reals the tuple of the respective angles
covers an open subset of $(0,\frac{2\pi }{u_\varphi c})\times (0,\frac{2\pi}{\tilde{h}})$.)

The expressions (for the coefficients of the Laurent polynomials in $t_\varphi$) on both sides of Eq. \eqref{Ev-id} are themselves polynomials in the primitive root of unity $\varepsilon$ of degree $\leq \tilde{h}-1$ (possibly up to an overall factor $\varepsilon^{1/2}$ when $\text{Ind}(R)> 1$), with coefficients that are sums of powers of $q$.
To eliminate linear dependencies between these roots of unity, the powers
$\varepsilon^{\phi(\tilde{h})},\ldots ,\varepsilon^{\tilde{h}-1}$---where $\phi$ refers to Euler's totient function counting the number of coprimes not exceeding its argument---are expressed in terms of the basis
$1,\varepsilon,\ldots,\varepsilon^{\phi(\tilde{h})-1}$ via their residues modulo
the cyclotomic polynomial $\Phi_{\tilde{h}}(\varepsilon)$ of degree $\phi (\tilde{h})$.
Upon differentiating the coefficients with respect to $q$ and subsequently evaluating at $q=1$,
a pairwise comparison of terms from both sides provides linear relations of the form $\langle\lambda-\mu,v\rangle =0$ with $v\in Q^\vee$ (where we exploit the fact that the roots of unity $1,\varepsilon,\ldots,\varepsilon^{\phi(\tilde{h})-1}$ are linearly independent over the rationals).
By varying over the different coefficients and small weights $\omega\in\hat{P}$, we
deduce this way that the equality in Eq. \eqref{Ev-id} implies that $\lambda-\mu$
must be orthogonal to $n$ ($=\text{rank}(R)$)
linearly independent vectors $v\in Q^\vee$ unless $R$ is of type $E_7$, whence  $\mu$ must be equal to $\lambda$ in these cases.

When $R$ is of type $E_7$, the relevant vectors $v\in Q^\vee$ turn out to span a hyperplane, viz.
the equality in Eq. \eqref{Ev-id} now permits to conclude only that $\lambda-\mu$
must belong to the line perpendicular to this hyperplane.
A comparison of the quadratic terms---obtained
by first applying the differential operator $(q\frac{ \text{d}}{\text{d}q})^2$ to the coefficients of the expression on both sides of Eq. \eqref{Ev-id} and then evaluating at $q=1$---under the additional assumption that $\mu$ differs from $\lambda$ by a {\em nonzero} vector belonging to this perpendicular line, now entails a nonhomogeneous linear system for $\lambda$. When $c$ is not a multiple of $6$, its (two-dimensional) solution space does not intersect $P$, whence
the equality in Eq. \eqref{Ev-id} still implies that $\lambda=\mu$ in this situation.

Below we identify for each exceptional root system (ordered by increasing rank), a minimal choice of small weights $\omega$ and the  corresponding coefficients of $\hat{m}_\omega (\rho_{\text{g}}+\lambda)$ giving rise to a maximal system of linearly independent vectors $v\in Q^\vee$ that are orthogonal to $\lambda-\mu$ when Eq. \eqref{Ev-id} holds.
Here the weights $\lambda$ (and $\mu$) will be expressed in the basis of fundamental weights
$\lambda=\lambda_1\omega_1+\cdots +\lambda_n\omega_n$, and the relevant vectors $v\in Q^\vee$ will be represented by the components $(v_1, v_2, \ldots ,v_n)$  with respect to the dual basis of simple coroots (i.e. $v=v_1\alpha_1^\vee+\cdots+v_n\alpha_n^\vee$).
In each case, the normalization of the root system, the choice of the positive subsystem, and the numbering of the elements of the simple and fundamental bases will follow the conventions of the tables in Bourbaki \cite{bou:groupes}.
We end the appendix by providing some details regarding the additional analysis of the quadratic terms required to rule out the degeneracies when $R$ is of type $E_7$.

\subsection{Type $\boldsymbol{G}$} The quasi-minuscule weight $\omega$ of $\hat{R}$ is equal to
$\varphi^\vee$ if $\hat{R}=R^\vee$ and equal to $\vartheta$ is $\hat{R}=R$. For $R$ of type $G_2$, the corresponding monomials  $\hat{m}_{\omega}(\rho_{\text{g}}+\lambda)$ are of the form $\hat{m}_{\omega}(\rho_{\text{g}}+\lambda)=\hat{m}_{\omega}^+(\rho_{\text{g}}+\lambda)+\overline{\hat{m}_{\omega}^+(\rho_{\text{g}}+\lambda)}$ with
\begin{align*}
\hat{m}_{\varphi^\vee}^+(\rho_{\text{g}}+\lambda)&=
t_\vartheta t_\varphi^2q^{\lambda_1+2\lambda_2}+t_\vartheta t_\varphi q^{\lambda_1+\lambda_2}+t_\varphi q^{\lambda_2}
\qquad (\hat{R}=R^\vee ), \\
\hat{m}_{\vartheta}^+(\rho_{\text{g}}+\lambda)&=
t_\vartheta^2 t_\varphi q^{2\lambda_1+3\lambda_2}+t_\vartheta t_\varphi q^{\lambda_1+3\lambda_2}+t_\vartheta q^{\lambda_1}
\qquad (\hat{R}=R ) .
\end{align*}
We have that $\tilde{h}=3$ and $\varepsilon=e^{2\pi i/3}$. Elimination of $t_\vartheta$
via the truncation relation $t_\vartheta t_\varphi =\varepsilon q_\varphi^{-c/3}$
and calculation of the residues modulo the cyclotomic polynomial $\Phi_3(\varepsilon)=\varepsilon^2+\varepsilon+1$ gives
\begin{align*}
\hat{m}_{\varphi^\vee} (\rho_{\text{g}}+\lambda)&=
( q^{\lambda_2} + \varepsilon q^{ \lambda_1 + 2\lambda_2 -\frac{c}{3}}   ) t_\varphi
+(   q^{-\lambda_2} -q^{-\lambda_1- 2 \lambda_2 + \frac{c}{3}}  -  \varepsilon q^{ -\lambda_1 - 2\lambda_2 + \frac{c}{3}} ) t_\varphi^{-1} \\
& + ( - q^{ -\lambda_1 -  \lambda_2 + \frac{c}{3}} + \varepsilon (q^{ \lambda_1 + \lambda_2 -\frac{c}{3}}-q^{ -\lambda_1 - \lambda_2 + \frac{c}{3}}) ) \qquad (\hat{R}=R^\vee )
  \end{align*}
and
\begin{align*}
\hat{m}_{\vartheta} (\rho_{\text{g}}+\lambda)&=
(   -q^{ - \lambda_1 +\frac{c}{3}u_\varphi} +  \varepsilon (q^{  -2 \lambda_1  - 3 \lambda_2 + \frac{2c}{3}u_\varphi }   -  q^{ - \lambda_1 +\frac{c}{3}u_\varphi})  ) t_\varphi \\
&+ (  - q^{ 2 \lambda_1 + 3  \lambda_2 -\frac{2c}{3}u_\varphi}   +  \varepsilon  (q^{ \lambda_1 -\frac{c}{3}u_\varphi } -  q^{ 2 \lambda_1  + 3 \lambda_2  -\frac{2c}{3}u_\varphi}) ) t_\varphi^{-1} \\
&+    (  -q^{-\lambda_1- 3 \lambda_2 +\frac{c}{3}u_\varphi }     +  \varepsilon(q^{ \lambda_1 + 3 \lambda_2  -\frac{c}{3}u_\varphi}  -  q^{ - \lambda_1  - 3 \lambda_2  + \frac{c}{3}u_\varphi})   ) \qquad (\hat{R}=R ) .
\end{align*}
Differentiation with respect to $q$ of
the coefficients of the Laurent polynomials in $t_\varphi$ on both sides of
 Eq. \eqref{Ev-id} and subsequent evaluation at $q=1$ leads---upon
comparing the coefficients of
$t_\varphi$ and $\varepsilon t_\varphi$ from both sides---to
the relations $\lambda_2=\mu_2$, $\lambda_1+2\lambda_2=\mu_1+2\mu_2$ if $\hat{R}=R^\vee$ and
$\lambda_1=\mu_1$, $\lambda_1+3\lambda_2=\mu_1+3\mu_2$ if $\hat{R}=R$. In other words,
the equality in Eq. \eqref{Ev-id} implies that $\lambda-\mu$ must be orthogonal to $\alpha_2^\vee$ and
$\alpha_1^\vee+2\alpha_2^\vee$ if $\hat{R}=R^\vee$ and to
$\alpha_1^\vee$ and
$\alpha_1^\vee+3\alpha_2^\vee$ if $\hat{R}=R$.
In both cases, the equality in  Eq. \eqref{Ev-id}
therefore holds only when
$\lambda=\mu$.

\subsection{Type $\boldsymbol{F}$}
Proceeding as for $G_2$, we compute for $\omega\in\hat{P}^+$ quasi-minuscule $\hat{m}_{\omega}(\rho_{\text{g}}+\lambda)=\hat{m}_{\omega}^+(\rho_{\text{g}}+\lambda)+\overline{\hat{m}_{\omega}^+(\rho_{\text{g}}+\lambda)}$, with $\omega=\varphi^\vee$ and
\begin{eqnarray*}
\lefteqn{\hat{m}_{\varphi^\vee}^+(\rho_{\text{g}}+\lambda)=}
&&\\
&& t_\vartheta ^3 t_\varphi^5 q^{2 \lambda_1 +3 \lambda_2+2 \lambda_3+\lambda_4}
+ t_\vartheta^3 t_\varphi^4 q^{\lambda_1 +3 \lambda_2+2 \lambda_3+\lambda_4}
+ t_\vartheta^3 t_\varphi^3 q^{\lambda_1 +2 \lambda_2+2 \lambda_3+\lambda_4}\\
&&+ t_\vartheta^2 t_\varphi^3 q^{\lambda_1 +2 \lambda_2+\lambda_3+\lambda_4}
+ t_\vartheta^2 t_\varphi^2 q^{\lambda_1 +\lambda_2+\lambda_3+\lambda_4}
+ t_\vartheta t_\varphi^3 q^{\lambda_1 +2 \lambda_2+\lambda_3}
+ t_\vartheta^2 t_\varphi q^{\lambda_2+\lambda_3+\lambda_4}\\
&&+ t_\vartheta t_\varphi^2 q^{\lambda_1 +\lambda_2+\lambda_3}
+ t_\vartheta t_\varphi q^{\lambda_2+\lambda_3}
+ t_\varphi^2 q^{\lambda_1 +\lambda_2}
+ t_\varphi (q^{\lambda_1}+ q^{\lambda_2})
\end{eqnarray*}
if $\hat{R}=R^\vee$, and with $\omega=\vartheta$ and
\begin{eqnarray*}
\lefteqn{\hat{m}_{\vartheta}^+ (\rho_{\text{g}}+\lambda)=}&& \\
&& t_\vartheta^5 t_\varphi^3 q^{2 \lambda_1 +4 \lambda_2+3 \lambda_3+2\lambda_4}
+ t_\vartheta^4 t_\varphi^3 q^{2\lambda_1 +4\lambda_2+3 \lambda_3+\lambda_4}
+ t_\vartheta^3 t_\varphi^3 q^{2\lambda_1 +4 \lambda_2+2 \lambda_3+\lambda_4}\\
&&+  t_\vartheta^3t_\varphi^2 q^{2\lambda_1 +2 \lambda_2+2\lambda_3+\lambda_4}
+ t_\vartheta^3 t_\varphi q^{2\lambda_2+2\lambda_3+\lambda_4}
+ t_\vartheta^2 t_\varphi^2 q^{ 2\lambda_1 +2 \lambda_2+\lambda_3+\lambda_4}\\
&&+ t_\vartheta^2 t_\varphi q^{2\lambda_2+\lambda_3+\lambda_4}
+ t_\vartheta t_\varphi^2 q^{2\lambda_1 +2\lambda_2+\lambda_3}
+ t_\vartheta^2 q^{\lambda_3+\lambda_4}
+ t_\vartheta t_\varphi q^{2\lambda_2 +\lambda_3}
+ t_\vartheta (q^{\lambda_3}
+ q^{\lambda_4})
\end{eqnarray*}
if $\hat{R}=R$. In the present case $\tilde{h}=6$, $\varepsilon=e^{2\pi i/6}$, and elimination of $t_\vartheta$
via $t_\vartheta t_\varphi =\varepsilon q_\varphi^{-c/6}$ yields
modulo the cyclotomic polynomial $\Phi_6(\varepsilon)=\varepsilon^2-\varepsilon+1$:
\begin{multline*}\hat{m}_{\varphi^\vee}(\rho_{\text{g}}+\lambda)=
\Bigl(q^{ \lambda_1 + \lambda_2 }  - q^{2\lambda_1 + 3\lambda_2 + 2 \lambda_3 + \lambda_4 - \frac{c}{2}}  + \varepsilon q^{\lambda_1 + 2\lambda_2 + \lambda_3 -\frac{c}{6}}   \Bigr) t_\varphi^2
\\
+
\Bigl(q^{ \lambda_1} +q^{\lambda_2}  -q^{ \lambda_1+2  \lambda_2+\lambda_3+\lambda_4 -\frac{c}{3}}-q^{\lambda_1+3  \lambda_2+2  \lambda_3+\lambda_4 -\frac{c}{2}}
\\   + \varepsilon  (q^{\lambda_1+2  \lambda_2+\lambda_3+\lambda_4 -\frac{c}{3}}+q^{\lambda_1+\lambda_2+\lambda_3 -\frac{c}{6}}-q^{-\lambda_2-\lambda_3-\lambda_4 +\frac{c}{3}})   \Bigr)    t_\varphi
\\
\Bigl(
-q^{-\lambda_1-2  \lambda_2-2  \lambda_3-\lambda_4 +\frac{c}{2} }+q^{-\lambda_2-\lambda_3 +\frac{c}{6}}-q^{\lambda_1+\lambda_2+\lambda_3+\lambda_4   -\frac{c}{3}  }-q^{\lambda_1+2  \lambda_2+2  \lambda_3+\lambda_4 -\frac{c}{2}}  +  \\
  \varepsilon (q^{\lambda_1+\lambda_2+\lambda_3+\lambda_4 - \frac{c}{3}}+q^{\lambda_2+\lambda_3  -\frac{c}{6} } - q^{-\lambda_2-\lambda_3 +  \frac{c}{6}}-q^{-\lambda_1-\lambda_2-\lambda_3-\lambda_4 +  \frac{c}{3}})
\Bigr)
+
\\
\Bigl(
q^{-\lambda_1-\lambda_2-\lambda_3  + \frac{c}{6}  }-q^{-\lambda_1-3  \lambda_2-2  \lambda_3-\lambda_4  + \frac{c}{2}  } + q^{-\lambda_2} + q^{-\lambda_1} - q^{ \lambda_2+\lambda_3+\lambda_4 -  \frac{c}{3}  }
\\ + \varepsilon (-q^{  -\lambda_1-2  \lambda_2-\lambda_3-\lambda_4  + \frac{c}{3} }+q^{\lambda_2+\lambda_3+\lambda_4   - \frac{c}{3}}-q^{-\lambda_1-\lambda_2-\lambda_3   + \frac{c}{6}})
\Bigr) t_\varphi^{-1}
\\
+
\Bigl(
-q^{-2  \lambda_1-3  \lambda_2-2  \lambda_3-\lambda_4  + \frac{c}{2}}+q^{-\lambda_1-2  \lambda_2-\lambda_3  +  \frac{c}{6}}+q^{-\lambda_1-\lambda_2}  -\varepsilon q^{-\lambda_1-2  \lambda_2-\lambda_3  + \frac{c}{6}}
\Bigr)  t_\varphi^{-2}
 \end{multline*}
if $\hat{R}=R^\vee$, and
\begin{multline*}\hat{m}_{\vartheta} (\rho_{\text{g}}+\lambda)=
\Bigl(
- q^{ -2  \lambda_2-2  \lambda_3-\lambda_4  +  \frac{c}{2} u_\varphi }
+ \varepsilon ( q^{ -2  \lambda_1-4  \lambda_2-3  \lambda_3-2  \lambda_4 + \frac{5c}{6}u_\varphi } -q^{-\lambda_3-\lambda_4  +\frac{c}{3}u_\varphi })
    \Bigr)   t_\varphi^2
+
\\
\Bigl(
q^{-\lambda_4  +\frac{c}{6} u_\varphi }+q^{-\lambda_3  + \frac{c}{6}u_\varphi }-q^{-2 \lambda_1-2 \lambda_2-2 \lambda_3-\lambda_4  +\frac{c}{2}u_\varphi }-q^{-2 \lambda_1-4 \lambda_2-3 \lambda_3-\lambda_4 + \frac{2c}{3}u_\varphi }+
\\
 \varepsilon (q^{2 \lambda_1+2 \lambda_2+\lambda_3  -\frac{c}{6}u_\varphi } - q^{-\lambda_4  + \frac{c}{6}u_\varphi } - q^{-\lambda_3 + \frac{c}{6}u_\varphi } - q^{-2 \lambda_2-\lambda_3-\lambda_4 +  \frac{c}{3}u_\varphi } + q^{-2 \lambda_1-4 \lambda_2-3 \lambda_3-\lambda_4 +\frac{2c}{3}u_\varphi })
   \Bigr)  t_\varphi
\\
+
\Bigl(
- q^{2  \lambda_1+4  \lambda_2+2  \lambda_3+\lambda_4  -\frac{c}{2}u_\varphi } - q^{2  \lambda_1+2  \lambda_2+\lambda_3+\lambda_4  - \frac{c}{3}u_\varphi } + q^{-2  \lambda_2-\lambda_3  + \frac{c}{6}u_\varphi } - q^{-2  \lambda_1-4  \lambda_2-2  \lambda_3-\lambda_4 + \frac{c}{2}u_\varphi }
\\  \varepsilon (q^{2  \lambda_2+\lambda_3  -\frac{c}{6}u_\varphi } + q^{ 2  \lambda_1+2  \lambda_2+\lambda_3+\lambda_4  - \frac{c}{3}u_\varphi } - q^{-2  \lambda_1-2  \lambda_2-\lambda_3-\lambda_4 + \frac{c}{3}u_\varphi } - q^{-2  \lambda_2-\lambda_3 + \frac{c}{6}u_\varphi })
   \Bigr)
\\
+
\Bigl(
 q^{-2  \lambda_1-2  \lambda_2-\lambda_3  + \frac{c}{6}u_\varphi }-q^{2  \lambda_1+2  \lambda_2+2  \lambda_3+\lambda_4  -\frac{c}{2}u_\varphi } - q^{2  \lambda_2+\lambda_3+\lambda_4   - \frac{c}{3}u_\varphi } +
\\
 \varepsilon (  q^{\lambda_3-\frac{c}{6}u_\varphi } + q^{\lambda_4 - \frac{c}{6}u_\varphi } + q^{2  \lambda_2+\lambda_3+\lambda_4 -\frac{c}{3}u_\varphi }  - q^{-2  \lambda_1-2  \lambda_2-\lambda_3 + \frac{c}{6}u_\varphi } -  q^{2  \lambda_1+4  \lambda_2+3  \lambda_3+\lambda_4 -\frac{2c}{3}u_\varphi }  )
   \Bigr)  t_\varphi^{-1}
\\
+
\Bigl(
q^{2 \lambda_1+4  \lambda_2+3  \lambda_3+2  \lambda_4  -  \frac{5c}{6}u_\varphi } - q^{2  \lambda_2+2  \lambda_3+\lambda_4 - \frac{c}{2}u_\varphi } - q^{ \lambda_3 + \lambda_4 - \frac{c}{3}u_\varphi }
\\
+\varepsilon ( q^{ \lambda_3 + \lambda_4 - \frac{c}{3}u_\varphi } - q^{ 2  \lambda_1+4  \lambda_2+3  \lambda_3+2  \lambda_4 -\frac{5c}{6}u_\varphi  } )
   \Bigr)  t_\varphi^{-2}
 \end{multline*}
 if $\hat{R}=R$. Comparison of the coefficients of $t_\varphi$,  $t_\varphi^2$, $\varepsilon t_\varphi$  and $\varepsilon t_\varphi^2$ on both sides of Eq. \eqref{Ev-id} now leads (upon differentiation at $q=1$) to
the following linearly independent vectors
$v\in Q^\vee$ that are orthogonal to $\lambda-\mu$ if the equality holds:  $(1, 4, 3, 2)$, $(1, 2, 2, 1)$, $(2, 4, 3, 2)$ and $(1, 2, 1, 0)$ if $\hat{R}=R^\vee$, and
$(4,6,4,  1)$,  $(0,2,2,1)$, $(0,0,0,1)$ and $(2,4,2,1)$ if $\hat{R}=R$ (where---recall---the components are with respect to the basis of simple coroots of $R$).

\subsection{Type $\boldsymbol{E}$}
For $R$ of type $E_6$, one has that $\tilde{h}=h=12$, so
$t_\vartheta=\varepsilon q^{-c/12}$  with $\varepsilon=e^{2\pi i/12}$, and the relevant cyclotomic polynomial
is $\Phi_{12}(\varepsilon )=\varepsilon^4-\varepsilon^2+1$. We consider $\hat{m}_\omega (\rho_{\text{g}}+\lambda)$ with $\omega$ being equal either to the minuscule weight $\omega_6$ or to the quasi-minuscule weight $\omega_2=\varphi$. In the minuscule case the LHS of Eq. \eqref{Ev-id} becomes explicitly:
\begin{eqnarray*}
\lefteqn{  \hat{m}_{\omega_6}(\rho_{\text{g}}+\lambda)
=} && \\
&& \varepsilon^{11} \bigl(q^{\frac{1}{3} (-\lambda_1 -2 \lambda_3-\lambda_5 +\lambda_6) +\frac{c}{12}}
   + q^{\frac{1}{3} (-\lambda_1 +\lambda_3-\lambda_5 -2 \lambda_6)   + \frac{c}{12}}\bigr)
\\
&&+ \varepsilon^{10}
\bigl(q^{\frac{1}{3} (-\lambda_1 -2 \lambda_3-3 \lambda_4-\lambda_5 +\lambda_6 )+\frac{c}{6}}
  +  q^{\frac{1}{3} (-\lambda_1 -2 \lambda_3-\lambda_5 -2 \lambda_6 )+\frac{c}{6}} \bigr)
\\
&&+\varepsilon^9\bigl(q^{\frac{1}{3} (-\lambda_1-3 \lambda_2 -2 \lambda_3 -3 \lambda_4-\lambda_5 +\lambda_6) +\frac{c}{4}}
  + q^{\frac{1}{3} (-\lambda_1 -2 \lambda_3-3 \lambda_4-\lambda_5 -2 \lambda_6)  +\frac{c}{4} } \bigr)
\\
&&+\varepsilon^8
\bigl(q^{ \frac{1}{3} (2 \lambda_1+3 \lambda_2 +4 \lambda_3+6 \lambda_4+5 \lambda_5 +4 \lambda_6)  -\frac{2c}{3}}+ \\
&& \qquad   q^{\frac{1}{3} (-\lambda_1-3 \lambda_2 -2 \lambda_3 -3 \lambda_4-\lambda_5 -2 \lambda_6)+\frac{c}{3} }
     +q^{\frac{1}{3} (-\lambda_1-2 \lambda_3 -3 \lambda_4 - 4 \lambda_5 -2 \lambda_6)+\frac{c}{3} }\bigr)
\\
&& +\varepsilon^7
(q^{\frac{1}{3} (2 \lambda_1+3 \lambda_2 +4 \lambda_3+6 \lambda_4+5 \lambda_5  +\lambda_6) -\frac{7c}{12}}
  +q^{\frac{1}{3} (-\lambda_1-3 \lambda_2 -2 \lambda_3 -3 \lambda_4-4 \lambda_5 -2 \lambda_6 )+\frac{5c}{12}})
\\
&&+\varepsilon^6
(q^{\frac{1}{3} (2 \lambda_1+3 \lambda_2 +4 \lambda_3+6 \lambda_4 +2 \lambda_5 +\lambda_6) -\frac{c}{2}}
   + q^{\frac{1}{3} (-\lambda_1-3 \lambda_2 -2 \lambda_3 -6 \lambda_4-4 \lambda_5 -2 \lambda_6) +\frac{c}{2} })
\\
&&+\varepsilon^5
(q^{\frac{1}{3} (-\lambda_1-3 \lambda_2 -5 \lambda_3 -6 \lambda_4-4 \lambda_5 -2 \lambda_6) +\frac{7c}{12}}
  +q^{\frac{1}{3} (2\lambda_1 + 3 \lambda_2 +4 \lambda_3 +3 \lambda_4+2 \lambda_5 +\lambda_6) - \frac{5c}{12} })
\\
&&+\varepsilon^4
(q^{\frac{1}{3} (-4 \lambda_1 -3 \lambda_2 -5 \lambda_3-6 \lambda_4-4 \lambda_5 -2 \lambda_6) +\frac{2c}{3}}
  +\\
 &&\qquad q^{\frac{1}{3} (2\lambda_1 + 4 \lambda_3 +3 \lambda_4+2 \lambda_5 +\lambda_6) -\frac{c}{3}}
   +q^{\frac{1}{3} (2 \lambda_1+3 \lambda_2 +\lambda_3 +3 \lambda_4+2 \lambda_5 +\lambda_6 )-\frac{c}{3} })
\\
&&+\varepsilon^3
(q^{\frac{1}{3} (2 \lambda_1+\lambda_3 +3 \lambda_4+2 \lambda_5 +\lambda_6) -\frac{c}{4} }
   + q^{\frac{1}{3} (-\lambda_1+3 \lambda_2 +\lambda_3 +3 \lambda_4+2 \lambda_5 +\lambda_6 )-\frac{c}{4}})
\\
&&+\varepsilon^2
(q^{\frac{1}{3} (-\lambda_1+\lambda_3 +3 \lambda_4+2 \lambda_5 +\lambda_6) -\frac{c}{6}}
  + q^{\frac{1}{3} (2 \lambda_1+\lambda_3 +2 \lambda_5 +\lambda_6) -\frac{c}{6} })
\\
&&+\varepsilon
(q^{\frac{1}{3} (2 \lambda_1 +\lambda_3-\lambda_5 +\lambda_6) -\frac{c}{12}}
  +q^{\frac{1}{3} (-\lambda_1+\lambda_3 +2 \lambda_5 +\lambda_6) -\frac{c}{12}})
\\
&&+
q^{\frac{1}{3} (-\lambda_1 +\lambda_3-\lambda_5 +\lambda_6) }
+
q^{\frac{1}{3} (-\lambda_1 -2 \lambda_3+2 \lambda_5 +\lambda_6) }
+
q^{\frac{1}{3} (2 \lambda_1+\lambda_3-\lambda_5 -2 \lambda_6 ) },
\end{eqnarray*}
with $\varepsilon^4=\varepsilon^2-1$, $\varepsilon^5=\varepsilon^3-\varepsilon$, $\varepsilon^6=-1$, $\varepsilon^7=-\varepsilon$, $\varepsilon^8=-\varepsilon^2$, $\varepsilon^9=-\varepsilon^3$, $\varepsilon^{10}=1-\varepsilon^2$, and $\varepsilon^{11}=\varepsilon-\varepsilon^3$.
Differentiation at $q=1$ of the coefficients of $\varepsilon^0$, $\varepsilon^1$, $\varepsilon^2$ and $\varepsilon^3$
on both sides of Eq. \eqref{Ev-id} produces the following four linearly independent vectors $v\in Q^\vee$:
$(1, 0, 2, 1, 0, 0)$, $(1, 0, 0, 0, 0, -1)$,  $(1, 0, 2, 2, 2, 1)$ and $(2, 2, 2, 3, 2, 1)$, respectively.
A similar computation for $\omega=\omega_2=\varphi$ complements these with two more linearly independent vectors $v$: $(0, 1, 1, 1, 1, 0)$ and $(0, 1, 1, 3, 1, 0)$, stemming from the coefficients of $\varepsilon^0$ and $\varepsilon^3$.

For $R$ of type $E_7$, one has that $\tilde{h}=h=18$, so
$t_\vartheta=\varepsilon q^{-c/18}$  with $\varepsilon=e^{2\pi i/18}$, and the corresponding cyclotomic polynomial
is $\Phi_{18}(\varepsilon )=\varepsilon^6-\varepsilon^3+1$. We consider $\hat{m}_\omega (\rho_{\text{g}}+\lambda)$ with $\omega$ being equal either to the minuscule weight $\omega_7$ or to the quasi-minuscule weight $\omega_1=\varphi$. In the minuscule case we divide out  an overall factor
$\varepsilon^{1/2}q^{-c/(2h)}$ from Eq. \eqref{Ev-id} before proceeding. The relevant linearly independent vectors $v\in Q^\vee$ are:
$(2, 2, 3, 4, 3, 2, 2)$ ($\varepsilon^0$-term),
$(1, 0, 0, 0, 0, -1, 0)$ ($\varepsilon^1$-term) and
$(0, 1, 0, 2, 3, 2, 1)$ ($\varepsilon^5$-term)  for $\omega=\omega_7$, and
$(1, 1, 2, 2, 2, 1, 0)$  ($\varepsilon^0$-term),  $(1, 0, 1, 2, 1, 1, 0)$ ($\varepsilon^1$-term)
and $(1, 2, 2, 4, 2, 1, 0)$ ($\varepsilon^4$-term)
for $\omega=\omega_1$.

For $R$ of type $E_8$, one has that $\tilde{h}=h=30$, so
$t_\vartheta=\varepsilon q^{-c/30}$  with $\varepsilon=e^{2\pi i/30}$, and the corresponding cyclotomic polynomial
is $\Phi_{30}(\varepsilon )=\varepsilon^8+\varepsilon^7-\varepsilon^5-\varepsilon^4-\varepsilon^3+\varepsilon+1$. We consider $\hat{m}_\omega (\rho_{\text{g}}+\lambda)$ with $\omega$ being equal either  to the quasi-minuscule weight $\omega_8=\varphi$ or to the only other small weight $\omega_1$. The relevant linearly independent vectors $v\in Q^\vee$ are for $\omega=\omega_8$:
$(1, 1, 4, 5, 4, 2, 1, 1)$  ($\varepsilon^0$-term),
$(2, 3, 6, 7, 5, 4, 2, 1)$  ($\varepsilon^1$-term),
$(2, 3, 2, 4, 3, 2, 2, 0)$  ($\varepsilon^2$-term) and
$(0, 0, 2, 2, 1, 0, 1, 0)$  ($\varepsilon^3$-term), and for $\omega=\omega_1$:
$(7, 7, 30, 39, 29, 14, 6, 7)$   ($\varepsilon^0$-term),
 $(14, 21, 44, 51, 35, 28, 12, 5)$  ($\varepsilon^1$-term),
$(16, 24, 16, 30, 23, 14, 15, 0)$   ($\varepsilon^2$-term) and
$(-2, 0, 14, 15, 6, 2, 6, -1)$  ($\varepsilon^3$-term).

\subsection{Type $\boldsymbol{E_7}$ revisited}
In the case that $R$ is of type $E_7$, it follows from the previous considerations that the equality in Eq. \eqref{Ev-id} can hold only if $\lambda-\mu$ is an integral multiple of the weight
\begin{equation}\label{ortho-complement}
\nu=2\omega_1+ 2\omega_2-\omega_3-\omega_4-\omega_5+2\omega_6 -\omega_7=\alpha_1+\alpha_2+\alpha_6
\end{equation}
(which spans the orthogonal complement of the hyperplane spanned by the above vectors $v\in Q^\vee$ for this case).
Substituting $\mu=\lambda +k \nu$ ($k\in\mathbb{Z}$) and application of the operator $(q\frac{ \text{d}}{\text{d}q})^2$ to the coefficients on both sides  of the equality entails a system of quadratic relations in $\lambda$ and $k$ (upon evaluation at $q=1$).
In each of these relations the LHS cancels against the quadratic terms in $\lambda$ on the RHS (viz. the $k^0$-terms) and ---more surprisingly---the quadratic terms in $k$ on the RHS also turn out to cancel against each other. From the remaining linear terms in $k$ we then deduce that the
equality in Eq. \eqref{Ev-id} implies that either $k=0$ or that $\lambda$ must satisfy a nonhomogenous system of five
linearly independent equations:
$2\lambda_1+ 2\lambda_2+ 3\lambda_3+4\lambda_4+ 3\lambda_5+ 2\lambda_6+ 2\lambda_7=c$ ($\varepsilon^0$-term),
$\lambda_1-\lambda_6=0$ ($\varepsilon^1$-term) for $\omega=\omega_7$, and
 $\lambda_1+\lambda_2+2\lambda_3+ 2\lambda_4+ 2\lambda_5+\lambda_6= c/2$ ($\varepsilon^0$-term),
  $\lambda_1+\lambda_3+ 2\lambda_4+ \lambda_5+ \lambda_6=c/3$ ($\varepsilon^1$-term) and
  $2\lambda_2+\lambda_3+ 2\lambda_4+ \lambda_5 =c/3$ ($\varepsilon^5$-term) for $\omega=\omega_1$.
The intersection of its two-dimensional plane of solutions with the convex hull of $P_c$ is given by the triangle
\begin{equation}\label{triangle}
\lambda^{(0)} -s\nu -r\eta,\quad  |r|\leq s\leq \frac{c}{12},
\end{equation}
where
$\lambda^{(0)} :=\frac{c}{6}(\omega_1+\omega_2+\omega_6)$, $\nu$ is given by Eq. \eqref{ortho-complement}, and $\eta:=\omega_3-\omega_5$.
Our condition that $c$ not be an integral multiple of $6$ when $R$ is of type $E_7$ guarantees that the intersection of the triangle with $P_c$ is empty, i.e. the equality in Eq. \eqref{Ev-id} can only hold if $k=0$ (so $\lambda=\mu$).

\begin{remark}\label{E7-degenerations} When $c$ is a multiple of $6$ the intersection of the triangle \eqref{triangle} with $P_c$ is given by weights of the form
$\lambda^{(0)} -k\nu -l\eta$ with $k,l\in\mathbb{Z}$ such that $ |l|\leq k\leq [\frac{c}{12}]$.
For instance, for $c=6$ the intersection consists only of $\lambda^{(0)}$ (so degenerations
are not possible in this case) whereas for proper multiples of $6$ a pair of weights $\lambda$ and $\mu$ in the triangle
corresponding to the same value for $l$ and different values for $k$ may lead to equal expressions on both sides of Eq. \eqref{Ev-id} for all $\omega\in\hat{P}$ small. Explicit computations for a few multiples of $6$ suggest that for fixed $l$ and any $\omega\in\hat{P}$ small, the expression for $\hat{m}_\omega (\rho_{\text{g}}+\lambda^{(0)} -k\nu -l\eta )$ is in fact independent of $k=|l|,\ldots ,[\frac{c}{12}]$.
Such degenerations only occur for weights near the affine wall of $P_c$. Indeed, since $\langle \lambda^{(0)} -s\nu -r\eta,\varphi^\vee\rangle\geq
\langle\lambda^{(0)}-\frac{c}{12}\nu,\varphi^\vee\rangle= \frac{11}{12}c$ for $|r|\leq s\leq \frac{c}{12}$, the degenerations in question are restricted to weights outside
$P_{\tilde{c}}\subseteq P_c$ with $\tilde{c}=\lceil\frac{11}{12}c\rceil$.
\end{remark}

\section*{Acknowledgments.}
The computations sustaining our case-by-case analysis to verify the nondegeneracy of the spectrum of the Macdonald operators with unitary parameters for the exceptional root systems benefitted much from
Stembridge's Maple packages {\tt COXETER} and
{\tt WEYL}.

\bibliographystyle{amsplain}

\end{document}